\theoremstyle{plain}
\newtheorem{thm}{Theorem}[section]
\newtheorem{lem}[thm]{Lemma}
\newtheorem{cor}[thm]{Corollary}
\newtheorem{defn-lem}[thm]{Definition-Lemma}
\newtheorem{prop}[thm]{Proposition}
\theoremstyle{definition}
\newtheorem{defn}[thm]{Definition}
\newtheorem{ex}[thm]{Example}
\newtheorem{rem}[thm]{Remark}
\newcommand{\Z}{\mathbb{Z}}
\newcommand{\C}{\mathbb{C}}
\def\md #1#2#3#4#5 {\left(
                        \begin{matrix}
             #1 & #2 \\
             #3 & #4
                        \end{matrix}
                      \right)- #5}
\def\ma #1#2#3#4 {\left(
                        \begin{matrix}
             #1 & #2 \\
             #3 & #4
                        \end{matrix}
                      \right)}
\def\End{\operatorname{End}}
\def\Tr{\operatorname{Tr}}
\def\Span{\operatorname{span}}
\newcommand{\mc}{\mathcal}
\newcommand{\lb}{\langle}
\newcommand{\rb}{\rangle}
\newcommand{\mb}{\mathbb}
\newcommand{\mf}{\mathbf}
\newcommand{\cc}{\circ}
\begin{document}
\title [Fourier transform, Schr\"{o}dinger representation, and Heisenberg modules]
       {Fourier transform, Schr\"{o}dinger representation, and Heisenberg modules}

\begin{abstract}
We investigate and review how Fourier transform is involved in the analysis of a twisted group algebra $L^1(G, \sigma)$ for $G=\widehat{\Gamma}\times \Gamma$ and $\sigma:G\times G \to \mb{T}$ 2- cocycle where $\Gamma$ is a locally compact abelian group and $\widehat{\Gamma}$ its Pontryagin dual. By weaving the Schr\"{o}dinger representation and Fourier transform, we construct the dual equivalence bimodule of the Heisenberg bimodule generated by the dual Schr\"{o}dinger representation and observe several relations between them including the application of noncommutative solitons.    

\end{abstract}

\author{Hyun Ho \, Lee}

\address {Department of Mathematics\\
          University of Ulsan\\
         Ulsan, South Korea 44610 }
\email{hadamard@ulsan.ac.kr}

\keywords{Schr\"{o}dinger representation, Fourier transform,  Noncommutative tori, Heisenberg modules, Noncommutative solitons}

\subjclass[2000]{Primary:58B20, 35C08. Secondary:58B16, 58J05, 42B35}
\date{}
\thanks{This research was supported by Basic Science Research Program through the National Research Foundation of Korea(NRF) funded by the Ministry of Education(NRF-2018R1D1A1A01057489).}
\maketitle
\setcounter{section}{-1}
\section{Introduction}
 
The noncommutative tori, which are most accessible and well understood, has been central in both $C\sp*$-algebra theory and noncommutative geometry.  Though it is simply defined as the universal $C\sp*$-algebra of $n$ unitary operators which  are not necessarily commutative but their commutators are fixed scalar multiples of the identity operator, the analysis on noncommutative tori depends on how we customize them among several equivalent pictures like crossed product $C\sp*$-algebra or deformation quantization of ordinary $n$-torus. In this article, we use a lattice picture originally taken by Rieffel in \cite{R:Morita}, but recently by several others in \cite{L:Gabor, L:VBoverNT, DLL:Sigma, DJLL:Sigma} which is better suited for noncommutative geometry. \\ 

The differentiable structure on them are defined by a natural ergodic action of $\mb{T}^n$ as a group of automorphisms and  projective modules or vector bundles over noncommutative tori  are constructed through Rieffel's Morita equivalence \cite{R:Morita} both of which are essential in noncommutative geometry \cite{ CO:NCG, CR:YM}. We note that the construction of projective modules over a noncommutative torus relies on a more general approach using the Schr\"{o}dinger representation on a phase space of the form $\Gamma \times \widehat{\Gamma}$ where $\Gamma$ is a locally compact abelian group and $\widehat{\Gamma}$ its Pontryagin dual. In fact, when we pass from the Schr\"{o}dinger representation of  phase space to that of lattices in phase space, the same equivalence bimodule over the algebra of observables in phase space and $\C$ serves as an equivalence modules over the algebra of observables in a lattice space and the algebra of observables in the dual lattice.\\         

On the other hand, the Schr\"{o}dinger representation on a latiice of the phase space $\Gamma\times \widehat{\Gamma}$ has been used in a time-frequency analysis, so called Gabor analysis. Let us briefly explain one of  central themes of Gabor analysis: Let $\pi$  be the Schr\"{o}dinger representation and $\Lambda$ a latiice.  They look for a generator $\xi$ in a suitable space such that the system $\{ \pi(\lambda)\xi \mid \lambda \in \Lambda\}$ becomes a frame for $L^2(\Gamma)$ or a pair $(\xi, \eta)$ such that the analysis operator using the system $\{ \pi(\lambda)\xi \mid \lambda \in \Lambda\}$ and the synthesis operator using the system  $\{ \pi(\lambda)\eta \mid \lambda \in \Lambda\}$ provides the reconstruction of a signal.\\  

Therefore through the Schr\"{o}dinger representation  Morita equivalence for noncommutative tori is linked to Gabor analysis on irrational parameter lattices.  Their essential interplay is expressed and captured by the fact that the associativity condition between two hermitian products in Morita equivalence is the fundamental identity in Gabor analysis \cite{L:Gabor, L:VBoverNT}. Besides this parallel result  it is remarkable that a key property from Gabor analysis, namely a duality principle, provides a relation between two hermitian products whose usefulness is demonstrated by the fact that Gabor frames are noncommuative solitons \cite{DLL:Sigma, Lee2:Sigma}.\\ 

Originally motivated by the question whether the Fourier transform of solitons over noncommutative tori are  solitons again, we determine the (Fourier) dual  Schr\"{o}dinger representation which means  an equivalent representation of the Schr\"{o}dinger representation under the Fourier transform and see relations of them in the central extension in Section \ref{S:Schrodinger}. Then we construct a (Fourier) dual of Moyal plane in the continuous case  and one of noncommutative torus in the discrete case, thus establish  Mortia equivalence bimodules over them using the dual representation in Section \ref{S:Heisenberg}.\\

Then using the (Fourier) dual constructions we investigate how solitons over Moyal plane and noncommutative torus behave under Fourier transform in Section \ref{S:solitons}.  To encompass higher rank projective modules, we keep our approach rather abstract, thereby we assume that the algebras are smooth as domains of derivations and the equivalence bimodule over those algebras are endowed with covariant derivatives compatible with derivations. There are faithful traces naturally defined  and when necessary we shall assume that  traces are invariant under the $\mathbb{T}^2-$action.  There are important quantities related to the traces and solitons and we observe how such quantities  are changed under Fourier transform.

\section{Sch\"{o}dinger representation and its dual representation }\label{S:Schrodinger}

Let $G$ be a group of the form $ \widehat{\Gamma} \times \Gamma$ where $\Gamma$ is a second countable locally compact abelian group and $\widehat{\Gamma}$ its Pontryagin dual.  Throughout the paper, we fix a Haar measure on $\Gamma$ and choose a Haar measure on $\widehat{\Gamma}$ so that  the Plancherel theorem holds; we define the Fourier transform of $f\in L^1(G)$ by 
\[ \mc{F}(f)(\gamma)= \widehat{f}(\gamma)=\int_{\Gamma} f(s)\overline{\gamma(s)}ds  \quad \text{for $\gamma \in \widehat{\Gamma}$}.\] Then $f$ can be recovered from $\widehat{f}$ by the inverse Fourier transform 
\[  f(s)=\mc{F}^{-1}(\widehat{f})(s)=\int_{\widehat{\Gamma}}\widehat{f}(\gamma)\gamma(s) d\gamma. \] 

We also denote $\breve{f}:=\mc{F}^{-1}(f) \in L^1(\Gamma)$ for $f \in C_0(\widehat{\Gamma})$.  Let us recall the definition of $2$-cocycle on a topological group $G$. 
\begin{defn}
We say a map $\sigma:G\times G \to \mb{T}$ is a 2-cocycle if the following hold;  for any $s,t,u \in G$
\begin{enumerate}
\item $\sigma(s,t)\sigma(st, u)=\sigma(s,tu)\sigma(t,u)$,
\item $\sigma(e,s)=\sigma(t,e)=1$
\end{enumerate}
where $e$ is the identity of $G$.
\end{defn}
On $G=\widehat{\Gamma}\times \Gamma$, there is a canonical 2-cocycle $\sigma :G\times G \to \mb{T}$ defined by $\sigma((\gamma_1, t_1), (\gamma_2, t_2))= \overline{\gamma_2(t_1)}$. Moreover, there is a canonical square integrable  $\sigma$-projective representation $\pi$ on $L^2(\Gamma)$, which is given by 
\[  \pi(\gamma, t)f(s)=\gamma(s)f(st^{-1}).\]

Indeed, for $f \in L^2(\Gamma)$
\[\begin{split}
\pi(\gamma_1, t_1) \pi(\gamma_2, t_2) f (s)&=\gamma_1(s)(\pi(\gamma_2, t_2) f)(st_1^{-1})\\
&=\gamma_1(s)\gamma_2(st_1^{-1})f(st_1^{-1}t_2^{-1})\\
&=\gamma_2(t_1^{-1}) (\gamma_1\cdot\gamma_2 )(s)f(s(t_1t_2)^{-1})\\
&=\sigma((\gamma_1, t_1), (\gamma_2, t_2))\pi((\gamma_1, t_1)\cdot(\gamma_2, t_2)) f(s).
\end{split}\]

Viewing $G$ as a phase space, $\pi$ is called \emph{the Heisenberg representation} or  \emph{the Sch\"{o}dinger representation} and in the case $\Gamma=\mb{R}$ it is called the time-frequency shift in signal analysis. We emphasize that $\pi$ is a composition of modulation operator and translation operator on $L^2(\Gamma)$ and the order is important; if we define  
\[ T_t: \xi(s) \to \xi(st^{-1}) \]
\[ M_{\gamma}: \xi(s) \to \gamma(s)\xi(s),\] then 
\begin{equation}
\pi(\gamma, t)=M_{\gamma}T_t
\end{equation}
In other words, $\pi(g)$ means that translation first and modulation second corresponding to $(\gamma, t)$.
It follows from the definition of $\pi$ that 
\begin{equation}
\pi(g_1)\pi(g_2)=\sigma(g_1, g_2)\overline{\sigma(g_2, g_1)}\pi(g_2)\pi(g_1) \quad \text{for} \,g_1,g_2 \in G
\end{equation}
Thus it is natural to define an anti-symmetrized 2-cocycle $\sigma_{symp}(g_1,g_2):=\sigma(g_1, g_2)\overline{\sigma(g_2, g_1)}.$ For us, it is worth to single out the second part of $\sigma_{symp}$. 
\begin{prop}
Let us define $\sigma^*(g_1, g_2):=\overline{\sigma(g_2, g_1)}$ for $G=\widehat{\Gamma}\times \Gamma$. Then it is a 2-cocycle again. 
\end{prop}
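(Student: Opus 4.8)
The plan is to reduce the statement to two elementary observations: complex conjugation preserves the $2$-cocycle identity because $\mathbb{T}$ is abelian, and reversing the order of the two arguments of a $2$-cocycle preserves the identity because $G=\widehat{\Gamma}\times\Gamma$ is abelian. Writing $\sigma^*(g_1,g_2)=\overline{\sigma^{\mathrm{op}}(g_1,g_2)}$ with $\sigma^{\mathrm{op}}(g_1,g_2):=\sigma(g_2,g_1)$, it then suffices to check that each of these two operations sends a $2$-cocycle to a $2$-cocycle.

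For the conjugation step I would note that $z\mapsto\overline z$ is a continuous automorphism of the group $\mathbb{T}$; applying it to both sides of $\sigma(s,t)\sigma(st,u)=\sigma(s,tu)\sigma(t,u)$ and to $\sigma(e,s)=\sigma(t,e)=1$ shows immediately that $\overline\sigma$ is again a $2$-cocycle. For the opposite step, the idea is to start from the cocycle identity for $\sigma$ with the arguments $(s,t,u)$ replaced by $(u,t,s)$, namely $\sigma(u,t)\sigma(ut,s)=\sigma(u,ts)\sigma(t,s)$, and then use that $G$ is abelian to rewrite $ut=tu$ and $ts=st$. Rearranging the resulting relation $\sigma(u,t)\sigma(tu,s)=\sigma(u,st)\sigma(t,s)$, which is permissible since $\mathbb{T}$ is commutative, gives exactly $\sigma^{\mathrm{op}}(s,t)\sigma^{\mathrm{op}}(st,u)=\sigma^{\mathrm{op}}(s,tu)\sigma^{\mathrm{op}}(t,u)$, while the normalization conditions for $\sigma^{\mathrm{op}}$ are immediate from those for $\sigma$. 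Combining the two steps, $\sigma^*=\overline{\sigma^{\mathrm{op}}}$ is a $2$-cocycle.

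Alternatively, and perhaps more transparently, one can verify the two defining conditions for $\sigma^*$ directly: condition (2) is trivial, and after taking complex conjugates condition (1) unravels to $\sigma(t,s)\sigma(u,st)=\sigma(tu,s)\sigma(u,t)$, which is precisely the index-permuted cocycle identity for $\sigma$ written above. I do not anticipate a genuine obstacle here; the only point requiring care is that reversing the arguments of a $2$-cocycle is \emph{not} legitimate for a general group, so the commutativity of $G=\widehat\Gamma\times\Gamma$ must actually be invoked at the step where $ut$ and $ts$ are replaced by $tu$ and $st$.
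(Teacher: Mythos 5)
Your proof is correct, and in fact the paper states this proposition without any proof at all (it is treated as a routine verification), so there is no argument of the author's to compare against. Your reduction $\sigma^*=\overline{\sigma^{\mathrm{op}}}$ is sound: conjugation preserves the cocycle identity because $z\mapsto\overline z$ is an automorphism of $\mb{T}$, and your index-permuted identity $\sigma(t,s)\sigma(u,st)=\sigma(tu,s)\sigma(u,t)$ is exactly the original cocycle identity evaluated at $(u,t,s)$ after using $ut=tu$, $ts=st$, so the commutativity of $G=\widehat{\Gamma}\times\Gamma$ enters precisely where you say it does; the normalization conditions are immediate. Your argument is actually more general than what the paper needs, since it shows that $(g_1,g_2)\mapsto\overline{\sigma(g_2,g_1)}$ is a $2$-cocycle for \emph{any} $2$-cocycle $\sigma$ on \emph{any} abelian group. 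For the specific $\sigma$ of the paper one could argue even more quickly: since $\sigma((\gamma_1,t_1),(\gamma_2,t_2))=\overline{\gamma_2(t_1)}$ is a bicharacter (multiplicative in each variable separately), so is $\sigma^*((\gamma_1,t_1),(\gamma_2,t_2))=\gamma_1(t_2)$, and every bicharacter satisfies both cocycle conditions by a one-line computation. Either route is acceptable; yours has the advantage of not depending on the explicit formula for $\sigma$, and you correctly flag that argument-reversal fails for noncommutative $G$, which is the only genuinely delicate point.
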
 
Now for $g=(\gamma, t)$ \[\begin{split}
\lb \pi(g)\xi, \eta \rb_{L^2(\Gamma)} &=\int_{\Gamma} \pi(\gamma, t)\xi(s)\overline{\eta(s)}\,ds \\
&=\int_{\Gamma}\gamma(s)\xi(st^{-1})\overline{\eta(s)}\,ds \\
&=\int_{\Gamma}\gamma(st)\xi(s)\overline{\eta(st)}\,ds\\
&=\int_{\Gamma}\xi(s)\gamma(t)\overline{\gamma^{-1}(s)}\overline{\eta(st)}\,ds \\
&=\int_{\Gamma}\xi(s) \overline{\sigma(g,g)} \overline{\pi(\gamma^{-1}, t^{-1})\eta(s)},ds\\
&=\lb \xi, \sigma(g,g)\pi(g^{-1})\eta \rb_{L^2(\Gamma)}
\end{split}\]
which implies that 
\begin{equation}\label{E:adjoint}
\pi(g)^*=\sigma(g,g)\pi(g^{-1})
\end{equation}

Then we define another projective representation of $G$ on $L^2(\Gamma)$ as follows; 
\[ \pi^{\circ}(\gamma, t):=T_t M_{\gamma}. \]
Because the order is changed, it is called a frequency-time shift in signal analysis.
\begin{prop}
$\pi^{\circ}$ is a square-integrable $\sigma^*$-projective representation.
\end{prop}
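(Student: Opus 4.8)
The plan is to reduce everything to the properties of $\pi$ already recorded above, exploiting that $\pi^\circ$ is nothing but $\pi$ twisted by a scalar phase. First I would write out the operator $\pi^\circ(\gamma,t)$ explicitly: for $f\in L^2(\Gamma)$,
\[ \pi^\circ(\gamma,t)f(s)=(T_tM_\gamma f)(s)=(M_\gamma f)(st^{-1})=\gamma(st^{-1})f(st^{-1})=\overline{\gamma(t)}\,\gamma(s)f(st^{-1}), \]
so that
\begin{equation}\label{E:scalarrel}
\pi^\circ(\gamma,t)=\overline{\gamma(t)}\,\pi(\gamma,t).
\end{equation}
Since $T_t$ is unitary on $L^2(\Gamma)$ by translation-invariance of the Haar measure and $M_\gamma$ is unitary because $\gamma$ is $\mathbb{T}$-valued, each $\pi^\circ(\gamma,t)$ is unitary; strong continuity of $(\gamma,t)\mapsto\pi^\circ(\gamma,t)$ follows from strong (joint) continuity of $T$ and $M$, and $\pi^\circ(e)=I$ is immediate. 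So only the multiplier law and square-integrability remain.

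For the multiplier law I would combine \eqref{E:scalarrel} with $\pi(g_1)\pi(g_2)=\sigma(g_1,g_2)\pi(g_1g_2)$ and the bi-multiplicativity of $(\gamma,t)\mapsto\gamma(t)$ on $\widehat{\Gamma}\times\Gamma$. Writing $g_i=(\gamma_i,t_i)$ and using $\pi(g_1g_2)=(\gamma_1\gamma_2)(t_1t_2)\,\pi^\circ(g_1g_2)$ from \eqref{E:scalarrel}, a short cancellation of characters together with $\sigma((\gamma_1,t_1),(\gamma_2,t_2))=\overline{\gamma_2(t_1)}$ gives
\[ \pi^\circ(g_1)\pi^\circ(g_2)=\overline{\gamma_1(t_1)}\,\overline{\gamma_2(t_2)}\,\sigma(g_1,g_2)\,(\gamma_1\gamma_2)(t_1t_2)\,\pi^\circ(g_1g_2)=\gamma_1(t_2)\,\pi^\circ(g_1g_2). \]
Since $\sigma^*(g_1,g_2)=\overline{\sigma(g_2,g_1)}=\overline{\,\overline{\gamma_1(t_2)}\,}=\gamma_1(t_2)$, this is exactly $\pi^\circ(g_1)\pi^\circ(g_2)=\sigma^*(g_1,g_2)\pi^\circ(g_1g_2)$; as a by-product the cocycle identity for $\sigma^*$ is re-verified, consistently with the earlier Proposition. (Alternatively one can obtain the factor $\gamma_1(t_2)$ by the direct computation of $\pi^\circ(\gamma_1,t_1)\pi^\circ(\gamma_2,t_2)f(s)$ in the same style as the displayed computation for $\pi$, which is equally short.)

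For square-integrability I would again invoke \eqref{E:scalarrel}: because $\pi^\circ(g)$ and $\pi(g)$ differ by a unimodular scalar, $|\langle\pi^\circ(g)\xi,\eta\rangle|=|\langle\pi(g)\xi,\eta\rangle|$ for all $\xi,\eta\in L^2(\Gamma)$ and all $g\in G$. Hence $g\mapsto\langle\pi^\circ(g)\xi,\eta\rangle$ lies in $L^2(G)$ for a given pair $\xi,\eta$ if and only if the corresponding matrix coefficient of $\pi$ does; in particular $\pi^\circ$ and $\pi$ have the same admissible vectors, and since $\pi$ is square-integrable, so is $\pi^\circ$.

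The computations here are routine. The only place that requires genuine care is the bookkeeping of characters when identifying the multiplier as $\sigma^*$ rather than $\sigma$ — getting the conjugations right and respecting the reversed argument order in $\sigma(g_2,g_1)$ — so that is the step I would slow down on; everything else (unitarity, strong continuity, and the transfer of square-integrability) is immediate from the scalar identity \eqref{E:scalarrel}.
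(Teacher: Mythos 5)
Your argument is essentially the paper's: both rest on the scalar identity $\pi^{\circ}(g)=\sigma(g,g)\pi(g)=\overline{\gamma(t)}\,\pi(g)$, push the product through the $\sigma$-multiplier law for $\pi$, and then identify the resulting character factor $\gamma_1(t_2)$ with $\sigma^*(g_1,g_2)$; your bookkeeping is correct. The only difference is cosmetic: you also spell out unitarity, continuity, and the transfer of square-integrability via $|\langle\pi^{\circ}(g)\xi,\eta\rangle|=|\langle\pi(g)\xi,\eta\rangle|$, points the paper leaves implicit, and that addition is sound.
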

\begin{proof}
We note that 
\begin{equation}
M_{\gamma}T_t=  \gamma(t)T_tM_{\gamma}.
\end{equation}
which means that 
\[ \pi(g)=\overline{\sigma(g,g)}\pi^{\circ}(g).\]
Hence for $g_1=(\gamma_1,t_1), g_2=(\gamma_2,t_2)$
\[ \begin{split}
\pi^{\circ}(g_1)\pi^{\circ}(g_2)&= \sigma(g_1,g_1)\pi(g_1)\sigma(g_2.g_2)\pi(g_2)\\
&=\sigma(g_1.g_1)\sigma(g_2,g_2)\sigma(g_1.g_2)\pi(g_1g_2)\\
&=\overline{\gamma_1(t_1)} \overline{\gamma_2(t_2)} \overline{\gamma_2(t_1)}\pi(g_1g_2)\\
&=\gamma_1(t_2) \overline{\gamma_1\gamma_2(t_1t_2)} \pi(g_1g_2)\\
&=\sigma^*(g_1,g_2)\sigma(g_1g_2, g_1g_2)\pi(g_1g_2)\\
&=\sigma^*(g_1,g_2)\pi^{\circ}(g_1g_2).
\end{split}\]
\end{proof}
We also note that 
\begin{equation}
\pi^{\circ}(g^{-1})=\pi(g)^*.
\end{equation} since $\sigma(g, g)=\sigma(g^{-1}, g^{-1})$.

In general, when we are given a 2-cocycle $\sigma$ on a topological group $G$ and a $\sigma$-projective representation $\pi$ on a complex Hilbert space we can lift $\pi$ of $G$ to a linear representation of $G\times_{\sigma} \mb{T}$ which is a central extension of $G$. Let us describe a group $G\times_{\sigma} \mb{T}$ by explaining the group law; 
\[(g_1,z_1) \cdot (g_2.z_2):=(g_1g_2,\sigma(g_1,g_2)z_1z_2).\]
It is a good exercise to check that the associativity holds since $\sigma$ is a 2-cocycle and the representation $\widetilde{\pi}(g,z):=\pi(g)S_z$, where $S_z$ is the scalar multiplication of $z$, is an ordinary representation of $G\times_{\sigma}\mb{T}$. We denote by $\widetilde{\pi}$ the representation of  $G\times_{\sigma} \mb{T}$ associated with a $\sigma$-projective representation $\pi$ of $G$.

Let $\widehat{G}=\Gamma \times \widehat{\Gamma}$ and define a $2$-cocycle $\sigma'((t_1,\gamma_1),(t_2,\gamma_2))=\overline{\gamma_1(t_2)}$. Using the Pontryagin duality $\widehat{\widehat{\Gamma}} \cong \Gamma$, we can view $t\in \Gamma$ as a character from $\widehat{\Gamma} \to \mb{T}$ so that $\pi(t,\gamma)=M_tT_{\gamma}$ and $\pi^{\circ}(t,\gamma) = T_{\gamma}M_t$ are  $\sigma$ and $(\sigma')^*$-projective representation of $\widehat{G}$ on $L^2(\widehat{\Gamma})$ respectively.  

It is interesting to see that Fourier transform $\mc{F}: L^2(\Gamma) \to L^2(\widehat{\Gamma})$ or $\wedge: L^2(\Gamma) \to L^2(\widehat{\Gamma})$ is an intertwining operator between two representations on $\Gamma\times \widehat{\Gamma} \times_{\sigma'}\mb{T}$ and $\widehat{\Gamma}\times \Gamma \times_{\sigma} \mb{T}$ respectively; more precisely  we define an isomorphism $r$ from $\widehat{\Gamma}\times \Gamma \times_{\sigma} \mb{T}$  to $\Gamma\times \widehat{\Gamma} \times_{\sigma'}\mb{T}$ as follows;
\[ r(\gamma,t, z)=(t^{-1}, \gamma, \gamma(t)z)\] 
We need to verify that $r$ is a homomorphism. 
\[\begin{split}
r((\gamma_1, t_1,z_1)\cdot(\gamma_2, t_2, z_2))&=r((\gamma_1\gamma_2, t_1t_2, \overline{\gamma_2(t_1)}z_1z_2))\\
&=(t_1^{-1}t_2^{-1}, \gamma_1\gamma_2, \gamma_1(t_1t_2)\gamma_2(t_1t_2)\overline{\gamma_2(t_1)}z_1z_2)\\
&=(t_1^{-1}t_2^{-1}, \gamma_1\gamma_2, \gamma_1(t_1t_2)\gamma_2(t_2)z_1z_2).\end{split}\]
On the other hand, 
\[\begin{split}
r(\gamma_1, t_1, z_1)\cdot r(\gamma_2, t_2, z_2)&=(t_1^{-1}, \gamma_1, \gamma_1(t_1)z_1)\cdot (t_2^{-1}, \gamma_2, \gamma_2(t_2)z_2)\\
&=(t_1^{-1}t_2^{-1}, \gamma_1\gamma_2, \sigma'((t_1^{-1}, \gamma_1), (t_2^{-1},\gamma_2))\gamma_1(t_1)\gamma_2(t_2)z_1z_2)\\
&=(t_1^{-1}t_2^{-1}, \gamma_1\gamma_2, \gamma_1(t_2)\gamma_1(t_1)\gamma_2(t_2)z_1z_2 )
\end{split}\]

\begin{prop}
Let $\iota: G \to \widehat{G}$ be a map defined by $\iota(\gamma, t)=(t^{-1}, \gamma)$.
$\pi^{\circ}\circ \iota: G \to B(L^2(\widehat{\Gamma}))$ is a $\sigma$-projective representation of $G$ and $\widetilde{\pi^{\circ}\circ \iota}= \widetilde{\pi}\circ r$, i.e., the lift of $\pi^{\circ}\circ \iota$ is $\widetilde{\pi}\circ r$. 
\end{prop}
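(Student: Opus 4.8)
The plan is to dispatch the two claims separately. For the first I would exploit that $\pi^{\circ}\circ\iota$ is just $\pi^{\circ}$ transported along the isomorphism $\iota$, reducing $\sigma$-projectivity to the single cocycle identity $(\sigma')^{*}(\iota(g_{1}),\iota(g_{2}))=\sigma(g_{1},g_{2})$; for the second I would observe that both $\widetilde{\pi^{\circ}\circ\iota}$ and $\widetilde{\pi}\circ r$ are honest representations of the \emph{same} central extension $\widehat{\Gamma}\times\Gamma\times_{\sigma}\mathbb{T}$ and then verify the resulting operator identity pointwise on $L^{2}(\widehat{\Gamma})$.

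\emph{First assertion.} The map $\iota$ is a topological group isomorphism $G\to\widehat{G}$ with inverse $(t,\gamma)\mapsto(\gamma,t^{-1})$ — bijectivity and multiplicativity use only that $\Gamma$ is abelian — and $\iota(e)=e$, so $(\pi^{\circ}\circ\iota)(e)=\pi^{\circ}(e)=I$. For $g_{i}=(\gamma_{i},t_{i})$, using that $\pi^{\circ}$ is $(\sigma')^{*}$-projective on $\widehat{G}$ together with the fact that $\iota$ is a homomorphism, one gets
\[ \pi^{\circ}(\iota(g_{1}))\pi^{\circ}(\iota(g_{2}))=(\sigma')^{*}(\iota(g_{1}),\iota(g_{2}))\,\pi^{\circ}(\iota(g_{1}g_{2})), \]
so it remains only to unwind $(\sigma')^{*}(\iota(g_{1}),\iota(g_{2}))=\overline{\sigma'((t_{2}^{-1},\gamma_{2}),(t_{1}^{-1},\gamma_{1}))}=\overline{\overline{\gamma_{2}(t_{1}^{-1})}}=\overline{\gamma_{2}(t_{1})}=\sigma(g_{1},g_{2})$. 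This shows $\pi^{\circ}\circ\iota$ is a $\sigma$-projective representation of $G$ on $L^{2}(\widehat{\Gamma})$, and by the lifting construction recalled above it has a lift $\widetilde{\pi^{\circ}\circ\iota}(g,z)=\pi^{\circ}(\iota(g))S_{z}$ to an ordinary representation of $\widehat{\Gamma}\times\Gamma\times_{\sigma}\mathbb{T}$.

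\emph{Second assertion.} Since $r$ is a homomorphism $\widehat{\Gamma}\times\Gamma\times_{\sigma}\mathbb{T}\to\Gamma\times\widehat{\Gamma}\times_{\sigma'}\mathbb{T}$ (already verified) and $\widetilde{\pi}$ is the ordinary representation of $\Gamma\times\widehat{\Gamma}\times_{\sigma'}\mathbb{T}$ lifting the $\sigma'$-projective time--frequency shift $(t,\gamma)\mapsto M_{t}T_{\gamma}$, the composite $\widetilde{\pi}\circ r$ is an ordinary representation of $\widehat{\Gamma}\times\Gamma\times_{\sigma}\mathbb{T}$, the same group carrying $\widetilde{\pi^{\circ}\circ\iota}$; hence it suffices to compare values. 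Evaluating at $(\gamma,t,z)$, the left-hand side equals $\pi^{\circ}(t^{-1},\gamma)S_{z}=T_{\gamma}M_{t^{-1}}S_{z}$, while the right-hand side equals $\widetilde{\pi}(r(\gamma,t,z))=\widetilde{\pi}(t^{-1},\gamma,\gamma(t)z)=M_{t^{-1}}T_{\gamma}S_{\gamma(t)z}$. These coincide by the commutation relation $M_{\mu}T_{s}=\mu(s)T_{s}M_{\mu}$ applied on $L^{2}(\widehat{\Gamma})$ with $s=\gamma\in\widehat{\Gamma}$, $\mu=t^{-1}\in\Gamma\cong\widehat{\widehat{\Gamma}}$, and $t(\gamma)=\gamma(t)$: it yields $T_{\gamma}M_{t^{-1}}=\gamma(t)\,M_{t^{-1}}T_{\gamma}$, whence $T_{\gamma}M_{t^{-1}}S_{z}=M_{t^{-1}}T_{\gamma}S_{\gamma(t)z}$.

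The routine parts are the two unwindings of the definitions of $\sigma$ and $\sigma'$ and their $\ast$-duals; the genuinely load-bearing point — and the thing I would watch most carefully — is the bookkeeping of the Pontryagin identifications and of which cocycle governs which representation, in particular that it is the frequency--time shift $\pi^{\circ}$ (which is $(\sigma')^{*}$-projective) precomposed with $\iota$ that matches the lift of the time--frequency shift $\pi$ precomposed with $r$, the scalar $\gamma(t)$ being absorbed exactly by the $\mathbb{T}$-twist built into $r$. No analytic difficulty arises, since throughout we are manipulating finite products of bounded unitary operators.
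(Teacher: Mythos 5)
Your proposal is correct and follows essentially the same route as the paper: the identity $\widetilde{\pi}\circ r=\widetilde{\pi^{\circ}\circ\iota}$ is checked by the same direct computation (the paper evaluates on $\xi(\delta)$, you phrase it at the operator level via $T_{\gamma}M_{t^{-1}}=\gamma(t)M_{t^{-1}}T_{\gamma}$, which is the identical cancellation of the scalar $\gamma(t)$ built into $r$). Your explicit verification that $(\sigma')^{*}(\iota(g_1),\iota(g_2))=\sigma(g_1,g_2)$, establishing the $\sigma$-projectivity of $\pi^{\circ}\circ\iota$, is a welcome addition that the paper's proof leaves implicit.
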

\begin{proof}
For $\xi \in L^2(\widehat{\Gamma})$
\[ \begin{split}
(\widetilde{\pi}\circ r) (\gamma,t, z) \xi (\delta)&=\widetilde{\pi}(t^{-1}, \gamma, \gamma(t)z) \xi (\delta)\\
&=\overline{\delta(t)}\gamma(t)z\xi(\delta\gamma^{-1})\\
&=z \delta\gamma^{-1}(t^{-1})\xi(\delta\gamma^{-1})\\
&=S_z\pi^{\circ}(t^{-1},\gamma)\xi(\delta)\\
&=S_z\pi^{\circ}\circ \iota (\gamma, t) \xi (\delta)
\end{split}
\]
\end{proof}
Combining the above result with the uniqueness of Heisenberg commutation relations and its extension to a $\sigma$-twisted group  the following is expected. 
\begin{thm}
The Fourier transform $\mc{F}$ is the intertwining unitary transformation between $\pi:G \to B(L^2(\Gamma))$ and $\pi^{\circ}\circ \iota:G \to B(L^2(\widehat{\Gamma}))$ and lifted between $\widetilde{\pi}$ and $\widetilde{\pi^{\circ}\circ \iota}$. The following diagram summarizes our statement.\\
\xymatrix{
& L^2(\Gamma) \ar@{-}[rr]^{\wedge} \ar@{=} '[d] [dd]
&& L^2(\widehat{\Gamma}) \ar@{=}[dd]\\ 
L^2(\Gamma) \ar@{-}[ur]^{\widetilde{\pi}}
\ar@{-}[rr]^{\wedge}\ar@{=}[dd] && L^2(\widehat{\Gamma})\ar@{-}[ur]^{\widetilde{\pi}\circ r}\ar@{=}[dd]\\
       & L^2(\Gamma)\ar@{-}'[r] [rr]^{\wedge} &&  L^2(\widehat{\Gamma})\\
L^2(\Gamma) \ar@{-}[rr]^{\wedge} \ar@{-}[ur] ^{\pi}
&& L^2(\widehat{\Gamma}) \ar@{-}[ur]^{\pi^{\circ} \iota}}\\
Figure. 1.

\end{thm}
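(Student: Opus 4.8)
The statement is the assertion that
\[ \mc{F}\circ\pi(\gamma,t)=(\pi^{\circ}\circ\iota)(\gamma,t)\circ\mc{F}\qquad\text{on }L^2(\Gamma) \]
for every $(\gamma,t)\in G$, together with the corresponding identity for the lifts to $G\times_\sigma\mb{T}$ and the compatibility with the isomorphism $r$ recorded in the previous Proposition. The plan is to first reduce the operator identity to a dense subspace: $\mc{F}\colon L^2(\Gamma)\to L^2(\widehat{\Gamma})$ is unitary by the Plancherel theorem and $\pi(\gamma,t),(\pi^{\circ}\circ\iota)(\gamma,t)$ are unitary, so both composites are bounded, and it is enough to check the identity on $L^1(\Gamma)\cap L^2(\Gamma)$ (which is preserved by $T_t$, $M_\gamma$, $T_\gamma$, $M_{t^{-1}}$), where $\mc{F}$ is given by the defining integral.

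On that subspace I would use the factorization $\pi(\gamma,t)=M_\gamma T_t$ together with the two elementary intertwining relations
\[ \mc{F}\circ T_t=M_{t^{-1}}\circ\mc{F},\qquad \mc{F}\circ M_\gamma=T_\gamma\circ\mc{F}, \]
where on the right $T_\gamma$ is translation by $\gamma\in\widehat{\Gamma}$ on $L^2(\widehat{\Gamma})$ and $M_{t^{-1}}$ is modulation on $L^2(\widehat{\Gamma})$ by the character $\delta\mapsto\delta(t^{-1})=\overline{\delta(t)}$ coming from $t^{-1}\in\Gamma$ under $\widehat{\widehat{\Gamma}}\cong\Gamma$. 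Each relation is a single change of variables: $\widehat{T_t\xi}(\delta)=\int_\Gamma\xi(st^{-1})\overline{\delta(s)}\,ds=\overline{\delta(t)}\,\widehat{\xi}(\delta)$ after $s\mapsto st$, and $\widehat{M_\gamma\xi}(\delta)=\int_\Gamma\gamma(s)\xi(s)\overline{\delta(s)}\,ds=\widehat{\xi}(\delta\gamma^{-1})$. Composing them gives $\mc{F}\pi(\gamma,t)=\mc{F}M_\gamma T_t=T_\gamma\mc{F}T_t=T_\gamma M_{t^{-1}}\mc{F}$, and by definition $T_\gamma M_{t^{-1}}=\pi^{\circ}(t^{-1},\gamma)=(\pi^{\circ}\circ\iota)(\gamma,t)$; this is exactly the claimed identity, which then extends to all of $L^2(\Gamma)$ by continuity. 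Alternatively one can run the computation in one pass, obtaining $\widehat{\pi(\gamma,t)\xi}(\delta)=\gamma(t)\overline{\delta(t)}\,\widehat{\xi}(\delta\gamma^{-1})$ and matching it against the formula for $(\pi^{\circ}\circ\iota)(\gamma,t)$ extracted from the preceding Proposition.

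For the lifted statement I would note that the scalar operator $S_z$ commutes with $\mc{F}$, so $\mc{F}\,\widetilde{\pi}(g,z)=\mc{F}\pi(g)S_z=(\pi^{\circ}\circ\iota)(g)S_z\mc{F}=\widetilde{\pi^{\circ}\circ\iota}(g,z)\,\mc{F}$; inserting $\widetilde{\pi^{\circ}\circ\iota}=\widetilde{\pi}\circ r$ from the previous Proposition then shows that every face of the cube in Figure 1 commutes. This is precisely what one anticipates on general grounds: $\pi$ and $\pi^{\circ}\circ\iota$ are both irreducible $\sigma$-projective representations of $G=\widehat{\Gamma}\times\Gamma$ (the second because it is unitarily equivalent to the first), so existence of a unitary intertwiner is guaranteed by the Stone--von Neumann uniqueness theorem and its extension to $\sigma$-twisted group algebras over locally compact abelian groups; the content here is that this canonical intertwiner is the Fourier transform.

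The computational part is routine, and I expect the only real points of care to be the Pontryagin double-dual bookkeeping --- keeping straight that $t\in\Gamma$ acts on $L^2(\widehat{\Gamma})$ through the character $\delta\mapsto\delta(t)$, so that the inverses and conjugates of the unimodular character values land in the right place --- and the technical step of first establishing the identity on a dense subspace where the integral formula for $\mc{F}$ is literally valid and only afterwards passing to $L^2(\Gamma)$ via Plancherel. No cocycle obstruction intervenes, since the preceding Proposition already establishes that $\pi^{\circ}\circ\iota$ carries the same $2$-cocycle $\sigma$ as $\pi$.
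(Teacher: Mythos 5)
Your proposal is correct and is essentially the paper's argument: both reduce to the same change-of-variables computation showing $\mc{F}(\pi(\gamma,t)\xi)(\delta)=\gamma(t)\overline{\delta(t)}\,\widehat{\xi}(\delta\gamma^{-1})=(\pi^{\circ}\circ\iota)(\gamma,t)\widehat{\xi}(\delta)$, the paper doing it in one pass (and verifying the lifted identity by the same integral with the scalar $z$ carried along), while you package it through the exchange relations $\mc{F}T_t=M_{t^{-1}}\mc{F}$ and $\mc{F}M_\gamma=T_\gamma\mc{F}$ and deduce the lift from the fact that $S_z$ commutes with $\mc{F}$ together with the preceding Proposition $\widetilde{\pi^{\circ}\circ\iota}=\widetilde{\pi}\circ r$. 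Your added care about first verifying the identity on a dense subspace where the integral formula for $\mc{F}$ is literally valid, then extending by Plancherel, is a harmless refinement the paper leaves implicit.
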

\begin{proof}
Let us show $\wedge \circ \widetilde{\pi}= \widetilde{\pi}\circ r \circ \wedge $ first; for $\xi \in L^2(\Gamma) $ and $(\gamma, t, z)\in \widehat{\Gamma}\times \Gamma\times \mb{T}$
\[\begin{split}
\mc{F}(\widetilde{\pi}(\gamma, t, z)\xi)(\delta)&=\int_{\Gamma}z \gamma(s)\xi(st^{-1})\overline{\delta(s)}\,ds \\
&= z\int_{\Gamma}\gamma(st)\xi(s)\overline{\delta(st)}\,ds \\
&=z\gamma(t)\overline{\delta(t)}\int_{\Gamma}\xi(s)\overline{\delta\gamma^{-1}(s)}ds\\
&=z\gamma(t)\overline{\delta(t)}\mc{F}(\xi)(\delta\gamma^{-1})\\
&=\widetilde{\pi}(t^{-1},\gamma, \gamma(t)z) \mc{F}(\xi)(\delta)\\
&=\widetilde{\pi}\circ r (\gamma, t, z) \mc{F}(\xi)(\delta). 
\end{split}
\]
Next, we show that 
\[\begin{split}
\pi^{\circ}\circ \iota (\gamma, t) \mc{F}(\xi)(\delta)&=\pi^{\circ}(t^{-1}, \gamma)\mc{F}(\xi)(\delta)\\
&=t^{-1}(\delta\gamma^{-1})\mc{F}(\xi)(\delta\gamma^{-1})\\
&=\delta\gamma^{-1}(t^{-1})\int_{\Gamma}\xi(s)\overline{\delta\gamma^{-1}(s)}\,ds\\
&= \delta\gamma^{-1}(t^{-1})\int_{\Gamma}\xi(st^{-1})\overline{\delta\gamma^{-1}(st^{-1})}\,ds\\
&=\int_{\Gamma}\xi(st^{-1})\overline{\delta\gamma^{-1}(s)}\,ds\\
&=\int_{\Gamma} \pi(\gamma, t)\xi(s)\overline{\delta(s)}\,ds\\
&=\mc{F}( \pi(\gamma,t) \xi ) (\delta).
\end{split}
\]
\end{proof}
From the above observation we shall call $\pi^{\circ}\circ \iota$ as the dual Schr\"{o}dinger representation of $G$. 
\begin{ex}\cite{Howe}
Let us consider $\Gamma=\mb{R}^n$. For $(\mf{y}, \mf{x}) \in G=\widehat{\Gamma} \times \Gamma$, the Schr\"{o}dinger representation is given by $\pi(\mf{y}, \mf{x})\xi(\mf{x}')=e^{2\pi i \lb\mf{y}, \mf{x}'\rb}\xi(\mf{x}'-\mf{x})$ for $\xi \in L^2(\mb{R}^n)$. In addition, if a 2- cocycle $\sigma:G\times G \to \mb{T}$ is defined as $\sigma((\mf{y_1}, \mf{x_1}),(\mf{y_2},\mf{x_2}))=e^{-2\pi i \lb \mf{x_1},\mf{y_2}\rb}$, then $\pi$ is a $\sigma$-projective representation.  The central extension of $G$ with respect to $\sigma$ or $G\times_{\sigma}\mb{T}$ is called a (reduced) Heisenberg group of order $n$ and the lifted representation of $\pi$ is $\widetilde{\pi}(\mf{y},\mf{x},z)= S_z \pi(\mf{y}, \mf{x})$ where $S_z$ is the scalar multiplcation by $z$ for $(\mf{y}, \mf{x}, z) \in G \times_{\sigma} \mb{T}$. In this case, the map $r(\mf{y}, \mf{x}, z)=(-\mf{x},\mf{y}, e^{2 \pi i \lb \mf{x}, \mf{y} \rb}z)$ is an automorphism of $G\times_{\sigma} \mb{T}$. Moreover, $\pi^{\circ}\circ \iota (\mf{y}, \mf{x}) \xi(\mf{x'})=\pi^{\circ}(-\mf{x}, \mf{y})\xi(\mf{x}')=e^{-2 \pi i \lb \mf{x}, \mf{x}'-\mf{y} \rb}\xi(\mf{x}'-\mf{y}).$ Then for $\xi \in L^2(\mb{R}^n)$ we have 
\[ \begin{split}
\mc{F}\circ \pi(\mf{y},\mf{x})(\xi)(\mf{y}')&= \int_{\mb{R}^n}  e^{2\pi i \lb\mf{y}, \mf{x}'\rb} e^{-2 \pi i \lb \mf{x}', \mf{y}' \rb} \xi(\mf{x}'-\mf{x})d\mf{x}' \\
&= \int_{\mb{R}^n} e^{2 \pi i \lb \mf{x}', \mf{y}-\mf{y}' \rb} \xi(\mf{x}'-\mf{x}) d\mf{x}'\\
&=\int_{\mb{R}^n} e^{-2 \pi i \lb \mf{x}'+\mf{x}, \mf{y}'-\mf{y} \rb} \xi(\mf{x}') d\mf{x}'\\
&= e^{-2 \pi i \lb \mf{x}, \mf{y}'-\mf{y} \rb} \int_{\mb{R}^n} e^{-2 \pi i \lb \mf{x}', \mf{y}'-\mf{y} \rb} \xi(\mf{x}') d\mf{x}'\\
&=e^{-2 \pi i \lb \mf{x}, \mf{y}'-\mf{y} \rb} \widehat{\xi}(\mf{y}'-\mf{y})\\
&=\pi^{\circ}(-\mf{x}, \mf{y})(\mc{F}(\xi))(\mf{y}').
\end{split}
\]
\end{ex}

Let us consider a twisted Banach algebra and its quantizations as a subalgebra of $B(H)$  where $H$ is an infinite dimensional Hilbert space. Following a common notation for an abelian group we now use addition instead of multiplication. Consider $L^1(G)$ for a locally compact abelian group, for for $a, b \in L^1(G)$ we define a twisted convolutions using $2$-cocycle $\sigma$ by 
\begin{equation}\label{E:twistedconv}
(a \natural b)(g)=\int_{G} a(g')b(g-g')\sigma(g',g-g') dg' 
\end{equation}
and twisted involution of $a \in L^1(G)$ as 
\begin{equation}\label{E:twistedinvo}
a^*(g)=\sigma(g,g)\overline{a(-g)}.
\end{equation}
The associated twisted group algebra is denoted by $L^1(G, \sigma)$. If there is a faithful $\sigma$-projective representation $\rho$, then we can quantize $L^1(G, \sigma)$ using the integral representation $A=\rho(a)=\int_{G} a(g)\rho(g) dg$.  The $C\sp*$-algebra generated by $\{\pi(a)\mid a \in L^1(G)\}$ is denoted by $C^*(G, \rho, \sigma) \subset B(H)$.  It is easily checked that 
\[\begin{aligned}
\rho(a)\rho(b)&=\rho(a \natural b),\\
(\rho(a))^*&= \rho(a^*).
\end{aligned}\]

Now consider the case $G=\widehat{\Gamma} \times \Gamma$.  We have two faithful representations $\pi(\gamma, t)=M_{\gamma}T_t$ on $L^2(\Gamma)$ and $\pi^{\circ}\iota (\gamma, t)=T_{\gamma}M_{-t}$ on $L^2(\widehat{\Gamma})$.  Thus we can think of  two different quantizations of $L^1(G, \sigma)$ as $C\sp*(G, \pi, \sigma) \subset B(L^2(\Gamma))$ and $C\sp*(G, \pi^{\circ}\iota, \sigma) \subset B(L^2(\widehat{\Gamma}))$. 
Let us compare the matrix coefficients of $\pi$ and $\pi^{\circ} \iota$ which are defined for $\xi, \eta \in L^2(\Gamma)$ by 
\[\lb \xi, \pi(g)\eta \rb=\int_{\Gamma} \xi(s)\bar {\eta}(s-t)\overline{\gamma(s)} ds, \]
for $\xi', \eta' \in L^2(\widehat{G})$ by
\[ \lb \xi', \pi^{\circ}\iota(g)\eta' \rb=\int_{\widehat{\Gamma}} \xi'(\delta)\bar {\eta'}(\delta-\gamma)\delta(t)\overline{\gamma(t)} d\delta\] where $g=(\gamma, t)$. 
\begin{prop}
For $\xi, \eta \in L^2(\Gamma)$ we have 
\[ \lb \xi, \pi(\gamma, t)\eta \rb =\lb \widehat{\xi}, \pi^{\circ}\iota(\gamma,t)\widehat{\eta}\rb. \]
\end{prop}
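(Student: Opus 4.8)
The plan is to deduce the identity directly from the intertwining theorem proved just above (the one summarized by Figure~1) together with the fact that the Fourier transform is a unitary isomorphism. By Plancherel's theorem---which holds precisely because the Haar measure on $\widehat{\Gamma}$ was normalized for that purpose at the start of the section---the map $\wedge\colon L^2(\Gamma)\to L^2(\widehat{\Gamma})$ is unitary and hence preserves inner products, and the theorem supplies the operator identity $\mc{F}\cc \pi(\gamma,t)=(\pi^{\cc}\iota)(\gamma,t)\cc \mc{F}$ from $L^2(\Gamma)$ to $L^2(\widehat{\Gamma})$.

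Then I would simply compute, for $\xi,\eta\in L^2(\Gamma)$,
\[
\lb \xi,\pi(\gamma,t)\eta\rb_{L^2(\Gamma)}=\lb \mc{F}\xi,\mc{F}\big(\pi(\gamma,t)\eta\big)\rb_{L^2(\widehat{\Gamma})}=\lb \widehat{\xi},(\pi^{\cc}\iota)(\gamma,t)\widehat{\eta}\rb_{L^2(\widehat{\Gamma})},
\]
where the first equality uses unitarity of $\wedge$ and the second uses the intertwining relation applied to $\eta$. That is the entire argument; with the theorem in hand there is essentially no obstacle.

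For a self-contained check one could instead expand both sides using the two displayed matrix-coefficient formulas immediately preceding the statement: substitute $\widehat{\xi}(\delta)=\int_\Gamma \xi(s)\overline{\delta(s)}\,ds$ and $\widehat{\eta}(\delta-\gamma)=\int_\Gamma \eta(s)\overline{(\delta\gamma^{-1})(s)}\,ds$ into $\lb\widehat{\xi},(\pi^{\cc}\iota)(\gamma,t)\widehat{\eta}\rb=\int_{\widehat{\Gamma}}\widehat{\xi}(\delta)\,\overline{\widehat{\eta}}(\delta-\gamma)\,\delta(t)\overline{\gamma(t)}\,d\delta$, perform the change of variables $s\mapsto s-t$ in the integral coming from $\widehat{\eta}$, collect the resulting character factors, and interchange the order of integration so that the integral over $\widehat{\Gamma}$ collapses by Parseval/Fourier inversion, leaving exactly $\int_\Gamma \xi(s)\bar{\eta}(s-t)\overline{\gamma(s)}\,ds$. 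In that route the one delicate point is keeping track of the character factors produced by the modulation-before-translation versus translation-before-modulation orderings---the same phenomenon behind the $\sigma$ versus $\sigma^{*}$ distinction noted earlier---and making sure the inner-product convention (conjugate-linear in the second slot, as in the displayed formulas) is applied consistently on both sides; but since the intertwining theorem already encodes this bookkeeping, the first proof is the one I would present.
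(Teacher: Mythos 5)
Your first argument is exactly the paper's proof: it invokes the intertwining theorem to rewrite $\pi^{\circ}\iota(\gamma,t)\widehat{\eta}$ as $\mc{F}(\pi(\gamma,t)\eta)$ and then uses the unitarity of the Fourier transform (Plancherel) to pull the inner product back to $L^2(\Gamma)$. The proposal is correct and takes essentially the same approach, with the direct matrix-coefficient computation offered only as an optional cross-check.
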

\begin{proof}
By Theorem, \[  \begin{split} 
\lb \widehat{\xi}, \pi^{\circ}\iota(\gamma,t)\widehat{\eta}\rb&= \lb \mc{F}{\xi}, \mc{F}(\pi(\gamma,t)\eta)\rb\\
&= \lb \xi, \pi(\gamma, t)\eta \rb 
\end{split}\]   
\end{proof}
In signal analysis $V_{\eta}\xi: (\gamma. t) \mapsto \lb \xi, \pi(\gamma, t)\eta \rb$ is called the short-time Fourier transform and $\eta$ plays a role of the window function. It is clear that $V_{\eta}\xi \in L^2(G)$ for $\xi, \eta \in L^2(\Gamma)$. Similarly, for $\xi', \eta' \in L^2(\widehat{\Gamma})$ we define $V^{\circ}_{\eta'}\xi': (\gamma, t) \to \lb\xi'. \pi^{\circ}\iota(\gamma, t)\eta' \rb$ as an element of $L^2(G)$ and call it the dual short-time Fourier transform.

\begin{lem}(Moyal identity)
For $\eta, \xi, \phi, \psi \in L^2(\Gamma)$ we have 
\[ \lb V_{\eta}\xi, V_{\psi}\phi \rb_{L^2(G)}=\lb \xi,\phi \rb \lb \psi, \eta \rb.\]
\end{lem}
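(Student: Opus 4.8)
The plan is to reduce the identity to two classical ingredients: the Plancherel theorem on $\widehat{\Gamma}$ (in the normalization fixed at the start of the section, so that $\mc{F}\colon L^2(\Gamma)\to L^2(\widehat{\Gamma})$ is unitary) and Fubini's theorem together with the translation invariance of the Haar measure on $\Gamma$. First I would unwind the definition: writing $g=(\gamma,t)$ and $T_t\eta(s)=\eta(s-t)$, the formula for the matrix coefficient recorded just before the lemma gives
\[ V_{\eta}\xi(\gamma,t)=\int_{\Gamma}\xi(s)\,\overline{\eta(s-t)}\,\overline{\gamma(s)}\,ds=\mc{F}\bigl(\xi\cdot\overline{T_t\eta}\bigr)(\gamma), \]
so that, for each fixed $t\in\Gamma$, the function $\gamma\mapsto V_{\eta}\xi(\gamma,t)$ is the Fourier transform of $F_t:=\xi\cdot\overline{T_t\eta}$, and likewise $\gamma\mapsto V_{\psi}\phi(\gamma,t)=\mc{F}(H_t)(\gamma)$ with $H_t:=\phi\cdot\overline{T_t\psi}$.

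Next, since $V_{\eta}\xi,V_{\psi}\phi\in L^2(G)$ their product is in $L^1(G)$ by Cauchy--Schwarz, so I may use Fubini on $G=\widehat{\Gamma}\times\Gamma$ to integrate over $\gamma$ first. Plancherel then yields, for each $t$,
\[ \int_{\widehat{\Gamma}}V_{\eta}\xi(\gamma,t)\,\overline{V_{\psi}\phi(\gamma,t)}\,d\gamma=\lb F_t,H_t\rb_{L^2(\Gamma)}=\int_{\Gamma}\xi(s)\overline{\phi(s)}\,\overline{\eta(s-t)}\psi(s-t)\,ds. \]
Integrating this over $t\in\Gamma$, applying Fubini once more to exchange the $s$- and $t$-integrals, and, for each fixed $s$, substituting $u=s-t$ (legitimate by invariance of the Haar measure under translation and inversion), the double integral factors:
\[ \lb V_{\eta}\xi,V_{\psi}\phi\rb_{L^2(G)}=\Bigl(\int_{\Gamma}\xi(s)\overline{\phi(s)}\,ds\Bigr)\Bigl(\int_{\Gamma}\psi(u)\overline{\eta(u)}\,du\Bigr)=\lb\xi,\phi\rb\,\lb\psi,\eta\rb, \]
which is the assertion.

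The main obstacle is purely the legitimacy of the interchanges of integration: every step above holds verbatim when $\xi,\eta,\phi,\psi$ lie in a dense subspace on which all integrands are absolutely integrable over the relevant product groups (for instance $C_c(\Gamma)$, where $F_t$ and $H_t$ are compactly supported and bounded, so Tonelli's theorem applies directly). To pass to arbitrary $\xi,\eta,\phi,\psi\in L^2(\Gamma)$ I would invoke that $(\xi,\eta)\mapsto V_{\eta}\xi$ is a bounded map into $L^2(G)$ (sesquilinear, hence separately continuous in its four slots): both sides of the claimed identity are then bounded forms in $(\xi,\phi,\eta,\psi)$ that agree on a dense set, so they agree everywhere. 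An alternative, essentially equivalent, route is to observe that the statement is exactly the orthogonality relation for the square-integrable $\sigma$-projective representation $\pi$; but the direct Plancherel computation above is the most transparent.
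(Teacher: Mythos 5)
Your argument is correct, and in fact the paper offers no proof of this lemma at all: it is stated as a known fact (the classical Moyal/orthogonality relation for the short-time Fourier transform), so your Plancherel-plus-Fubini computation supplies exactly the standard argument the paper implicitly relies on. The core steps are right: $V_{\eta}\xi(\gamma,t)=\mc{F}\bigl(\xi\cdot\overline{T_t\eta}\bigr)(\gamma)$ in the paper's normalization, Parseval in $\gamma$ for fixed $t$, then Fubini and the substitution $u=s-t$ to factor the double integral into $\lb\xi,\phi\rb\,\lb\psi,\eta\rb$.

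One point to tighten in the extension step: the phrase ``sesquilinear, hence separately continuous'' is not an implication, and the bound $\|V_{\eta}\xi\|_{L^2(G)}\le\|\xi\|\,\|\eta\|$ that you use to pass from $C_c(\Gamma)$ to $L^2(\Gamma)$ is precisely the $\phi=\xi$, $\psi=\eta$ case of the identity being proved, so quoting it without comment is mildly circular. Either invoke the square-integrability of $\pi$ (which the paper assumes when it declares $V_{\eta}\xi\in L^2(G)$ to be clear), or close the loop directly: for $\xi_n,\eta_n\in C_c(\Gamma)$ converging in $L^2$ to $\xi,\eta$, the identity already proved on $C_c(\Gamma)$ shows $V_{\eta_n}\xi_n$ is Cauchy in $L^2(G)$, while $V_{\eta_n}\xi_n\to V_{\eta}\xi$ pointwise (uniformly, by Cauchy--Schwarz), so the $L^2$-limit is $V_{\eta}\xi$ and both sides of the identity pass to the limit. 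With that adjustment the proof is complete; your closing remark that the statement is the orthogonality relation for the square-integrable $\sigma$-projective representation $\pi$ is also an acceptable one-line justification in the spirit of the paper.
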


\begin{prop}
For $\eta', \xi', \phi', \psi' \in L^2(\widehat{\Gamma})$ it holds that \[ \lb V^{\circ}_{\eta'}\xi', V^{\circ}_{\psi'}\phi' \rb_{L^2(G)}=\lb \xi',\phi' \rb \lb \psi', \eta' \rb.\]
\end{prop}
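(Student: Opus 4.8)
The plan is to deduce this directly from the ordinary Moyal identity (the preceding Lemma) by transporting every ingredient through the Fourier transform, using the intertwining relation $\lb \xi, \pi(\gamma, t)\eta \rb =\lb \widehat{\xi}, \pi^{\circ}\iota(\gamma,t)\widehat{\eta}\rb$ just established. By the Plancherel theorem, $\mc{F}\colon L^2(\Gamma)\to L^2(\widehat{\Gamma})$ is a surjective unitary, so given arbitrary $\xi',\eta',\phi',\psi'\in L^2(\widehat{\Gamma})$ we may write $\xi'=\widehat{\xi}$, $\eta'=\widehat{\eta}$, $\phi'=\widehat{\phi}$, $\psi'=\widehat{\psi}$ for uniquely determined $\xi,\eta,\phi,\psi\in L^2(\Gamma)$. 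This reduction is the only place the surjectivity of $\mc{F}$ onto $L^2(\widehat\Gamma)$ is needed.

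Next I would apply the intertwining Proposition pointwise in $(\gamma,t)\in G$: it gives $V^{\circ}_{\eta'}\xi'(\gamma,t)=\lb \widehat{\xi},\pi^{\circ}\iota(\gamma,t)\widehat{\eta}\rb=\lb \xi,\pi(\gamma,t)\eta\rb=V_{\eta}\xi(\gamma,t)$, hence $V^{\circ}_{\eta'}\xi'=V_{\eta}\xi$ as elements of $L^2(G)$ (the right-hand side lies in $L^2(G)$ by the remark following the Moyal identity, and likewise for the left-hand side by its definition), and similarly $V^{\circ}_{\psi'}\phi'=V_{\psi}\phi$. Then $\lb V^{\circ}_{\eta'}\xi', V^{\circ}_{\psi'}\phi' \rb_{L^2(G)}=\lb V_{\eta}\xi, V_{\psi}\phi \rb_{L^2(G)}=\lb \xi,\phi \rb \lb \psi, \eta \rb$ by the Moyal identity, and finally, since $\mc{F}$ preserves inner products, $\lb \xi,\phi \rb=\lb \widehat{\xi},\widehat{\phi} \rb=\lb \xi',\phi' \rb$ and $\lb \psi,\eta \rb=\lb \widehat{\psi},\widehat{\eta} \rb=\lb \psi',\eta' \rb$, which yields the asserted identity.

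I do not expect a genuine obstacle here: once the intertwining Proposition and the Moyal identity are available, the statement is essentially a change of variables under the unitary $\mc{F}$, and the only step deserving an explicit line is invoking Plancherel for the surjectivity of $\mc{F}$. For completeness one could instead argue by brute force, expanding $V^{\circ}_{\eta'}\xi'(\gamma,t)=\int_{\widehat{\Gamma}}\xi'(\delta)\overline{\eta'}(\delta-\gamma)\delta(t)\overline{\gamma(t)}\,d\delta$, substituting into the $L^2(G)$-pairing, integrating over $t\in\Gamma$ first to obtain via orthogonality of characters a constraint identifying the two $\widehat{\Gamma}$-variables, and then integrating over $\gamma\in\widehat{\Gamma}$; but this merely re-runs the computation underlying the Moyal identity in the dual picture and is strictly less economical than the transfer argument, so I would only relegate it to a remark.
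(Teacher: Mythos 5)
Your argument is correct and is essentially the paper's own proof: the paper likewise writes $\xi'=\mc{F}(\breve{\xi'})$ etc., identifies $V^{\circ}_{\eta'}\xi'$ with $V_{\breve{\eta'}}\breve{\xi'}$ via the intertwining proposition, applies the Moyal identity on $L^2(\Gamma)$, and finishes with the unitarity of $\mc{F}$. The only difference is your explicit remark about Plancherel surjectivity and the optional brute-force computation, neither of which changes the route.
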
 
\begin{proof}
We note that $\breve{\eta'} \in L^2(\Gamma)$ is such that $\mc{F}(\breve{\eta'})=\eta'$, or the inverse Fourier transform of $\eta'$. Then using Moyal identity and Theorem 
\[\begin{split}
\lb V^{\circ}_{\eta'}\xi', V^{\circ}_{\psi'}\phi' \rb_{L^2(G)} &=  \lb V_{\breve{\eta'}}\breve{\xi'}, V_{\breve{\psi'}}\breve{\phi'} \rb_{L^2(G)}\\
&=\lb \breve{\xi'},\breve{\phi'} \rb \lb \breve{\psi'}, \breve{\eta'} \rb\\
&=\lb \xi',\phi' \rb \lb \psi', \eta' \rb.
\end{split}
\]
\end{proof} 
One of consequences is that we can view $V^{\circ}_{\eta'}: L^2(\widehat{\Gamma}) \to L^2(G)$ as an isometry like $V_{\eta}$.
\begin{cor}
If we fix $\eta'$ such that $\| \eta'\|^2=1$ in $L^2(\widehat{\Gamma})$, then 
 \[ \|V^{\circ}_{\eta'}(\xi') \|^2_{L^2(G)}= \| \xi'\|_{L^2(\widehat{\Gamma})}^2 \]
 for $\xi' \in L^2(\widehat{\Gamma})$.
\end{cor}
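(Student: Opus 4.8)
The plan is to obtain this as an immediate specialization of the preceding Proposition, the dual Moyal identity. In the identity
\[ \lb V^{\circ}_{\eta'}\xi', V^{\circ}_{\psi'}\phi' \rb_{L^2(G)}=\lb \xi',\phi' \rb \lb \psi', \eta' \rb \]
I would set $\phi'=\xi'$ and $\psi'=\eta'$. The left-hand side then becomes $\lb V^{\circ}_{\eta'}\xi', V^{\circ}_{\eta'}\xi' \rb_{L^2(G)}=\|V^{\circ}_{\eta'}(\xi')\|^2_{L^2(G)}$, while the right-hand side becomes $\lb \xi',\xi' \rb \lb \eta', \eta' \rb=\|\xi'\|^2_{L^2(\widehat{\Gamma})}\,\|\eta'\|^2_{L^2(\widehat{\Gamma})}$.

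The only remaining input is the normalization hypothesis $\|\eta'\|^2=1$, which collapses the second factor to $1$ and yields $\|V^{\circ}_{\eta'}(\xi')\|^2_{L^2(G)}=\|\xi'\|^2_{L^2(\widehat{\Gamma})}$, exactly the claim. Since everything reduces to a substitution into an already-established identity, there is no genuine obstacle here; the real work was already carried out in proving the dual Moyal identity (which itself was reduced to the classical Moyal identity through the Fourier intertwining Theorem).

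If a reader prefers an argument that does not invoke the Proposition directly, one can instead transport through the Fourier transform: write $\xi'=\mc{F}(\breve{\xi'})$, use the Theorem to identify $V^{\circ}_{\eta'}\xi'=V_{\breve{\eta'}}\breve{\xi'}$, apply the scalar Moyal identity with the unit window $\breve{\eta'}$ (noting $\|\breve{\eta'}\|=\|\eta'\|=1$ by Plancherel), and conclude. Both routes are one line; I would present the first.
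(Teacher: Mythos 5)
Your proposal is correct and matches the paper's (implicit) argument: the corollary is exactly the specialization $\phi'=\xi'$, $\psi'=\eta'$ of the dual Moyal identity just proved, with the normalization $\|\eta'\|^2=1$ removing the second factor. Nothing further is needed.
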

If we consider the dual generating system $\{ \pi^{\circ}\iota(g)\eta' \mid g\in G \}$ for $\eta' \in L^2(\widehat{\Gamma})$ together with $\psi'$ such that $\lb \psi', \eta' \rb \ne 0$ we can recover $\xi' \in L^2(\widehat{\Gamma})$. 
\begin{cor}
Let $\eta'$ and $\psi'$ be in $L^2(\widehat{\Gamma})$ such that $\lb \psi', \eta' \rb \ne 0$. Then for any 
$\xi' \in L^2(\widehat{\Gamma})$
\[\xi'= \lb \psi', \eta' \rb^{-1}\int_G \lb \xi', \pi^{\circ}\iota(g) \eta' \rb \pi^{\circ}\iota(g)\psi' d\mu_G \]
\end{cor}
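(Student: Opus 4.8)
The plan is to establish the formula \emph{weakly}: I will interpret the right-hand side as an $L^2(\widehat{\Gamma})$-valued integral, pair it against an arbitrary $\phi' \in L^2(\widehat{\Gamma})$, reduce the resulting scalar integral to an inner product in $L^2(G)$, and then apply the dual Moyal identity (the Proposition immediately preceding). Nondegeneracy of the inner product then finishes the argument.

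First I would check that the vector-valued integral makes sense. For fixed $\phi'$, the modulus of the integrand $\lb \xi', \pi^{\circ}\iota(g)\eta' \rb\,\pi^{\circ}\iota(g)\psi'$ tested against $\phi'$ is, directly from the definitions of $V^{\circ}$ and the conjugate-symmetry of the inner product,
\[ \bigl| \lb \xi', \pi^{\circ}\iota(g)\eta' \rb\, \lb \pi^{\circ}\iota(g)\psi', \phi' \rb \bigr| = \bigl| V^{\circ}_{\eta'}\xi'(g) \bigr|\; \bigl| V^{\circ}_{\psi'}\phi'(g) \bigr|. \]
Taking $\xi' = \phi'$ and $\psi' = \eta'$ in the Proposition shows $V^{\circ}_{\eta'}\xi', V^{\circ}_{\psi'}\phi' \in L^2(G)$, so by Cauchy--Schwarz on $L^2(G)$ the product above lies in $L^1(G)$. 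Hence
\[ \phi' \;\longmapsto\; \lb \psi', \eta' \rb^{-1}\int_G \lb \xi', \pi^{\circ}\iota(g)\eta' \rb\, \lb \pi^{\circ}\iota(g)\psi', \phi' \rb \, d\mu_G \]
is a bounded conjugate-linear functional on $L^2(\widehat{\Gamma})$, so by the Riesz representation theorem there is a unique $R \in L^2(\widehat{\Gamma})$ with $\lb R, \phi' \rb$ equal to this expression for every $\phi'$; this $R$ is the meaning of the right-hand side of the asserted identity.

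It then remains to identify $R$. Using $\lb \pi^{\circ}\iota(g)\psi', \phi' \rb = \overline{V^{\circ}_{\psi'}\phi'(g)}$ and the definition of $V^{\circ}_{\eta'}\xi'$,
\[ \lb R, \phi' \rb = \lb \psi', \eta' \rb^{-1}\int_G V^{\circ}_{\eta'}\xi'(g)\,\overline{V^{\circ}_{\psi'}\phi'(g)}\, d\mu_G = \lb \psi', \eta' \rb^{-1}\,\lb V^{\circ}_{\eta'}\xi', V^{\circ}_{\psi'}\phi' \rb_{L^2(G)}. \]
By the dual Moyal identity the last inner product equals $\lb \xi', \phi' \rb\,\lb \psi', \eta' \rb$, and the factor $\lb \psi', \eta' \rb \ne 0$ cancels, so $\lb R, \phi' \rb = \lb \xi', \phi' \rb$ for all $\phi' \in L^2(\widehat{\Gamma})$. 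Nondegeneracy of the inner product then gives $R = \xi'$, which is precisely the claim.

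I do not anticipate a genuine difficulty: granted the dual Moyal identity, this corollary is only a repackaging of it, in exact analogy with the way the classical Gabor reconstruction formula for the short-time Fourier transform $V_{\eta}$ is a repackaging of Moyal's identity. The one step that deserves a line of care is giving the $L^2(\widehat{\Gamma})$-valued integral a precise meaning, which is handled above. Alternatively one can sidestep that discussion altogether by transporting the statement to $L^2(\Gamma)$ via $\mc{F}^{-1}$, using the relation $V^{\circ}_{\eta'}\xi' = V_{\breve{\eta'}}\breve{\xi'}$ from the proof of the Proposition together with the unitarity of $\mc{F}$, and quoting the classical reconstruction formula for $V_{\eta}$ there.
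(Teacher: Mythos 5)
Your argument is correct and is exactly the derivation the paper intends: the corollary is stated without proof as a weak-type consequence of the dual Moyal identity, and your pairing of the (weakly interpreted) vector-valued integral against an arbitrary $\phi'$, followed by the identity $\lb V^{\circ}_{\eta'}\xi', V^{\circ}_{\psi'}\phi' \rb_{L^2(G)}=\lb \xi',\phi' \rb \lb \psi', \eta' \rb$ and cancellation of the nonzero factor $\lb \psi', \eta' \rb$, is the standard reconstruction argument being invoked. Your added care in defining the right-hand side via Riesz representation (or, alternatively, transporting the statement to $L^2(\Gamma)$ by $\mc{F}^{-1}$) is a welcome precision, not a deviation.
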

\begin{lem}
\[(\pi^{\circ}\iota (g))^*=\sigma(g,g)\pi^{\circ} \iota (g^{-1})\]
\end{lem}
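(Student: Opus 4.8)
The plan is to reduce this to the adjoint formula \eqref{E:adjoint} for $\pi$ by conjugating with the Fourier transform. By the intertwining Theorem, $\mc{F}\colon L^2(\Gamma)\to L^2(\widehat{\Gamma})$ is unitary and satisfies $\pi^{\circ}\iota(g)=\mc{F}\,\pi(g)\,\mc{F}^{-1}$ for every $g\in G$; here unitarity is precisely the Plancherel normalization of the Haar measure on $\widehat{\Gamma}$ fixed at the outset.

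First I would take adjoints in $\pi^{\circ}\iota(g)=\mc{F}\,\pi(g)\,\mc{F}^{-1}$. Using $\mc{F}^{*}=\mc{F}^{-1}$ and $(\mc{F}^{-1})^{*}=\mc{F}$, this gives
\[
(\pi^{\circ}\iota(g))^{*}=\mc{F}\,\pi(g)^{*}\,\mc{F}^{-1}.
\]
Then I would substitute $\pi(g)^{*}=\sigma(g,g)\pi(g^{-1})$ from \eqref{E:adjoint}. Since $\sigma(g,g)\in\mb{T}$ is a scalar it commutes past $\mc{F}$, and $\mc{F}\,\pi(g^{-1})\,\mc{F}^{-1}=\pi^{\circ}\iota(g^{-1})$ by the intertwining relation applied at $g^{-1}$, which yields $(\pi^{\circ}\iota(g))^{*}=\sigma(g,g)\pi^{\circ}\iota(g^{-1})$, as claimed.

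There is essentially no obstacle here; the only point requiring care is the order reversal and the swap $\mc{F}\leftrightarrow\mc{F}^{-1}$ when adjointing a unitary conjugation, together with the genuine unitarity of $\mc{F}$. If one prefers to avoid invoking the Theorem, an intrinsic argument works just as well: $\pi^{\circ}\iota(\gamma,t)=T_{\gamma}M_{-t}$ is a product of unitaries on $L^2(\widehat{\Gamma})$, so $\pi^{\circ}\iota(g)^{*}=\pi^{\circ}\iota(g)^{-1}$; applying the $\sigma$-projective relation to $\pi^{\circ}\iota(g)\pi^{\circ}\iota(g^{-1})=\sigma(g,g^{-1})I$ gives $\pi^{\circ}\iota(g)^{*}=\overline{\sigma(g,g^{-1})}\,\pi^{\circ}\iota(g^{-1})$, and a one-line computation with the canonical cocycle $\sigma((\gamma,t),(\gamma',t'))=\overline{\gamma'(t)}$ shows $\overline{\sigma(g,g^{-1})}=\sigma(g,g)$. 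A third option is a direct computation of $\lb \pi^{\circ}\iota(g)\xi',\eta'\rb$ over $\widehat{\Gamma}$, moving $T_{\gamma}$ and $M_{-t}$ across the inner product exactly as in the computation preceding \eqref{E:adjoint}.
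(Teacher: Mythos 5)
Your argument is correct, but it follows a different route from the paper. The paper never leaves $L^2(\widehat{\Gamma})$: it regards $\pi$ and $\pi^{\circ}$ as representations of $\widehat{G}=\Gamma\times\widehat{\Gamma}$ with the cocycle $\sigma'$, combines the two previously recorded relations $\pi(h)^{*}=\sigma'(h,h)\pi(h^{-1})$ and $\pi^{\circ}(h^{-1})=\pi(h)^{*}$ at $h=\iota(g)$, and finishes with the scalar identity $\overline{\sigma'(\iota(g),\iota(g))}=\sigma(g,g)$ (using $\iota(g)^{-1}=\iota(g^{-1})$). Your main route instead conjugates \eqref{E:adjoint} by the Fourier transform, relying on the intertwining Theorem $\pi^{\circ}\iota(g)=\mc{F}\,\pi(g)\,\mc{F}^{-1}$ and the unitarity of $\mc{F}$; the adjoint computation $(\mc{F}\pi(g)\mc{F}^{-1})^{*}=\mc{F}\pi(g)^{*}\mc{F}^{-1}$ and the substitution of \eqref{E:adjoint} are both handled correctly. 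What the paper's proof buys is independence from the intertwining Theorem: it only uses the algebraic relations between $\pi$ and $\pi^{\circ}$ on the dual group, so it exhibits directly how the cocycles $\sigma$ and $\sigma'$ exchange under $\iota$. What your route buys is conceptual economy: once $\mc{F}$ is known to implement a unitary equivalence, every $*$-relation for $\pi$ transports verbatim to $\pi^{\circ}\iota$, with no cocycle bookkeeping. Your second alternative (unitarity of $T_{\gamma}M_{-t}$ plus the $\sigma$-projective relation $\pi^{\circ}\iota(g)\pi^{\circ}\iota(g^{-1})=\sigma(g,g^{-1})I$ and the check $\overline{\sigma(g,g^{-1})}=\sigma(g,g)$) is also sound, and is arguably the most elementary of the three since it needs neither $\mc{F}$ nor the representation $\pi$ at all; it is closest in spirit to, but still distinct from, the paper's cocycle manipulation on $\widehat{G}$.
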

\begin{proof}
On $\widehat{G}$ equipped with the 2-cocycle $\sigma'$, $\pi^{\circ}$ and $\pi$ satisfy
\begin{equation}
\pi(t, \gamma)^{*}=\sigma'((t,\gamma),(t,\gamma))\pi((t^{-1}, \gamma^{-1}))
\end{equation} 
\begin{equation}
\pi^{\circ}(t^{-1}, \gamma^{-1})=(\pi(t, \gamma))^*
\end{equation}
Therefore, \[ 
\begin{split}
(\pi^{\circ}(\iota(g)))^*&= \pi (\iota(g)^{-1})\\
&= \overline{\sigma'(\iota(g), \iota(g))}\pi(\iota(g))^*\\
&=\overline{\sigma'(\iota(g), \iota(g))}\pi^{\circ}(\iota(g^{-1}))\\
&=\sigma(g,g)\pi^{\circ}\iota(g^{-1}).
\end{split}
\]
\end{proof}
\begin{prop}\label{P:productSSFT}
Let $\xi_1, \xi_2, \eta_1, \eta_2$ be the elements in $L^2(\widehat{\Gamma})$such that  both $V^{\circ}_{\eta_2}\xi_2$ and $V^{\circ}_{\eta_1}\xi_1$ are in $L^1(G, \sigma)$. Then we have 
\begin{equation}
V^{\circ}_{\eta_2}\xi_2 \natural V^{\circ}_{\eta_1}\xi_1=\lb \xi_1, \eta_2\rb V^{\circ}_{\eta_1} \xi_2.
\end{equation}
\begin{proof}
First we note that 
\[ \begin{split}
\sigma((\gamma', t'), (\gamma',t'))\sigma((\gamma', t')^{-1}, (\gamma, t))&= \overline{\gamma'(t')}\gamma(t')\\
&=\gamma\gamma'^{-1}(t')\\
&=\overline{\sigma((\gamma', t'), (\gamma, t)(\gamma', t')^{-1})}\\
\end{split}
\]
Or we can rewrite it using addition  for $g, g' \in G=\widehat{\Gamma}\times \Gamma$
\begin{equation}\label{E:cocycl}
\sigma(g',g')\sigma(-g, g')=\sigma(g-g', g')
\end{equation} 
Then 
\[ 
\begin{split}
V^{\circ}_{\eta_2}\xi_2 \natural V^{\circ}_{\eta_1}\xi_1(g)&=\int_{G} V^{\circ}_{\eta_2}(g')V^{\circ}_{\eta_1}\xi_1(g-g')\sigma(g', g-g')d\mu_G\\
&=\int_{G}\lb \xi_2, \pi^{\circ}\iota(g')\eta_2\rb \lb \xi_1, \pi^{\circ}\iota(g-g')\eta_1\rb \sigma(g', g-g') d\mu_G\\
&=\int_G \lb \xi_2, \pi^{\circ}\iota(g')\eta_2\rb \lb \xi_1, \sigma(g-g', g')\pi^{\circ}\iota(g-g')\eta_1\rb d\mu_G\\
&=\int_G \lb \xi_2, \pi^{\circ}\iota(g')\eta_2\rb  \lb \xi_1, \sigma(g',g')\pi^{\circ}\iota(-g')\pi^{\circ}\iota(g)\eta_1 \rb d\mu_G\\
&=\int_G \lb \xi_2, \pi^{\circ}\iota(g')\eta_2\rb  \lb \xi_1, \pi^{\circ}\iota (g')^*\pi^{\circ}\iota(g)\eta_1 \rb d\mu_G\\
&=\int_G \lb \xi_2, \pi^{\circ}\iota(g')\eta_2\rb  \lb  \pi^{\circ}\iota (g')\xi_1, \pi^{\circ}\iota(g)\eta_1 \rb d\mu_G\\
&=\lb V^{\circ}_{\eta_2} \xi_2, V^{\circ}_{\xi_1}\pi^{\circ} \iota (g)(\eta_1) \rb_{L^2(G)}\\
&=\lb \xi_2, \pi^{\circ}\iota(g) \eta_1 \rb \lb \xi_1, \eta_2 \rb\\
&=\lb \xi_1, \eta_2 \rb V^{\circ}_{\eta_1} \xi_2.
\end{split}
\]
\end{proof}
\end{prop}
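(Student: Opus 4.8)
The plan is to compute the twisted convolution pointwise at a fixed $g\in G$ and to recognize the resulting integral as an $L^2(G)$-pairing of two dual short-time Fourier transforms, to which the dual Moyal identity (the Proposition for $V^{\circ}$) applies. Writing out \eqref{E:twistedconv} together with the definition $V^{\circ}_{\eta'}\xi'(h)=\lb\xi',\pi^{\circ}\iota(h)\eta'\rb$, one gets
\[ (V^{\circ}_{\eta_2}\xi_2 \natural V^{\circ}_{\eta_1}\xi_1)(g)=\int_G \lb \xi_2,\pi^{\circ}\iota(g')\eta_2\rb\,\lb \xi_1,\pi^{\circ}\iota(g-g')\eta_1\rb\,\sigma(g',g-g')\,d\mu_G(g'). \]
The hypothesis that both factors lie in $L^1(G,\sigma)$ makes this integral absolutely convergent; moreover each factor is a dual STFT of an $L^2$-vector, hence also lies in $L^2(G)$, which is what will let us pass to an $L^2$-inner product at the end.

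The delicate step is to absorb the cocycle weight $\sigma(g',g-g')$ into the second bracket. Using that $\pi^{\circ}\iota$ is a $\sigma$-projective representation of $G$ and the adjoint formula $(\pi^{\circ}\iota(g'))^*=\sigma(g',g')\,\pi^{\circ}\iota(-g')$ from the preceding Lemma, together with the identity $\sigma(g',g')\,\sigma(-g',g)=\overline{\sigma(g',g-g')}$ on $G=\widehat{\Gamma}\times\Gamma$ (a one-line check from $\sigma((\gamma_1,t_1),(\gamma_2,t_2))=\overline{\gamma_2(t_1)}$, keeping in mind that $\lb\cdot,\cdot\rb$ is conjugate-linear in its second variable), I would establish
\[ \sigma(g',g-g')\,\lb \xi_1,\pi^{\circ}\iota(g-g')\eta_1\rb=\lb \pi^{\circ}\iota(g')\xi_1,\ \pi^{\circ}\iota(g)\eta_1\rb. \]
Substituting this into the integral turns it into $\displaystyle\int_G \lb \xi_2,\pi^{\circ}\iota(g')\eta_2\rb\,\overline{\lb \pi^{\circ}\iota(g)\eta_1,\pi^{\circ}\iota(g')\xi_1\rb}\,d\mu_G(g')=\lb V^{\circ}_{\eta_2}\xi_2,\ V^{\circ}_{\xi_1}\bigl(\pi^{\circ}\iota(g)\eta_1\bigr)\rb_{L^2(G)}$, since the integrand is $V^{\circ}_{\eta_2}\xi_2(g')\cdot\overline{V^{\circ}_{\xi_1}(\pi^{\circ}\iota(g)\eta_1)(g')}$.

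Finally I would apply the dual Moyal identity (itself obtained from the Moyal Lemma by transporting along the inverse Fourier transform and Plancherel) with the vectors/windows $\xi\mapsto\xi_2$, $\eta\mapsto\eta_2$, $\phi\mapsto\pi^{\circ}\iota(g)\eta_1$, $\psi\mapsto\xi_1$, which gives $\lb V^{\circ}_{\eta_2}\xi_2, V^{\circ}_{\xi_1}(\pi^{\circ}\iota(g)\eta_1)\rb_{L^2(G)}=\lb\xi_2,\pi^{\circ}\iota(g)\eta_1\rb\,\lb\xi_1,\eta_2\rb=\lb\xi_1,\eta_2\rb\,V^{\circ}_{\eta_1}\xi_2(g)$. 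Since $g\in G$ was arbitrary, this is exactly the asserted formula. I expect the only genuine obstacle to be that middle step: tracking the $\mb{T}$-valued cocycle factors and their complex conjugates correctly as one moves the unitary $\pi^{\circ}\iota(g')$ across the inner product via the adjoint formula and the projective-representation law, so that $\sigma(g',g-g')$ cancels precisely; the remaining manipulations are routine. One could alternatively bypass the cocycle juggling by writing $V^{\circ}_{\eta'}\xi'=V_{\breve{\eta'}}\,\breve{\xi'}$ and invoking the corresponding product formula for the ordinary STFT $V$, but that formula is not recorded in the present excerpt, so the direct computation above is preferable.
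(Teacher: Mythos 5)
Your proposal is correct and follows essentially the same route as the paper: expand the twisted convolution pointwise, use the cocycle identity $\sigma(g',g')\sigma(-g',g)=\overline{\sigma(g',g-g')}$ together with the adjoint formula $(\pi^{\circ}\iota(g'))^*=\sigma(g',g')\pi^{\circ}\iota(-g')$ to rewrite the weighted bracket as $\lb \pi^{\circ}\iota(g')\xi_1,\pi^{\circ}\iota(g)\eta_1\rb$, recognize the integral as $\lb V^{\circ}_{\eta_2}\xi_2, V^{\circ}_{\xi_1}(\pi^{\circ}\iota(g)\eta_1)\rb_{L^2(G)}$, and finish with the dual Moyal identity. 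In fact your additive form of the cocycle identity is the accurate transcription of the paper's multiplicative computation (the paper's displayed equation (\ref{E:cocycl}) has its arguments garbled), so no changes are needed.
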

\begin{cor}
Let $\xi \in L^2( \widehat{\Gamma})$ such that $\| \xi\|_2=1$ and $V^{\circ}_{\xi} \xi \in L^1(G,\sigma)$. Then 
$V^{\circ}_{\xi}\xi$ is a projector in $L^1(G,\sigma)$. 
\end{cor}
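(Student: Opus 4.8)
The plan is to verify that $p:=V^{\circ}_{\xi}\xi$ satisfies the two defining properties of a projector in the involutive Banach algebra $L^{1}(G,\sigma)$: idempotency for the twisted convolution $\natural$ of \eqref{E:twistedconv}, and invariance under the twisted involution $*$ of \eqref{E:twistedinvo}. Both will follow by specializing and combining results already established in this section, so no new analysis is needed; the hypothesis $V^{\circ}_{\xi}\xi\in L^{1}(G,\sigma)$ enters only to legitimize the use of Proposition \ref{P:productSSFT}.

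For idempotency I would simply apply Proposition \ref{P:productSSFT} with $\xi_{1}=\xi_{2}=\eta_{1}=\eta_{2}=\xi$. Since $p\in L^{1}(G,\sigma)$, both factors of $p\natural p$ lie in $L^{1}(G,\sigma)$, and the product formula gives $p\natural p=\langle\xi,\xi\rangle\,V^{\circ}_{\xi}\xi=\|\xi\|_{2}^{2}\,p=p$ by the normalization $\|\xi\|_{2}=1$. For self-adjointness I would start from the definition $p^{*}(g)=\sigma(g,g)\overline{p(-g)}=\sigma(g,g)\overline{\langle\xi,\pi^{\circ}\iota(-g)\xi\rangle}$, conjugate the matrix coefficient and move the operator to the other slot, $\overline{\langle\xi,\pi^{\circ}\iota(-g)\xi\rangle}=\langle\xi,(\pi^{\circ}\iota(-g))^{*}\xi\rangle$, and then invoke the Lemma preceding Proposition \ref{P:productSSFT}, which yields $(\pi^{\circ}\iota(-g))^{*}=\sigma(-g,-g)\,\pi^{\circ}\iota(g)$. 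Using $\sigma(g,g)=\sigma(g^{-1},g^{-1})$ (recorded right after \eqref{E:adjoint}) the scalar $\sigma(-g,-g)$ equals $\sigma(g,g)$, and by conjugate-linearity of $\langle\cdot,\cdot\rangle$ in the second argument it pulls out as $\overline{\sigma(g,g)}$. Hence $p^{*}(g)=\sigma(g,g)\overline{\sigma(g,g)}\,\langle\xi,\pi^{\circ}\iota(g)\xi\rangle=V^{\circ}_{\xi}\xi(g)=p(g)$, so $p^{*}=p$. Together with idempotency this shows $p$ is a projector.

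The only point that requires any care is the bookkeeping of the unimodular factors $\sigma(g,g)$ and the consistent use of the sesquilinearity convention for $\langle\cdot,\cdot\rangle$ (linear in the first variable, as fixed earlier in the section); there is no genuine obstacle, analytic or otherwise, beyond assembling the product formula of Proposition \ref{P:productSSFT} and the adjoint formula for $\pi^{\circ}\iota$.
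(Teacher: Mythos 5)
Your proposal is correct and follows essentially the same route as the paper: idempotency by specializing Proposition \ref{P:productSSFT} with all four vectors equal to $\xi$, and self-adjointness from the twisted involution \eqref{E:twistedinvo} together with the adjoint formula $(\pi^{\circ}\iota(g))^*=\sigma(g,g)\pi^{\circ}\iota(g^{-1})$ (the paper merely phrases the latter as the slightly more general identity $(V^{\circ}_{\eta}\xi)^*=V^{\circ}_{\xi}\eta$ before setting $\eta=\xi$, while you apply the lemma at $-g$ and use $\sigma(-g,-g)=\sigma(g,g)$; the bookkeeping is equivalent).
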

\begin{proof}
By (\ref{E:twistedinvo}),  for $g\in G$\[ 
\begin{split}
(V^{\circ}_{\eta} \xi)^*(g)&=\sigma(g,g)\overline{V^{\circ}_{\eta}\xi(-g)}\\
&=\overline{\lb \xi, \sigma(g,g)\pi^{\circ}\iota(-g)\eta\rb}\\
&=\overline{ \lb \xi, \pi^{\circ}\iota(g)^*\eta \rb }\\
&=\lb \eta, \pi^{\circ}\iota(g)\xi \rb \\
&=V^{\circ}_{\xi} \eta(g).
\end{split}\]
Thus, $(V^{\circ}_{\xi}\xi)^*=V^{\circ}_{\xi}\xi$. Moreover, by Proposition \ref{P:productSSFT} it follows that 
\[ V^{\circ}_{\xi}\xi \natural V^{\circ}_{\xi}\xi= V^{\circ}_{\xi}\xi.\]
\end{proof}

\section{Heisenberg modules and some geometric observations}\label{S:Heisenberg}
In this section, we only consider the group $\Gamma$ is elementary in the sense of Bruhat or has a differential structure so that we can think of the Schwarz space $\mc{E}=\mc{S}(\Gamma)$.  Rather we could be more flexible when we consider the Feichtinger algebra $S_0(\Gamma)$ and $S_0(\widehat{\Gamma}\times \Gamma)$ \cite{J:Segal}. Though we could prove different versions of Theorem \ref{T:Morita1} \ref{T:Morita2} \ref{T:Morita3} using the Feichtinger algebra and noncommutative Wiener algebras, we refer the reader to see \cite{L:VBoverNT} for more general constructions.\\   

Let  $\mc{A}$ be the algebra of all operators of $\pi(a)$ for $a \in \mc{S}(G)$. Then we can define a left action of a smooth subalgebra  $\mc{S}(G)$ of $L^1(G, \sigma)$  on $\mc{S}(\Gamma)$ via 
\begin{equation}\label{E:action}
a \cdot \xi = \pi(a) \xi = \int_{G} a(\gamma, t)\pi(\gamma, t)\xi\, d\mu_{G}
 \end{equation}  for $a\in \mc{S}(G)$ and $\xi \in \mc{S}(\Gamma)$. In 
ddition, we can define a $\mc{A}$-valued hermitian product; 
\begin{equation}\label{E:innerproduct}
_{\mc{A}}\lb \xi, \eta \rb= \int_{G}\lb \xi, \pi(\gamma, t)\eta \rb \pi(\gamma, t) \,d\mu_{G}  
\end{equation}
for $\xi, \eta \in \mc{S}(\Gamma)$ since $V_{\eta} \xi \in \mc{S}(G)$.  In this way the space $\mc{S}(\Gamma)$ becomes a left $\mc{A}$-module.   

Similarly, we let $\mc{A}^{\circ}$ be the algebra of all operators of $\pi^{\circ}\iota(a)$ for $a \in \mc{S}(G)$ and define a left action of a smooth subalgebra $\mc{S}(G)$ of $L^1(G, \sigma)$ via 
\begin{equation} 
a \cdot \xi' = \pi^{\circ}\iota(a) \xi' = \int_{G} a(\gamma, t)\pi^{\circ}\iota(\gamma, t)\xi'\, d\mu_{G} 
\end{equation}  for $a\in \mc{S}(G)$ and $\xi' \in \mc{S}(\widehat{\Gamma})$. In 
ddition, we can define a $\mc{A}^{\circ}$-valued hermitian product; 
\begin{equation}
_{\mc{A}^{\circ}}\lb \xi', \eta' \rb= \int_{G}\lb \xi', \pi^{\circ}\iota(\gamma, t)\eta' \rb \pi^{\circ}\iota(\gamma, t) \,d\mu_{G}  
\end{equation}
for $\xi', \eta' \in \mc{S}(\widehat{\Gamma})$ since $V^{\circ}_{\eta'} \xi' \in \mc{S}(G)$.

\begin{prop}
For $\xi, \eta, \phi \in L^2(\Gamma)$
\[ \mc{F}(_{\mc{A}}\lb \xi, \eta \rb \cdot \psi)=_{\mc{A}^{\circ}}\lb \widehat{\xi},\widehat{\eta}\rb \cdot \widehat{\psi}\]
\end{prop}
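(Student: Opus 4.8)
The plan is to expand both sides into integrated representations and then import the two intertwining facts already in hand: the operator identity $\mc{F}\circ\pi(g)=\pi^{\circ}\iota(g)\circ\mc{F}$ from the Theorem, and the matrix-coefficient identity $\lb\xi,\pi(g)\eta\rb=\lb\widehat{\xi},\pi^{\circ}\iota(g)\widehat{\eta}\rb$ from the matrix-coefficient Proposition above.

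First I would rewrite the left-hand side by combining \eqref{E:innerproduct} and \eqref{E:action}:
\[
{}_{\mc{A}}\lb\xi,\eta\rb\cdot\psi=\int_{G}\lb\xi,\pi(\gamma,t)\eta\rb\,\pi(\gamma,t)\psi\,d\mu_{G},
\]
i.e. it equals $\pi(V_{\eta}\xi)\psi$. Under the standing type of hypothesis that $V_{\eta}\xi\in L^{1}(G,\sigma)$ (as in Proposition \ref{P:productSSFT}; or simply working with the smooth subalgebra, where $V_{\eta}\xi\in\mc{S}(G)$) this is a genuine $L^{2}(\Gamma)$-valued Bochner integral, since $\|\pi(\gamma,t)\psi\|=\|\psi\|$ is constant. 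Because $\mc{F}\colon L^{2}(\Gamma)\to L^{2}(\widehat{\Gamma})$ is bounded (indeed unitary), it commutes with the integral:
\[
\mc{F}\bigl({}_{\mc{A}}\lb\xi,\eta\rb\cdot\psi\bigr)=\int_{G}\lb\xi,\pi(\gamma,t)\eta\rb\,\mc{F}\bigl(\pi(\gamma,t)\psi\bigr)\,d\mu_{G}.
\]

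Next I would substitute the two identities into the integrand. By the Theorem, $\mc{F}(\pi(\gamma,t)\psi)=\pi^{\circ}\iota(\gamma,t)\widehat{\psi}$; by the matrix-coefficient Proposition, $\lb\xi,\pi(\gamma,t)\eta\rb=\lb\widehat{\xi},\pi^{\circ}\iota(\gamma,t)\widehat{\eta}\rb$. Plugging both in turns the right-hand side into
\[
\int_{G}\lb\widehat{\xi},\pi^{\circ}\iota(\gamma,t)\widehat{\eta}\rb\,\pi^{\circ}\iota(\gamma,t)\widehat{\psi}\,d\mu_{G},
\]
which is precisely ${}_{\mc{A}^{\circ}}\lb\widehat{\xi},\widehat{\eta}\rb\cdot\widehat{\psi}$ by the definition of the $\mc{A}^{\circ}$-valued hermitian product and the left $\mc{S}(G)$-action on $\mc{S}(\widehat{\Gamma})$.

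The only point requiring care is the passage of $\mc{F}$ under the integral sign; everything else is a one-line substitution. I expect to dispatch it by observing that $g\mapsto\lb\xi,\pi(g)\eta\rb\,\pi(g)\psi$ is Bochner integrable in $L^{2}(\Gamma)$ under the integrability hypothesis above, and that a bounded linear operator commutes with Bochner integrals; alternatively one can verify the identity weakly by pairing both sides against an arbitrary $\zeta\in L^{2}(\widehat{\Gamma})$ and using unitarity of $\mc{F}$ to move everything back to $L^{2}(\Gamma)$, where it reduces to the definition of ${}_{\mc{A}}\lb\xi,\eta\rb\cdot\psi$.
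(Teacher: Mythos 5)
Your proposal is correct and follows essentially the same route as the paper: expand ${}_{\mc{A}}\lb\xi,\eta\rb\cdot\psi$ as the integral $\int_G\lb\xi,\pi(g)\eta\rb\,\pi(g)\psi\,d\mu_G$, move $\mc{F}$ inside the integral (the paper invokes Fubini where you use Bochner integrability plus boundedness of $\mc{F}$), and then apply the intertwining identity $\mc{F}(\pi(g)\psi)=\pi^{\circ}\iota(g)\widehat{\psi}$ together with the matrix-coefficient identity $\lb\xi,\pi(g)\eta\rb=\lb\widehat{\xi},\pi^{\circ}\iota(g)\widehat{\eta}\rb$. Your extra care in justifying the interchange of $\mc{F}$ with the integral is a welcome refinement but does not change the argument.
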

\begin{proof}
Note that $_{\mc{A}}\lb \xi, \eta \rb \cdot \psi : s \mapsto \int_{G} \lb \xi, \pi(g)\eta \rb \pi(g)\psi(s)d\mu_G $. Then using Fubini Theorem 
\[ \begin{split}
\mc{F}(_{\mc{A}}\lb \xi, \eta \rb \cdot \psi)& =  \int_{G} \lb \xi, \pi(g)\eta \rb \mc{F}(\pi(g)\psi )d\mu_G \\
&= \int_{G} \lb \widehat{\xi}, \pi^{\circ}\iota (g) \widehat{\eta} \rb \pi^{\circ}\iota(g)\widehat{\psi} d\mu_G.
\end{split}
\]
\end{proof}
The following is well known due to Rieffel but explicit arguments could be extracted from \cite{DLL:Sigma}. 
\begin{thm}\label{T:Morita1}
$\mc{S}(\Gamma)$ is an equivalence bimodule between $\mc{A}$ and $\C$ with respect to (\ref{E:action}) and (\ref{E:innerproduct}). 
\end{thm}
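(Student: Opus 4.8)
The plan is to verify the Rieffel imprimitivity conditions for $\mc{S}(\Gamma)$ viewed as a left $\mc{A}$-module and right $\C$-module, where the right $\C$-valued inner product is simply $\lb \xi, \eta \rb_{\C} = \lb \eta, \xi \rb_{L^2(\Gamma)}$ (or its conjugate, depending on convention) and the right action of $\C$ is scalar multiplication. The key structural input is already assembled in the excerpt: the $\mc{A}$-valued inner product $_{\mc{A}}\lb \xi, \eta \rb = \pi(V_{\eta}\xi)$ is well defined because $V_{\eta}\xi \in \mc{S}(G)$ (the Moyal identity shows it lies in $L^2(G)$, and smoothness of the window and signal pushes it into the Schwartz class), and the left action $a \cdot \xi = \pi(a)\xi$ makes $\mc{S}(\Gamma)$ a left $\mc{A}$-module.

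First I would check the algebraic compatibility identities. The $\mc{A}$-linearity of $_{\mc{A}}\lb \cdot, \cdot \rb$ in the first variable, i.e. $_{\mc{A}}\lb a\cdot\xi, \eta\rb = a \cdot {}_{\mc{A}}\lb \xi,\eta\rb$, reduces to the identity $\pi(V_{\eta}(\pi(a)\xi)) = \pi(a)\,\pi(V_{\eta}\xi) = \pi(a \natural V_{\eta}\xi)$, which in turn is the convolution identity $V_{\eta}(\pi(a)\xi) = a \natural V_{\eta}\xi$; this is a direct computation using $\pi(g_1)\pi(g_2) = \sigma(g_1,g_2)\pi(g_1g_2)$ and the definition of twisted convolution. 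Next, the adjoint/self-adjointness relation $_{\mc{A}}\lb \xi, \eta\rb^* = {}_{\mc{A}}\lb \eta, \xi\rb$ follows from the computation $(V_{\eta}\xi)^* = V_{\xi}\eta$ (the same kind of calculation already carried out in the excerpt for $V^{\circ}$ in the corollary following Proposition \ref{P:productSSFT}, using \eqref{E:twistedinvo} and \eqref{E:adjoint}). Finally the crucial associativity condition linking the two inner products, ${}_{\mc{A}}\lb \xi, \eta\rb \cdot \phi = \xi \cdot \lb \eta, \phi\rb_{\C} = \lb \phi, \eta\rb_{L^2}\,\xi$: unwinding the left-hand side gives $\int_G \lb \xi, \pi(g)\eta\rb\,\pi(g)\phi\, d\mu_G$, and this equals $\lb \phi,\eta\rb\,\xi$ precisely by the inversion/reproducing formula for the short-time Fourier transform, which is the continuous analogue of the Gabor reconstruction identity and itself a consequence of the Moyal identity. (Indeed the excerpt states the dual version of exactly this in the corollary following Proposition~\ref{P:productSSFT}.)

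Then I would address the positivity and density (fullness) requirements. Positivity of $_{\mc{A}}\lb \xi, \xi\rb$ as an operator in $\mc{A} \subset B(L^2(\Gamma))$ is immediate once associativity is in hand, since $\lb {}_{\mc{A}}\lb\xi,\xi\rb\phi, \phi\rb_{L^2} = |\lb \phi, \xi\rb_{L^2}|^2 \cdot \| \text{(const)} \|$ — more precisely $\lb{}_{\mc{A}}\lb\xi,\xi\rb \cdot \phi,\phi\rb = \lb \xi,\xi\rb_{L^2}\,|\lb\phi,\xi\rb|^2 \ge 0$ on a spanning set, hence $_{\mc{A}}\lb\xi,\xi\rb \ge 0$; positivity of the $\C$-valued form is trivial. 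Fullness on the $\C$ side is clear (take $\xi = \eta$ nonzero, so $\lb\xi,\xi\rb_{\C} \ne 0$). Fullness on the $\mc{A}$ side — that the closed span of $\{{}_{\mc{A}}\lb\xi,\eta\rb : \xi,\eta \in \mc{S}(\Gamma)\}$ is all of $\mc{A}$ — follows because $\mc{A}$ is by definition generated by $\pi(a)$ for $a \in \mc{S}(G)$ and every such $a$ can be approximated in the appropriate topology by finite sums of functions $V_{\eta}\xi$; one standard route is to note that for a fixed normalized $\eta$ the map $a \mapsto \pi(a)\pi(V_{\eta}\eta)$ recovers $\pi(a)$ up to the scalar $\|\eta\|^2$ after integrating, since $V_{\eta}\eta$ acts as an approximate/actual identity factor via the reconstruction formula.

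The main obstacle I anticipate is the functional-analytic bookkeeping rather than any single hard identity: namely checking that $V_{\eta}\xi$ really lands in $\mc{S}(G)$ (so that all the integrals converge and $_{\mc{A}}\lb\xi,\eta\rb$ genuinely lies in the smooth algebra $\mc{A}$, not merely its $C^*$-closure), and pinning down the completion/closure statements needed for fullness and for calling $\mc{S}(\Gamma)$ an "equivalence bimodule" in the pre-$C^*$ (or Fréchet) sense. These are exactly the points the excerpt sidesteps by citing \cite{DLL:Sigma} and \cite{L:VBoverNT}, and where the hypothesis that $\Gamma$ is elementary (or that one works with $S_0(\Gamma)$) is used; I would either invoke those references for the Schwartz-space mapping properties of the short-time Fourier transform or, if a self-contained argument is wanted, estimate the seminorms of $V_{\eta}\xi$ directly in terms of those of $\xi$ and $\eta$ using integration by parts in the defining integral.
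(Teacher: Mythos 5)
Your outline matches the paper's treatment: the paper states Theorem \ref{T:Morita1} as known (citing Rieffel and \cite{DLL:Sigma}) and runs exactly this argument---associativity of the two hermitian products via the Moyal identity, positivity as its consequence, the adjoint relation $(V_{\eta}\xi)^*=V_{\xi}\eta$, and fullness via density of the span of short-time Fourier transforms, with the Schwartz-class bookkeeping deferred to the references---when it proves the parallel statement for $\mc{S}(\widehat{\Gamma})$. Apart from minor slips of convention (e.g.\ the stray factor $\lb \xi,\xi\rb_{L^2}$ in your positivity display, where the pairing equals $|\lb \phi,\xi\rb|^{2}$, and the somewhat loose approximate-identity remark for fullness), your proposal is correct and takes essentially the same route as the paper.
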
 
Using Fourier transform we have a parallel result. Since the idea of proof is almost same with one for Theorem \ref{T:Morita1}, we provide the proof for the following theorem.    
\begin{thm}\label{T:Morita2}
The space $\mc{S}(\widehat{\Gamma})$ is an equivalence bimodule between $\mc{A}^{\circ}$ and $\C$ with respect to the actions:
\[ K\cdot \xi'=  \int_{G} k(\gamma, t)\pi^{\circ}\iota(\gamma, t)\xi'\, d\mu_{G} , \quad \xi' \cdot \lambda=\xi' \bar{\lambda}
 \] for $\xi' \in \mc{S}(\widehat{\Gamma})$ and $k \in \mc{S}(G)$, $\lambda \in \C$; and $\mc{A}^{\circ}$ and $\C$-valued hermitian products: 
 \[_{\mc{A}^{\circ}}\lb \xi', \eta' \rb= \int_G V^{\circ}_{\eta'} \xi' \pi^{\circ}\iota(g)\,d\mu_G,   \quad            \lb \xi', \eta' \rb_{\C}= \lb \eta', \xi' \rb\]
\end{thm}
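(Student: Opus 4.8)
The plan is to transport the statement of Theorem~\ref{T:Morita1} across the Fourier transform. The Fourier transform $\mc{F}:L^2(\Gamma)\to L^2(\widehat{\Gamma})$ restricts to a linear isomorphism $\mc{S}(\Gamma)\to\mc{S}(\widehat{\Gamma})$ (the Schwartz spaces are mapped onto one another, and more generally one may invoke $S_0$ if $\Gamma$ is not elementary), and by the Theorem of Section~\ref{S:Schrodinger} it intertwines $\pi$ with $\pi^{\circ}\iota$. So I would first record that $\mc{F}$ carries the left $\mc{A}$-action \eqref{E:action} on $\mc{S}(\Gamma)$ to the asserted left $\mc{A}^{\circ}$-action on $\mc{S}(\widehat{\Gamma})$: for $k\in\mc{S}(G)$ and $\xi\in\mc{S}(\Gamma)$, $\mc{F}(\pi(k)\xi)=\int_G k(\gamma,t)\,\mc{F}(\pi(\gamma,t)\xi)\,d\mu_G=\int_G k(\gamma,t)\,\pi^{\circ}\iota(\gamma,t)\mc{F}(\xi)\,d\mu_G$, using Fubini exactly as in the Proposition just preceding Theorem~\ref{T:Morita1}. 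Likewise the right $\C$-action $\xi\cdot\lambda=\xi\bar\lambda$ is obviously preserved, since $\mc{F}$ is $\C$-linear.

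Next I would check the two hermitian products are intertwined. On the $\C$ side this is just the Plancherel identity $\lb\eta',\xi'\rb_{L^2(\widehat\Gamma)}=\lb\mc{F}^{-1}\eta',\mc{F}^{-1}\xi'\rb_{L^2(\Gamma)}$, so the $\C$-valued inner product on $\mc{S}(\widehat\Gamma)$ pulls back to the one on $\mc{S}(\Gamma)$ from Theorem~\ref{T:Morita1}. On the $\mc{A}^{\circ}$ side, the Proposition before Theorem~\ref{T:Morita1} (with $\xi,\eta,\psi$ replaced by $\mc{F}^{-1}\xi',\mc{F}^{-1}\eta',\mc{F}^{-1}\psi'$) gives $\mc{F}\big({}_{\mc{A}}\lb\mc{F}^{-1}\xi',\mc{F}^{-1}\eta'\rb\cdot\mc{F}^{-1}\psi'\big)={}_{\mc{A}^{\circ}}\lb\xi',\eta'\rb\cdot\psi'$; equivalently, under the algebra isomorphism $\mc{A}\to\mc{A}^{\circ}$, $\pi(a)\mapsto \mc{F}\pi(a)\mc{F}^{-1}=\pi^{\circ}\iota(a)$, the $\mc{A}$-valued product on $\mc{S}(\Gamma)$ corresponds to the claimed $\mc{A}^{\circ}$-valued product on $\mc{S}(\widehat\Gamma)$. (One should note $V^{\circ}_{\eta'}\xi'\in\mc{S}(G)$ so the integral defining ${}_{\mc{A}^{\circ}}\lb\cdot,\cdot\rb$ makes sense, as already observed above; this is again just $V_{\mc{F}^{-1}\eta'}(\mc{F}^{-1}\xi')\in\mc{S}(G)$ transported by Proposition on matrix coefficients.)

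Having matched actions and both inner products, the imprimitivity-bimodule axioms — compatibility of the two actions, the associativity condition ${}_{\mc{A}^{\circ}}\lb\xi',\eta'\rb\cdot\phi'=\xi'\cdot\lb\eta',\phi'\rb_{\C}$, positivity and density (fullness) of each range — transfer automatically, because $\mc{F}$ is a unitary equivalence of the whole data with the data of Theorem~\ref{T:Morita1}. So the proof reduces to saying: "$\mc{F}|_{\mc{S}(\Gamma)}$ is an isomorphism of equivalence bimodules from the bimodule of Theorem~\ref{T:Morita1} (with scalars twisted by the isomorphism $\mc{A}\cong\mc{A}^{\circ}$) onto the data described here; since Theorem~\ref{T:Morita1} holds, so does the present statement." I would spell out the intertwining computations for the two actions and the two products explicitly, and leave the remaining axioms to this transport principle.

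The only genuine point requiring care — the part I expect to be the main obstacle, if any — is the mapping property $\mc{F}(\mc{S}(\Gamma))=\mc{S}(\widehat{\Gamma})$ and the fact that $V^{\circ}_{\eta'}\xi'\in\mc{S}(G)$ in the stated generality of $\Gamma$ (elementary in the sense of Bruhat, or with a suitable differentiable structure); for general $\Gamma$ one passes to $S_0$, where $\mc{F}$ preserves the Feichtinger algebra and the STFT of $S_0$-functions lies in $S_0(G)$, as remarked at the start of this section. Everything else is a routine change of variables through Plancherel.
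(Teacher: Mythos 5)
Your argument is correct. Once you know that $\mc{F}$ restricts to a linear isomorphism $\mc{S}(\Gamma)\to\mc{S}(\widehat{\Gamma})$ and intertwines $\pi$ with $\pi^{\circ}\iota$ (the intertwining theorem of Section \ref{S:Schrodinger}), the map $\pi(a)\mapsto\mc{F}\pi(a)\mc{F}^{-1}=\pi^{\circ}\iota(a)$ is a $*$-isomorphism $\mc{A}\to\mc{A}^{\circ}$ matching the two left actions, the two algebra-valued products (via $V^{\circ}_{\widehat{\eta}}\widehat{\xi}=V_{\eta}\xi$, i.e. the matrix-coefficient proposition) and, by Plancherel, the scalar products; all imprimitivity axioms then transfer from Theorem \ref{T:Morita1}, exactly as you say. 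The paper organizes the proof differently, although the Fourier transform is still the engine: it verifies the axioms directly on the $\widehat{\Gamma}$-side, proving the associativity ${}_{\mc{A}^{\circ}}\lb\xi',\eta'\rb\cdot\psi'=\xi'\cdot\lb\eta',\psi'\rb_{\C}$ by pairing against an arbitrary $\phi'$ and invoking the dual Moyal identity $\lb V^{\circ}_{\eta'}\xi',V^{\circ}_{\psi'}\phi'\rb_{L^2(G)}=\lb\xi',\phi'\rb\lb\psi',\eta'\rb$, deducing positivity of ${}_{\mc{A}^{\circ}}\lb\eta',\eta'\rb$ from that same identity, obtaining fullness by the same Fourier transport of density that you use, and checking ${}_{\mc{A}^{\circ}}\lb\xi',\eta'\rb^{*}={}_{\mc{A}^{\circ}}\lb\eta',\xi'\rb$ from the formula $(V^{\circ}_{\eta'}\xi')^{*}=V^{\circ}_{\xi'}\eta'$. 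Your wholesale-transport version is shorter and makes the structural reason transparent (the dual bimodule is the Theorem \ref{T:Morita1} bimodule twisted by the isomorphism $\mc{A}\cong\mc{A}^{\circ}$), at the price of leaning entirely on Theorem \ref{T:Morita1} and the intertwining theorem; the paper's direct verification instead produces identities intrinsic to the dual side (the dual Moyal identity and the involution formula for $V^{\circ}$) which are reused later, for instance in showing that $V^{\circ}_{\xi}\xi$ is a projector in $L^1(G,\sigma)$. Your closing caveat, that $\mc{F}(\mc{S}(\Gamma))=\mc{S}(\widehat{\Gamma})$ and $V^{\circ}_{\eta'}\xi'\in\mc{S}(G)$ need the standing hypothesis that $\Gamma$ is elementary (or a passage to the Feichtinger algebra $S_0$), is the right point to flag and matches the paper's standing assumptions at the start of Section \ref{S:Heisenberg}.
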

\begin{proof}
It is important to observe  the associativity condition 
\begin{equation}
\lb \xi', \eta' \rb \cdot \psi'= \xi' \cdot \lb \eta', \psi '\rb.
\end{equation}
Taking the scalar product with an element $\phi'$ in $L^2(\widehat{\Gamma})$, this is equivalent to the identity
\[\begin{split}
\lb  \lb  \xi', \eta' \rb \cdot \psi', \phi' \rb&=\int_G V^{\circ}_{\eta'}\xi' (g)\lb \pi^{\circ}\iota (g)\psi', \phi' \rb d\mu_G\\
&=\int_G V^{\circ}_{\eta'}\xi' (g)\overline{\lb \phi', \pi^{\circ}\iota (g)\psi' \rb} d\mu_G\\
&=\lb V^{\circ}_{\eta'}\xi', V^{\circ}_{\psi'}\phi' \rb_{L^2(G)}\\
&= \lb \xi', \phi'\rb \overline{\lb \eta', \psi' \rb}\\
&=\lb \xi'\cdot \lb \eta', \psi' \rb, \phi' \rb
\end{split}
\]
It follows that $\lb \lb \eta' ,\eta' \rb \cdot \psi', \psi' \rb= \lb \eta', \psi' \rb \overline{\lb \eta', \psi' \rb} \ge 0$, so  $\lb \eta' ,\eta' \rb \ge 0 $ in $\mc{A}^{\circ}$.
Moreover, since $span\{ _\mc{A}\lb \xi, \eta \rb \mid \xi, \eta \in \mc{S}(\Gamma)\}$ is dense in $\mc{A}$ 
we see that the linear span of the short-time Fourier transforms $span\{V_{\eta}\xi \in L^1(G, \sigma)\}$ is dense in $L^1(G, \sigma)$. Thus so does $span\{ V^{\circ}_{\widehat{\eta}}\widehat{\xi} \mid \xi, \eta \in \mc{S}(\Gamma)\}$. Consequently, $\{V^{\circ}_{\eta'}\xi' \mid \eta', \xi' \in \mc{S}(\widehat{\Gamma})\}$ is dense in $L^1(G, \sigma)$ since all elements $\mc{S}(\widehat{\Gamma})$ comes from $\mc{S}(\Gamma)$ via Fourier transform. Other relations are easily checked; for instance, to check 
\[ _{\mc{A}^{\circ}}\lb \xi', \eta' \rb^*=  _{\mc{A}^{\circ}}\!\!\lb \eta', \xi' \rb\] we note that $_{\mc{A}^{\circ}}\lb \eta', \xi' \rb$ is $\pi^{\circ}\iota (V^{\circ}_{\eta'}\xi')$. Thus 
\[ _{\mc{A}^{\circ}}\!\lb \xi', \eta' \rb^*= \pi^{\circ}\iota ((V^{\circ}_{\eta'}\xi')^*)=\pi^{\circ}\iota (V^{\circ}_{\xi'}\eta')=_{\mc{A}^{\circ}}\!\!\lb \eta', \xi' \rb. \]
 \end{proof}
We note that there is a faithful trace $\Tr$ defined on both $\mc{A}$ and $\mc{A}^{\cc}$ given by 
\[\Tr (K) := k(0) \quad \text{ for $K=\pi (k)$ or $\pi^{\cc}\iota (k)$ where $k \in \mc{S}(G)$}.\] 
It satisfies $\Tr( k \natural l)=\Tr(l \natural k)$.\\ 

Now we consider a bona fide proper space of G.  Let $\Lambda$ be a  proper closed subgroup of $G=\widehat{\Gamma}\times \Gamma$. Once we fix the measures on $\Gamma$, $\widehat{\Gamma}$, and $\Lambda$, then the Haar measure $\mu_{G/\Lambda}$ on the quotiont group $G/\Lambda$  can be chosen that  for all $f\in L^1(G)$, 
\[\int_{G}f(g)dg=\int_{G/\Lambda}\!\int_{\Lambda}f(g+\lambda)d\mu_{\Lambda}(\lambda) d\dot{g} \quad \dot{g}=g+\Lambda \] holds which is called \emph{Weil's formula}.  With the uniquely determined measure 
$\mu_{G/\Lambda}$ we can define the size of $\Lambda$ or $\emph{covolume}$ of $\Lambda$, by $s(\Lambda)=\int_{G/\Lambda} 1 d\mu_{G/\Lambda}$. We are particularly interested in the case that $\Lambda$ is a discrete co-compact group or a lattice. In this case, $s(\Lambda) <  \infty$ is equal to the measure of any of its fundamental domains. The adjoint group of $\Lambda$ is the closed subgroup of $G$ given by 
\[ \Lambda^{\circ}=\{\lambda^{\circ}\mid \sigma_{sym}(\lambda^{\circ}, \lambda)=1 \quad \text{for all $\lambda \in \Lambda$} \}.\]

Then for $\Lambda$ a lattice we  consider two twisted Banach algebras $L^1(\Lambda, \sigma)$ and $L^1(\Lambda^{\circ}, \sigma^*)$ under the twisted convolution $\natural$ and the involution $^*$; for $\mf{a}=(a(\lambda)),\mf{b}=(b(\lambda)) \in L^1(\Lambda)$

\begin{equation}
\mathbf{a} \natural \mathbf{b}(\lambda)= \sum_{\nu \in \Lambda } a(\nu)b(\lambda-\nu)\sigma(\nu, \lambda-\nu)
\end{equation} 
and 
\begin{equation}
a^*(\lambda)=\sigma(\lambda, \lambda)\overline{a(-\lambda)} \quad \text{for $\lambda \in \Lambda$.}
\end{equation} 

Then $C^*(\Lambda, \sigma):=\mc{A}(\Lambda, \sigma)$ is the completion of $L^1(\Lambda, \sigma)$ under $\pi$. Similarly, $C^*(\Lambda^{\circ}, \sigma^*)=\mc{A}(\Lambda^{\cc}, \sigma^*)$ is the completion of $L^1(\Lambda^{\circ}, \sigma^*)$ under $\pi^{\circ}$ or $\pi^{*}$. We note that  for a co-compact $\Lambda$  the orthogonal measure $\mu_{\Lambda^{\circ}}$ 
satisfies 
\begin{equation}
\int_{\Lambda^{\circ}} a(\lambda^{\circ})d\mu_{\Lambda^{\circ}}(\lambda^{}\circ)=\frac{1}{s(\Lambda)}\sum_{\lambda^{\cc}\in \Lambda^{\cc}}a(\lambda^{\cc})
\end{equation} 
for $\mf{a} \in L^1(\Lambda^{\cc})$. 
Then we define a left action of $\mc{A}(\Lambda, \sigma)$ on $\mc{S}(\Gamma)$ as usual (see (\ref{E:action})) and a right action of $\mc{B}=\mc{A}(\Lambda^{\cc}, \sigma^*)$ as
\begin{equation}
\xi \cdot \mf{b}= \pi^*(\mf{b})(\xi)=\sum_{\lambda^{\cc}\in \Lambda^{\cc}} b(\lambda^{\cc})\pi^*(\lambda^{\cc})\xi
\end{equation}
for $\mf{b}\in L^1(\Lambda^{\circ})$ and a $\mc{B}$-valued hermitian product 
\begin{equation}
\lb \xi, \eta \rb_{\mc{B}}= \frac{1}{s(\Lambda)}\sum_{\lambda^{\cc} \in \Lambda^{\cc}}\lb \pi(\lambda^{\cc})\eta, \xi \rb \pi^*(\lambda^{\cc}). 
\end{equation}

Then $\mc{S}(\Gamma)$ still serves as an equivalence bimodule between $\mc{A}$ and $\mc{B}$ that are constructed by a lattice $\Lambda$ and the Schr\"{o}dinger representation. We note that a lattice or a proper subgroup $\Lambda$ plays a central role in signal analysis (see \cite{JL:Duality} for instance) and the following fact is fundamental in both operator algebras and Gabor analysis.
  
\begin{thm}(Rieffel)\label{T:Morita2}
 $\mc{S}(\Gamma)$ is an equivalence bimodule between $\mathcal{A}(\Lambda, \sigma)$ and $\mathcal{A}(\Lambda^{\circ}, \sigma^*)$. In particular, if we denote the quantization of $\mc{S}(\Lambda)$ under $\pi$ by $\mc{A}_{\infty}(\Lambda, \sigma)$ and the quantization of $\mc{S}(\Lambda^{\cc})$ under $\pi^*$ by $\mc{A}_{\infty}(\Lambda^{\circ}, \sigma^*) $ respectively, then still $\mc{S}(\Gamma)$ is an equivalence bimodule between $\mc{A}_{\infty}(\Lambda, \sigma)$ and $\mc{A}_{\infty}(\Lambda^{\circ}, \sigma^*)$. 
 \end{thm}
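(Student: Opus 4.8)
The plan is to verify that $\mc{S}(\Gamma)$, with the indicated actions and the two hermitian products, is an equivalence bimodule between $\mc{A}(\Lambda,\sigma)$ and $\mc{A}(\Lambda^{\cc},\sigma^*)$ in Rieffel's sense, running the same template as Theorem~\ref{T:Morita1} and its Fourier--dual analogue above. First I would set up the algebraic scaffolding: $g\mapsto\pi(g)$ restricted to $\Lambda$ is a $\sigma$-projective representation, $g\mapsto\pi^*(g)=\pi(g)^*$ restricted to $\Lambda^{\cc}$ is a $\sigma^*$-projective representation, and the two families of operators commute, because $\pi(g_1)\pi(g_2)=\sigma_{sym}(g_1,g_2)\pi(g_2)\pi(g_1)$ with $\sigma_{sym}(g_1,g_2)=\sigma(g_1,g_2)\overline{\sigma(g_2,g_1)}$, while $\sigma_{sym}(\lambda^{\cc},\lambda)=1$ for all $\lambda\in\Lambda$, $\lambda^{\cc}\in\Lambda^{\cc}$ by the very definition of $\Lambda^{\cc}$ (recall also that $\Lambda^{\cc}$ is again a lattice and that $\mu_{\Lambda^{\cc}}$ is $\tfrac{1}{s(\Lambda)}$ times counting measure). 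Hence the left $\mc{A}(\Lambda,\sigma)$-action and the right $\mc{A}(\Lambda^{\cc},\sigma^*)$-action are well defined and commute. Since $V_{\eta}\xi\in\mc{S}(G)$, its restriction to the lattice $\Lambda$ is rapidly decreasing, so $V_{\eta}\xi|_{\Lambda}\in\ell^1(\Lambda,\sigma)$ and $_{\mc{A}}\lb\xi,\eta\rb\in\mc{A}(\Lambda,\sigma)$; likewise $\lb\xi,\eta\rb_{\mc{B}}\in\mc{B}$, and $\mc{S}(\Gamma)$ is stable under both actions. The bookkeeping identities $_{\mc{A}}\lb\xi,\eta\rb^*={}_{\mc{A}}\lb\eta,\xi\rb$, $\lb\xi,\eta\rb_{\mc{B}}^*=\lb\eta,\xi\rb_{\mc{B}}$, and the module-linearity of the two products follow from (\ref{E:twistedinvo}) and the adjoint relations for $\pi$ and $\pi^*$ exactly as in the continuous case.

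The crux is the associativity (compatibility) condition
\[ {}_{\mc{A}}\lb\xi,\eta\rb\cdot\psi=\xi\cdot\lb\eta,\psi\rb_{\mc{B}},\qquad \xi,\eta,\psi\in\mc{S}(\Gamma), \]
which spelled out is the identity
\[ \sum_{\lambda\in\Lambda}\lb\xi,\pi(\lambda)\eta\rb\,\pi(\lambda)\psi=\frac{1}{s(\Lambda)}\sum_{\lambda^{\cc}\in\Lambda^{\cc}}\lb\pi(\lambda^{\cc})\psi,\eta\rb\,\pi^*(\lambda^{\cc})\xi. \]
This is the Fundamental Identity of Gabor Analysis, and I would prove it by Poisson summation. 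Pairing both sides against an arbitrary $\phi\in\mc{S}(\Gamma)$ turns the left side into $\sum_{\lambda\in\Lambda}F(\lambda)$, where $F(g)=V_{\eta}\xi(g)\,\overline{V_{\psi}\phi(g)}$ is a product of two short-time Fourier transforms and hence lies in $\mc{S}(G)\subset L^1(G)$ by the Moyal identity and the standard $\mathrm{STFT}$ estimates; Weil's formula for $G\to G/\Lambda$, together with the identification of the dual of $G/\Lambda$ with $\Lambda^{\cc}$ effected by the symplectic cocycle $\sigma_{sym}$, then rewrites $\sum_{\lambda\in\Lambda}F(\lambda)$ as $\frac{1}{s(\Lambda)}\sum_{\lambda^{\cc}\in\Lambda^{\cc}}\widehat{F}(\lambda^{\cc})$, and unwinding $\widehat{F}$ reproduces the right-hand side paired against $\phi$. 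I expect this to be the main obstacle: one must run Poisson summation carefully, tracking the twists $\sigma|_{\Lambda}$ and $\sigma^*|_{\Lambda^{\cc}}$ so that the twisted convolution on the left matches the twisted sum on the right, together with the normalization $\tfrac{1}{s(\Lambda)}$, which is exactly the measure relation recalled above; absolute convergence on both sides is guaranteed by $F\in\mc{S}(G)$ (equivalently, by the Bessel property of Gabor systems with Schwartz windows).

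Granting the associativity identity, the remaining axioms go as in the proof of Theorem~\ref{T:Morita1}. For positivity, reading that identity with $\psi$ free and $\eta=\xi$ gives the Janssen-type representation
\[ {}_{\mc{A}}\lb\xi,\xi\rb\,\psi=\frac{1}{s(\Lambda)}\sum_{\lambda^{\cc}\in\Lambda^{\cc}}\lb\psi,\pi^*(\lambda^{\cc})\xi\rb\,\pi^*(\lambda^{\cc})\xi, \]
so $_{\mc{A}}\lb\xi,\xi\rb$ is $\tfrac{1}{s(\Lambda)}$ times the bounded frame operator of $\xi$ relative to $\pi^*$ and $\Lambda^{\cc}$, a positive operator on $L^2(\Gamma)$; since $\mc{A}(\Lambda,\sigma)$ is by construction the completion of $\ell^1(\Lambda,\sigma)$ under $\pi$, this forces $_{\mc{A}}\lb\xi,\xi\rb\ge 0$ in $\mc{A}(\Lambda,\sigma)$, and the mirror computation (reading associativity with $\xi=\eta$ and the roles of $\Lambda,\Lambda^{\cc}$ exchanged) gives $\lb\xi,\xi\rb_{\mc{B}}\ge 0$ in $\mc{B}$. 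For fullness one needs the closed linear span of $\{{}_{\mc{A}}\lb\xi,\eta\rb:\xi,\eta\in\mc{S}(\Gamma)\}$ to be all of $\mc{A}(\Lambda,\sigma)$ and symmetrically for $\mc{B}$; since $_{\mc{A}}\lb\xi,\eta\rb=\pi(V_{\eta}\xi|_{\Lambda})$, this reduces to density of $\Span\{V_{\eta}\xi|_{\Lambda}\}$ in $\ell^1(\Lambda,\sigma)$, which follows by periodization from the density of $\Span\{V_{\eta}\xi\}$ in $L^1(G,\sigma)$ invoked in the previous theorem, or directly from the fact that the time-frequency shifts of the single nonzero Schwartz function $V_{\eta}\eta$ span a dense subspace. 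Finally, the ``in particular'' assertion is immediate: $\mc{A}_{\infty}(\Lambda,\sigma)$ and $\mc{A}_{\infty}(\Lambda^{\cc},\sigma^*)$ are by definition the images under $\pi,\pi^*$ of $\mc{S}(\Lambda),\mc{S}(\Lambda^{\cc})$, and $V_{\eta}\xi|_{\Lambda}\in\mc{S}(\Lambda)$ (resp.\ $V^{\circ}_{\eta'}\xi'|_{\Lambda^{\cc}}\in\mc{S}(\Lambda^{\cc})$) for Schwartz data, so all of the structure maps restrict to the smooth subalgebras. This recovers Rieffel's theorem, with the explicit computations extractable from \cite{DLL:Sigma}.
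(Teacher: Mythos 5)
The paper offers no argument of its own for this theorem beyond the citation to Rieffel's result, and your sketch is essentially that standard cited proof: restrict $\pi$ to $\Lambda$ and $\pi^{*}=\pi(\cdot)^{*}$ to $\Lambda^{\circ}$ (the two actions commuting by the definition of the adjoint lattice), establish the associativity identity (the fundamental identity of Gabor analysis) by pairing with a fourth Schwartz vector and applying Weil's formula/Poisson summation with the symplectic identification of the dual of $G/\Lambda$ with $\Lambda^{\circ}$, deduce positivity from the resulting frame-operator (Janssen) representation, and get fullness from density of the restricted short-time Fourier transforms, with the smooth-subalgebra statement following because $V_{\eta}\xi|_{\Lambda}$ is rapidly decreasing for Schwartz data. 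The one step I would tighten is fullness: density of $\operatorname{span}\{V_{\eta}\xi|_{\Lambda}\}$ in $\ell^{1}(\Lambda,\sigma)$ does not follow by ``periodization'' from density of $\operatorname{span}\{V_{\eta}\xi\}$ in $L^{1}(G,\sigma)$ (restriction to a lattice is not controlled by $L^{1}$-density on $G$); instead approximate each point mass $\delta_{\lambda}$ in $\ell^{1}(\Lambda)$ directly by $V_{\eta}\xi|_{\Lambda}$ with suitably concentrated Schwartz (e.g.\ Gaussian) windows, which is how the cited argument proceeds.
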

\begin{proof}
See \cite[Proposition 3.2]{R:Morita}.
\end{proof}

Using the dual Schr\"{o}dinger representation we can construct (Fourier dual) $C\sp*$-algebras $\mc{A}^{\cc}(\Lambda, \sigma)$ and $\mc{A}^{\circ}(\Lambda^{\circ}, \sigma^*)$ from $L^1(\Lambda, \sigma)$. 
Let $\mc{A}^{\cc}= \mc{A}^{
\cc}(\Lambda, \sigma)$ and $\mc{B}^{\cc}=\mc{A}^{\circ}(\Lambda^{\circ}, \sigma^*)$. Then on $\mc{S}(\widehat{\Gamma})$
a right action of $\mc{B}^{\cc}$ is given by
\begin{equation}
\xi' \cdot \mf{b}= (\pi^{\circ} \iota)^*(\mf{b})(\xi)=\sum_{\lambda^{\cc}\in \Lambda^{\cc}} b(\lambda^{\cc})(\pi^{\circ}\iota) ^*(\lambda^{\cc})\xi
\end{equation}
for $\mf{b} \in L^1(\Lambda^{\cc})$ and a $\mc{B}^{\cc}$-valued hermitian product 
\begin{equation}
\lb \xi', \eta' \rb_{\mc{B}}= \frac{1}{s(\Lambda)}\sum_{\lambda^{\cc} \in \Lambda^{\cc}}\lb \pi^{\cc}\iota (\lambda^{\cc})\eta', \xi' \rb (\pi^{\cc}\iota)^*(\lambda^{\cc}) 
\end{equation}

\begin{thm}\label{T:Morita3}
 $\mc{S}(\widehat{\Gamma})$ is an equivalence bimodule between $\mc{A}^{\cc}$ and $\mc{B}^{\cc}$. In particular, if we denote the quantization of $\mc{S}(\Lambda)$ under $\pi^{\cc}\iota$ by $\mc{A}_{\infty}^{\cc}$ and the quantization of $\mc{S}(\Lambda^{\cc})$ under $(\pi^{\cc}\iota)^*$ by $\mc{B}_{\infty}^{\cc}$ respectively, then still $\mc{S}(\Gamma)$ is an equivalence bimodule between $\mc{A}_{\infty}^{\circ}$ and $\mc{B}_{\infty}^{\cc}$.
 \end{thm}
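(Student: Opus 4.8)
The plan is to deduce the theorem from Rieffel's Theorem \ref{T:Morita2} by transporting the entire equivalence bimodule structure along the Fourier transform, using the intertwining relation $\mc{F}\,\pi(g)=\pi^{\cc}\iota(g)\,\mc{F}$ established in Section \ref{S:Schrodinger}. Since $\Gamma$ is elementary in the sense of Bruhat, $\mc{F}$ restricts to a linear isomorphism $\mc{S}(\Gamma)\to\mc{S}(\widehat{\Gamma})$ that is simultaneously a unitary $L^2(\Gamma)\to L^2(\widehat{\Gamma})$; this is the unitary along which everything is moved. Restricting the intertwining identity to $\lambda\in\Lambda$ and to $\lambda^{\cc}\in\Lambda^{\cc}$ and passing to Hilbert-space adjoints (legitimate since $\mc{F}$ is unitary, using the Lemma $(\pi^{\cc}\iota(g))^{*}=\sigma(g,g)\pi^{\cc}\iota(g^{-1})$ together with its counterpart (\ref{E:adjoint}) for $\pi$) we also obtain $\mc{F}\,\pi^{*}(\lambda^{\cc})=(\pi^{\cc}\iota)^{*}(\lambda^{\cc})\,\mc{F}$ on $\Lambda^{\cc}$.

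The next step is to check that $\mc{F}$ carries each piece of the datum $\big(\mc{A}(\Lambda,\sigma),\,\mc{S}(\Gamma),\,\mc{B}\big)$ onto the corresponding piece of $\big(\mc{A}^{\cc},\,\mc{S}(\widehat{\Gamma}),\,\mc{B}^{\cc}\big)$. Integrating $\mc{F}\pi(\lambda)=\pi^{\cc}\iota(\lambda)\mc{F}$ against $\mf{a}\in L^{1}(\Lambda,\sigma)$ shows $\mc{F}(\mf{a}\cdot\xi)=\mf{a}\cdot\mc{F}(\xi)$, so conjugation by $\mc{F}$ is a $*$-isomorphism $\mc{A}(\Lambda,\sigma)\to\mc{A}^{\cc}$ intertwining the left actions; summing $\mc{F}\pi^{*}(\lambda^{\cc})=(\pi^{\cc}\iota)^{*}(\lambda^{\cc})\mc{F}$ against $\mf{b}\in L^{1}(\Lambda^{\cc})$ shows $\mc{F}(\xi\cdot\mf{b})=\mc{F}(\xi)\cdot\mf{b}$, identifying $\mc{B}$ with $\mc{B}^{\cc}$; and each summand $\lb\pi(\lambda^{\cc})\eta,\xi\rb\,\pi^{*}(\lambda^{\cc})$, resp.\ $\lb\xi,\pi(\lambda)\eta\rb\,\pi(\lambda)$, of the two hermitian products corresponds under conjugation by $\mc{F}$ to $\lb\pi^{\cc}\iota(\lambda^{\cc})\mc{F}\eta,\mc{F}\xi\rb\,(\pi^{\cc}\iota)^{*}(\lambda^{\cc})$, resp.\ $\lb\mc{F}\xi,\pi^{\cc}\iota(\lambda)\mc{F}\eta\rb\,\pi^{\cc}\iota(\lambda)$, because $\mc{F}$ is unitary (so the scalar coefficients are unchanged, cf.\ the Proposition $\lb\xi,\pi(\gamma,t)\eta\rb=\lb\widehat{\xi},\pi^{\cc}\iota(\gamma,t)\widehat{\eta}\rb$) and the normalizing factors $\tfrac1{s(\Lambda)}$ are the same on both sides. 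Hence $\mc{F}\,\lb\xi,\eta\rb_{\mc{B}}\,\mc{F}^{-1}=\lb\mc{F}\xi,\mc{F}\eta\rb_{\mc{B}^{\cc}}$ and likewise for the left product.

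With all data matched, every axiom of an equivalence bimodule — the commuting bimodule property, the two compatibility identities relating the $\mc{A}^{\cc}$- and $\mc{B}^{\cc}$-valued products, positivity of both products, and fullness of both — transfers from Rieffel's theorem, since $\mc{F}$ is a unitary isomorphism carrying each structure to its dual counterpart; fullness in particular persists because $\mc{F}$ maps the dense span of $\{V_{\eta}\xi\}$ in $L^{1}(\Lambda,\sigma)$ onto the dense span of the corresponding dual matrix coefficients. Restricting $\mc{F}$ to $\mc{S}(\Lambda)\subset L^{1}(\Lambda)$ and to $\mc{S}(\Lambda^{\cc})\subset L^{1}(\Lambda^{\cc})$, the same conjugation identifies $\mc{A}_{\infty}(\Lambda,\sigma)$ with $\mc{A}^{\cc}_{\infty}$ and $\mc{A}_{\infty}(\Lambda^{\cc},\sigma^{*})$ with $\mc{B}^{\cc}_{\infty}$, so the smooth refinement follows from the smooth refinement of Rieffel's theorem, with $\mc{S}(\widehat{\Gamma})$ the relevant bimodule. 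The only genuinely delicate point is the adjoint computation $\mc{F}\pi^{*}(\lambda^{\cc})\mc{F}^{-1}=(\pi^{\cc}\iota)^{*}(\lambda^{\cc})$: one must be consistent about whether $\pi^{*}$ and $(\pi^{\cc}\iota)^{*}$ denote the Hilbert-space adjoints or the $\sigma^{*}$-projective representations $\pi^{\cc}$ and $\pi^{\cc}\iota$ restricted to $\Lambda^{\cc}$; these differ only by the scalars $\sigma(\lambda^{\cc},\lambda^{\cc})$, which commute with $\mc{F}$, so the identity is valid under either reading, and everything else is bookkeeping inherited from Rieffel.
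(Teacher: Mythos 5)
Your proposal is correct and takes essentially the same route as the paper: both deduce the result from Rieffel's Theorem \ref{T:Morita2} by transporting the bimodule structure along the unitary Fourier transform $\mc{F}:\mc{S}(\Gamma)\to\mc{S}(\widehat{\Gamma})$, which intertwines $\pi$ with $\pi^{\cc}\iota$ and, after taking Hilbert-space adjoints, $\pi^{*}$ with $(\pi^{\cc}\iota)^{*}$. The only difference is one of bookkeeping: the paper writes out explicitly just the associativity identity via this transport (pairing against $\widehat{\phi}$ and using unitarity of $\mc{F}$, with the remaining axioms cited as standard), whereas you carry over the whole datum by conjugation at once.
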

\begin{proof}
Other relations are standard and can be proven easily following the proof of Theorem \ref{T:Morita1}\, or Section 2 in \cite{R:Morita}. We show the associativity relation 
\[ _{\mc{A}^{\cc}}\!\lb \xi', \eta' \rb \cdot  \zeta' = \xi' \cdot \lb \eta', \zeta' \rb_{\mc{B}^{\cc}} \quad \text{for $\xi', \eta', \zeta' \in \mc{S}(\widehat{\Gamma})$. } \] 
Since $\mc{F}:\mc{S}(\Gamma) \to \mc{S}(\widehat{\Gamma})$ is an isometric isomorphism, it is enough to show that for $\xi, \eta, \zeta \in \mc{S}(\Gamma)$
\[ _{\mc{A}^{\cc}}\!\lb \widehat{\xi}, \widehat{\eta} \rb \cdot  \widehat{\zeta} = \widehat{\xi} \cdot \lb \widehat{\eta}, \widehat{\zeta} \rb_{\mc{B}^{\cc}}. \]
Indeed, for $ \phi \in \mc{S}(\widehat{\Gamma})$
\[ 
\begin{split}
\lb \widehat{\xi} \cdot \lb \widehat{\eta}, \widehat{\zeta} \rb_{\mc{B}^{\cc}}, \widehat{\phi}\rb &=\frac{1}{s(\Lambda)}\sum_{\lambda^{\cc}\in \Lambda^{\cc}} \lb \pi^{\cc}\iota(\lambda^{\cc})\widehat{\zeta},  \widehat{\eta}\rb \lb \widehat{\xi}, \pi^{\cc}\iota(\lambda^{\cc}) \widehat{\phi} \rb \\ 
&=\frac{1}{s(\Lambda)}\sum_{\lambda^{\cc}\in \Lambda^{\cc}} \lb \mc{F} (\pi(\lambda^{\cc})\zeta),  \mc{F}(\eta)\rb \lb \mc{F}(\xi), \mc{F} (\pi (\lambda^{\cc})\phi) \rb \\ 
&=\frac{1}{s(\Lambda)}\sum_{\lambda^{\cc}\in \Lambda^{\cc}} \lb (\pi(\lambda^{\cc})\zeta,  \eta \rb \lb \xi, \pi (\lambda^{\cc})\phi \rb\\ 
&=\lb \xi \cdot \lb \eta, \zeta \rb_{\mc{B}}, \phi \rb\\
&=\lb _{\mc{A}}\lb\xi, \eta \rb \cdot \zeta  , \phi\rb\\
&=\lb \mc{F}( _{\mc{A}}\lb \xi, \eta \rb \cdot \zeta ), \mc{F}(\phi) \rb\\
&=\lb _{\mc{A}^{\cc}}\lb \widehat{\xi}, \widehat{\eta} \rb \cdot \widehat{\zeta} , \widehat{\phi} \rb
\end{split}
\]
\end{proof}

We also have faithful traces $\Tr_\mc{A}$ and $\Tr_{\mc{B}}$ defined on $\mc{A}(\mc{A}^{\cc})$ and $\mc{B}(\mc{B}^{\cc})$ respectively ; for $a\in L^1(\Lambda, \sigma)$ and $b\in L^1(\Lambda^{\cc}, \sigma*)$
  \[ \Tr_{\mc{A}}(a)=a(0), \quad \Tr_{\mc{B}}(b)=s(\Lambda)b(0). \]
\begin{prop}
The following equality holds;
\[\Tr_{\mc{A}}(_{\bullet}\lb \xi, \eta \rb)= \Tr_{\mc{B}}(\lb \eta, \xi \rb_{\bullet}).\]
\end{prop}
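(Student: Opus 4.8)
The plan is to reduce the proposed identity, via the Moyal-type/associativity machinery already established, to the equality of two scalar quantities and then match them by evaluating the relevant short-time Fourier transforms at the identity $0 \in G$. Concretely, the proposition is stated for both the $\pi$-picture (where $\Tr_{\mc{A}}$, $\Tr_{\mc{B}}$ act on $\mc{A}(\Lambda,\sigma)$, $\mc{B}=\mc{A}(\Lambda^{\cc},\sigma^*)$) and the dual $\pi^{\cc}\iota$-picture, but since $\mc{F}\colon \mc{S}(\Gamma)\to\mc{S}(\widehat{\Gamma})$ is an isometric isomorphism intertwining the two pictures (Theorem on the Fourier transform and Proposition on $V^{\cc}_{\eta'}\xi'$), it suffices to prove the statement in the $\pi$-picture; the dual case follows by applying $\mc{F}$ to both arguments, exactly as in the proof of Theorem~\ref{T:Morita3}.

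First I would unwind the definitions. By construction $_{\mc{A}}\lb\xi,\eta\rb=\pi(V_\eta\xi)$ with $V_\eta\xi(g)=\lb\xi,\pi(g)\eta\rb$, so $\Tr_{\mc{A}}(_{\mc{A}}\lb\xi,\eta\rb)=V_\eta\xi(0)=\lb\xi,\pi(0)\eta\rb=\lb\xi,\eta\rb_{L^2(\Gamma)}$. On the other side, $\lb\eta,\xi\rb_{\mc{B}}=\frac{1}{s(\Lambda)}\sum_{\lambda^{\cc}\in\Lambda^{\cc}}\lb\pi(\lambda^{\cc})\xi,\eta\rb\,\pi^*(\lambda^{\cc})$, which as an element of $L^1(\Lambda^{\cc},\sigma^*)$ is the sequence $\lambda^{\cc}\mapsto \frac{1}{s(\Lambda)}\lb\pi(\lambda^{\cc})\xi,\eta\rb$ (up to the normalization conventions fixed in the excerpt for $\mu_{\Lambda^{\cc}}$). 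Hence $\Tr_{\mc{B}}(\lb\eta,\xi\rb_{\mc{B}})=s(\Lambda)\cdot\frac{1}{s(\Lambda)}\lb\pi(0)\xi,\eta\rb=\lb\xi,\eta\rb_{L^2(\Gamma)}$. The two sides therefore agree.

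The one point that needs genuine care—and is the main obstacle—is bookkeeping the normalization constants: the covolume $s(\Lambda)$ enters the definition of $\lb\cdot,\cdot\rb_{\mc{B}}$ with a $\frac{1}{s(\Lambda)}$, enters $\Tr_{\mc{B}}$ with a factor $s(\Lambda)$, and is also implicit in the choice of orthogonal measure $\mu_{\Lambda^{\cc}}$ via the formula $\int_{\Lambda^{\cc}}a\,d\mu_{\Lambda^{\cc}}=\frac{1}{s(\Lambda)}\sum_{\lambda^{\cc}}a(\lambda^{\cc})$. I would verify that, with the conventions fixed in the excerpt, these factors cancel precisely so that both traces reduce to $\lb\xi,\eta\rb_{L^2(\Gamma)}$ (equivalently, to $\overline{\lb\eta,\xi\rb_{\C}}$ in the $\C$-valued inner product normalization). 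An alternative, perhaps cleaner, route avoiding any constant-chasing is to invoke the general fact that on an equivalence bimodule with faithful traces $\Tr_{\mc{A}}$, $\Tr_{\mc{B}}$ one always has $\Tr_{\mc{A}}(_{\mc{A}}\lb\xi,\eta\rb)=\Tr_{\mc{B}}(\lb\eta,\xi\rb_{\mc{B}})$ whenever the traces are normalized compatibly with the coupling constant of the Morita equivalence; by Theorem~\ref{T:Morita2} (Rieffel) $\mc{S}(\Gamma)$ is such a bimodule, and $s(\Lambda)$ is exactly that coupling constant, so the displayed normalizations of $\Tr_{\mc{A}}$ and $\Tr_{\mc{B}}$ are the matched ones and the identity is automatic.

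Finally I would remark that the bullet notation $_{\bullet}\lb\xi,\eta\rb$ and $\lb\eta,\xi\rb_{\bullet}$ in the statement is shorthand covering both the $\mc{A}/\mc{B}$ and the $\mc{A}^{\cc}/\mc{B}^{\cc}$ cases simultaneously, and the Fourier-reduction argument in the first paragraph shows the $\bullet=\cc$ case is literally equivalent to the $\bullet=\text{(plain)}$ case, so a single computation suffices.
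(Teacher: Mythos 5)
Your proposal is correct and follows essentially the same route as the paper: evaluate $_{\bullet}\lb \xi,\eta\rb$ and $\lb\eta,\xi\rb_{\bullet}$ at the identity of the (dual) lattice and observe that the factor $s(\Lambda)$ in $\Tr_{\mc{B}}$ cancels the $\tfrac{1}{s(\Lambda)}$ in the definition of $\lb\cdot,\cdot\rb_{\mc{B}}$, so both sides equal $\lb\xi,\eta\rb_{L^2}$. The additional remarks on the Fourier-dual picture and the abstract Morita normalization are fine but not needed beyond this direct computation.
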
  
\begin{proof}
It is evident that $\Tr_{\mc{A}}(_{\bullet}\lb \xi, \eta \rb)=\lb \xi, \eta \rb $ and 
\[  \Tr_{\mc{B}}(\lb \eta, \xi \rb_{\bullet})=s(\Lambda)  \frac{1}{s(\Lambda)} \lb \xi, \eta \rb=\lb \xi, \eta \rb.\]
\end{proof}

\section{Applications to solitons over noncommutative tori}\label{S:solitons}
From now on we assume that there is an infinitesimal action of  the 2-dimensional torus $\mb{T}^2$  on  both $\mc{A}$ and $\mc{A}^{\circ}$ which derives derivations $\partial_1$ and $\partial_2$ on both $\mc{A}$ and $\mc{A}^{\circ}$ (indeed, for $\Gamma=\mb{R}$  see \cite{L:Gabor}, or for $\Gamma=\mb{R}\times \Z_{q}$  see \cite{DJLL:Sigma}).  Also we assume that  the trace $\Tr_{\mc{A}} $ is invariant under the action which means that for any $K\in \mc{A}$ or $\mc{A}^{\cc}$
\begin{equation}
\Tr_{\mc{A}}(\partial_i K)=0, \quad j=1,2. 
\end{equation}
Moreover, we assume that on the equivalence bimodule $\mc{S}(\Gamma)$ and  $\mc{S}(\widehat{\Gamma})$ there is a connection via covariant derivatives $\nabla_1$ and $\nabla_2$ which commute with Fourier transform up to a scalar; 
\begin{equation}
\mc{F} \circ \nabla_{i}\equiv \nabla_{i+1}\circ \mc{F} \, (\text{mod 2})\, \text{for $i=1,2$}
\end{equation} 

We recall that the covariant derivatives satisfy the Leibniz rule and are compatible with the hermitian structure:
\begin{equation}
\nabla_j(K\cdot \xi)=(\partial_j K)\cdot \xi +K\cdot(\nabla_j \xi), \quad j=1,2
\end{equation}
\begin{equation}
\partial_j(_{\bullet}\lb \xi, \eta \rb)=_{\bullet}\!\!\lb \nabla_j \xi, \eta \rb+_{\bullet}\!\lb \xi, \nabla_j \eta \rb.
\end{equation}
On the other hand, the $L^2(\Gamma)(L^2(\widehat{\Gamma}))$-scalar product leads to the compatibility with respect to the right hermitian structure once we assume that ``integration by parts'' holds; 
\[ \int_{\Gamma} \nabla_j \xi (s)\overline{\eta}(s) ds= -\int_{\Gamma}\xi (s)\overline{\nabla_j \eta }(s) ds.\] 

If the right algebra is just $\C$, then the right Leibniz rule for the connection is automatic. Using the holomorphic structure $\overline{\partial}=\partial_1+i \partial_2$ and $\partial=\partial_1-i\partial_2$ we also consider the anti-holomorphic connection $\overline{\nabla}=\nabla_1+i\nabla_2$ and the holomorphic connection $\nabla=\nabla_1-i\nabla_2$ which are compatible with anti-holomorphic derivation and holomorphic derivation respectively. 

From the perspective of noncommutative analogues of non-linear (Bosonic) sigma models we are lead to consider the anti-self duality equation or the self duality equation for projections $p$ in $\mc{A}$ or $\mc{A}^{\circ}$ \cite{DKL:Sigma, DKL:Sigma2}; whenever $\mc{A}$, thereby $\mc{A}^{\cc}$, has sufficient projections, we can think of $*$-homomorphisms from the algebra of functions over a two-point set to a bona fide noncommutative space $\mc{A}$ or $\mc{A}^{\cc}$ and seek stable maps under the Polyakov type action functional. Such maps, in particular minimizing stable maps  correspond to the projections satisfying the either  the anti-self duality equation
\begin{equation}\label{E:antiselfdual}
(\partial p)p=0, 
\end{equation}
or the self duality equation
\begin{equation}\label{E:selfdual}
(\overline{\partial}p)p=0.
\end{equation}

In general, with in mind the discrete case,  if there is an equivalence bimodule $\mc{E}$ for the left algebra $\mc{A}$ and the right algebra $\mc{B}$, both of which are domains for the derivations  $\partial_i: \mc{A} \to \mc{A}$ and $\partial'_i: \mc{B} \to \mc{B}$ and connections $\nabla_j: \mc{E} \to \mc{E}$ on $\mc{E}$ which are compatible with both derivations, then the following is true. 
\begin{prop}\cite[Proposition 3.5]{DLL:Sigma}\label{P:solitons}
Let $\psi \in \mc{E}$ be such that $\lb \psi, \psi \rb_{\bullet}=1_{\mc{B}}$ with $p_{\psi}:=_{\bullet}\!\lb \psi, \psi \rb \in \mc{A}$ the corresponding projection. Let $\overline{\nabla}$ be the anti-holomorphic connection on $\mc{E}$. Then the projection $p_{\psi}$ is a solution of the self-duality equation of (\ref{E:selfdual}) or the anti-self duality equation of (\ref{E:antiselfdual}) respectively
\begin{equation}
\overline{\partial}(p_{\psi})p_{\psi}=0,\text{or}\quad  \partial(p_{\psi})p_{\psi}=0
\end{equation}
if and only if the $\psi$ is a generalized eigenvector of $\overline{\nabla}$ or $\nabla$ respectively, i.e. there exists $b\in \mc{B}$ such that 
\begin{equation}
\overline{\nabla}\psi=\psi \cdot b, \text{or}\quad \nabla(\psi)=\psi\cdot b.
\end{equation}
\end{prop}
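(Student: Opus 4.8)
The plan is to reduce the claim to a direct computation linking the projection $p_\psi = {}_{\bullet}\!\lb \psi,\psi\rb$ to the connection $\overline{\nabla}$, exploiting the two Leibniz rules and the normalization $\lb \psi,\psi\rb_{\bullet} = 1_{\mc{B}}$. First I would record the key algebraic consequence of normalization: for any $\zeta \in \mc{E}$ one has $p_\psi \cdot \zeta = {}_{\bullet}\!\lb \psi,\psi\rb \cdot \zeta = \psi \cdot \lb \psi,\zeta\rb_{\bullet}$ by the associativity relation, and in particular $p_\psi \cdot \psi = \psi \cdot \lb \psi,\psi\rb_{\bullet} = \psi$. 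So $p_\psi$ acts as the identity on $\psi$, and $p_\psi^2 = p_\psi$ follows from associativity again. This is the structural backbone; everything else is differentiating these identities.

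Next I would apply $\overline{\partial} = \partial_1 + i\partial_2$ to $p_\psi = {}_{\bullet}\!\lb \psi,\psi\rb$ using the Leibniz rule for the left hermitian structure, obtaining
\begin{equation}
\overline{\partial}(p_\psi) = {}_{\bullet}\!\lb \overline{\nabla}\psi,\psi\rb + {}_{\bullet}\!\lb \psi, \overline{\nabla}\psi\rb,
\end{equation}
taking care with the conjugate-linearity in the second slot so that $\nabla_1 + i\nabla_2$ appears in the first term and its complement in the second (this is exactly why the \emph{anti}-holomorphic connection pairs with the \emph{anti}-holomorphic derivation). Then I multiply on the right by $p_\psi$ and use $p_\psi \cdot \zeta = \psi \cdot \lb \psi,\zeta\rb_{\bullet}$ to collapse the second term: ${}_{\bullet}\!\lb \psi, \overline{\nabla}\psi\rb \cdot p_\psi = {}_{\bullet}\!\lb \psi, \overline{\nabla}\psi\rb \cdot {}_{\bullet}\!\lb \psi,\psi\rb$, and a further application of associativity rewrites this in terms of $\lb \psi, \overline{\nabla}\psi\rb_{\bullet}$. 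The upshot should be
\begin{equation}
\overline{\partial}(p_\psi)\,p_\psi = {}_{\bullet}\!\lb \overline{\nabla}\psi,\psi\rb \cdot p_\psi + {}_{\bullet}\!\lb \psi, \psi \cdot \lb \psi,\overline{\nabla}\psi\rb_{\bullet}\rb,
\end{equation}
and I would clean this up so that the vanishing of $\overline{\partial}(p_\psi)p_\psi$ becomes equivalent to ${}_{\bullet}\!\lb \overline{\nabla}\psi - \psi \cdot b,\; \psi\rb = 0$ for a suitable $b \in \mc{B}$.

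For the ``if'' direction, assume $\overline{\nabla}\psi = \psi \cdot b$; substituting into the displayed formula and repeatedly using $\psi \cdot \lb \psi,\zeta\rb_{\bullet} = p_\psi \cdot \zeta$ together with $p_\psi\psi = \psi$, the two terms cancel and $\overline{\partial}(p_\psi)p_\psi = 0$. For the ``only if'' direction, the natural candidate is $b := \lb \psi, \overline{\nabla}\psi\rb_{\bullet}$; one must check that $\overline{\nabla}\psi - \psi\cdot b$ is annihilated by ${}_{\bullet}\!\lb -,\psi\rb$, then promote this to $\overline{\nabla}\psi = \psi\cdot b$ by multiplying the bracket relation suitably and using that $p_\psi$ fixes $\psi$. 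The holomorphic case is verbatim the same with $\partial$, $\nabla$ in place of $\overline{\partial}$, $\overline{\nabla}$. The main obstacle I anticipate is purely bookkeeping rather than conceptual: keeping the conjugate-linear slot straight when differentiating the left inner product, and justifying the passage from ``the bracket ${}_{\bullet}\!\lb \eta,\psi\rb$ vanishes'' to ``$\eta = 0$ on the relevant submodule'' — this uses the normalization $\lb\psi,\psi\rb_\bullet = 1_\mc{B}$ to write $\eta = p_\psi\cdot\eta$ plus its complement and is where one actually needs the full-projection hypothesis. Since this is precisely \cite[Proposition 3.5]{DLL:Sigma} transported through the Fourier-dual picture, I would either cite it directly or reproduce the short computation above.
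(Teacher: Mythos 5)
The paper itself gives no proof of this proposition; it is quoted from \cite[Proposition 3.5]{DLL:Sigma}, so your reconstruction has to stand on its own, and its skeleton (Leibniz rule, associativity, normalization) is indeed the right one. But as written there is a genuine gap in the middle of the computation. With the left product conjugate-linear in its second slot, the Leibniz rule gives $\overline{\partial}(p_\psi)={}_{\mc{A}}\lb\overline{\nabla}\psi,\psi\rb+{}_{\mc{A}}\lb\psi,\nabla\psi\rb$, with the \emph{holomorphic} connection in the second entry; your prose acknowledges this, but both of your displays keep $\overline{\nabla}$ in the second slot, and the error propagates. After right multiplication by $p_\psi$ the correct expression is ${}_{\mc{A}}\lb\overline{\nabla}\psi,\psi\rb+{}_{\mc{A}}\lb\psi\cdot\lb\nabla\psi,\psi\rb_{\mc{B}},\psi\rb$, and to collapse it into the single bracket ${}_{\mc{A}}\lb\,\overline{\nabla}\psi-\psi\cdot\lb\psi,\overline{\nabla}\psi\rb_{\mc{B}},\,\psi\,\rb$ --- which is what makes both implications immediate --- you must use the identity $\lb\nabla\psi,\psi\rb_{\mc{B}}=-\lb\psi,\overline{\nabla}\psi\rb_{\mc{B}}$, obtained by differentiating the normalization $\lb\psi,\psi\rb_{\mc{B}}=1_{\mc{B}}$ and using the compatibility of the connection with the \emph{right} hermitian structure. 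This step appears nowhere in your sketch, and without it the claimed cancellation in the ``if'' direction simply fails: substituting $\overline{\nabla}\psi=\psi\cdot b$ into your displayed formula yields ${}_{\mc{A}}\lb\psi\cdot b,\psi\rb+{}_{\mc{A}}\lb\psi\cdot b,\psi\rb^{*}$, a sum of an element and its adjoint, which is not zero in general. The minus sign coming from the derivative of the constant $1_{\mc{B}}$ is precisely the mechanism of the cancellation, so it must be stated as a hypothesis used and carried through the computation.

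The concluding step of the ``only if'' direction also needs repair. Your proposal to ``write $\eta=p_\psi\cdot\eta$ plus its complement'' does not work: $p_\psi\cdot\eta=\psi\cdot\lb\psi,\eta\rb_{\mc{B}}$ has no a priori relation to $\eta$, and ${}_{\mc{A}}\lb\eta,\psi\rb=0$ does not say $\eta$ lies in the range of $1-p_\psi$ in any useful sense. The correct (and shorter) argument, for $\eta:=\overline{\nabla}\psi-\psi\cdot\lb\psi,\overline{\nabla}\psi\rb_{\mc{B}}$, is $\eta=\eta\cdot\lb\psi,\psi\rb_{\mc{B}}={}_{\mc{A}}\lb\eta,\psi\rb\cdot\psi=0$, i.e.\ associativity read in the other direction together with the normalization; this also exhibits $b=\lb\psi,\overline{\nabla}\psi\rb_{\mc{B}}$ explicitly. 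With the slot bookkeeping corrected, the anti-symmetry identity inserted, and this final step replaced, your outline becomes the standard proof from \cite{DLL:Sigma}; the holomorphic case is then verbatim, as you say.
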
  

Rather the tight condition $\lb \psi, \psi \rb_{\mc{B}}=1_{\mc{B}}$ we can be relaxed by the invertibility of $\lb \psi, \psi \rb_{\mc{B}}$ through normalization . This strategy to find a projection in a left algebra through an Equivalence Bimodule  and a right algebra has been known since \cite{R:Morita, B:QT}.
 
\begin{prop}\label{P:Standard}\cite[Proposition 2.1]{Lee2:Sigma}
Let $\psi$ be an element such that $\lb \psi, \psi \rb_\mc{B}$ is invertible in $\mc{B}$. Then $_\mc{A}\langle \tilde{\psi}, \tilde{\psi} \rangle$ are projections in $A$ where $\tilde{\psi}=\psi \lb \psi, \psi\rb_B^{-1/2}$. We note that $\lb \psi, \psi \cdot \lb \psi, \psi \rb_{\mc{B}}^{-1} \rb_{\mc{B}}=\lb \psi \cdot \lb \psi, \psi \rb_{\mc{B}}^{-1}, \psi\rb_{\mc{B}} =1_\mc{B}$.
\end{prop}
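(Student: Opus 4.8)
The plan is to verify directly that $q := {}_{\mc A}\!\lb \tilde\psi, \tilde\psi \rb$ is a projection, using only the bimodule axioms: $\mc{A}$-sesquilinearity and $\mc{B}$-sesquilinearity of the two hermitian products, the associativity identity ${}_{\mc A}\!\lb \xi, \eta\rb \cdot \zeta = \xi\cdot \lb \eta, \zeta\rb_{\mc B}$, and the adjointness ${}_{\mc A}\!\lb\xi,\eta\rb^* = {}_{\mc A}\!\lb\eta,\xi\rb$ (all established, in the relevant incarnations, in Theorems \ref{T:Morita1}--\ref{T:Morita3}). First I would record the auxiliary normalization claim stated at the end of the proposition: writing $b:=\lb\psi,\psi\rb_{\mc B}$, which is positive and invertible in $\mc{B}$, so that $b^{-1/2}$ makes sense and is self-adjoint, one computes using $\mc{B}$-linearity in the appropriate slot that
\[
\lb \psi\cdot b^{-1}, \psi \rb_{\mc B} = b^{-1}\,\lb\psi,\psi\rb_{\mc B}\,?\ \text{— more carefully } \lb\psi\cdot b^{-1},\psi\rb_{\mc B}=(b^{-1})^*\lb\psi,\psi\rb_{\mc B}=b^{-1}b=1_{\mc B},
\]
and symmetrically $\lb\psi,\psi\cdot b^{-1}\rb_{\mc B}=1_{\mc B}$; hence with $\tilde\psi = \psi\cdot b^{-1/2}$ we get $\lb\tilde\psi,\tilde\psi\rb_{\mc B} = b^{-1/2}\,b\,b^{-1/2} = 1_{\mc B}$.

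Next, self-adjointness of $q$ is immediate from ${}_{\mc A}\!\lb\tilde\psi,\tilde\psi\rb^* = {}_{\mc A}\!\lb\tilde\psi,\tilde\psi\rb$. For idempotency I would apply the associativity identity with the triple $(\tilde\psi,\tilde\psi,\tilde\psi)$:
\[
q\cdot q = {}_{\mc A}\!\lb \tilde\psi,\tilde\psi\rb \cdot {}_{\mc A}\!\lb\tilde\psi,\tilde\psi\rb\ \text{acting via } {}_{\mc A}\!\lb\tilde\psi,\tilde\psi\rb\cdot\big({}_{\mc A}\!\lb\tilde\psi,\tilde\psi\rb\big),
\]
and then move the inner bracket across: ${}_{\mc A}\!\lb\tilde\psi,\tilde\psi\rb\cdot\tilde\psi = \tilde\psi\cdot\lb\tilde\psi,\tilde\psi\rb_{\mc B} = \tilde\psi\cdot 1_{\mc B} = \tilde\psi$. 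Substituting this back into the $\mc{A}$-valued bracket (using that ${}_{\mc A}\!\lb-,-\rb$ is well-defined on the module and respects the action in its first variable) yields $q\cdot q = {}_{\mc A}\!\lb \tilde\psi, \tilde\psi\rb = q$. This is exactly the argument of \cite[Prop.~2.1]{Lee2:Sigma} and \cite{R:Morita, B:QT}; the only subtlety is that the manipulations take place inside the \emph{smooth} subalgebras rather than the $C^*$-completions, so one should note that $b^{-1/2}\in\mc B$ whenever $b$ is invertible in the smooth algebra — which holds because the smooth algebras here are closed under the holomorphic functional calculus (spectral invariance), a fact available in the present setting of $\mc A(\Lambda,\sigma)$, $\mc B(\Lambda^{\circ},\sigma^*)$ and their Fourier-dual counterparts.

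The main obstacle, such as it is, is bookkeeping rather than conceptual: one must keep straight which hermitian product is sesquilinear in which variable under the two algebra actions (the $\mc A$-valued one being linear in the first slot for the left action, the $\mc B$-valued one linear in the second slot for the right action), so that $b^{1/2}$ and $b^{-1/2}$ migrate with the correct adjoints; and one must invoke spectral invariance of the smooth algebra to legitimize $b^{-1/2}$. Everything else is a one-line application of the associativity identity, which is why the proof is short. I would present the normalization identity first, then self-adjointness, then the two-line idempotency computation.
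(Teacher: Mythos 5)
Your proof is correct and is precisely the intended argument: the paper itself offers no proof of Proposition \ref{P:Standard} (it is quoted from \cite[Proposition 2.1]{Lee2:Sigma}), and your verification --- the normalization $\lb\tilde\psi,\tilde\psi\rb_{\mc B}=b^{-1/2}\,b\,b^{-1/2}=1_{\mc B}$ by sesquilinearity, self-adjointness from ${}_{\mc A}\lb\xi,\eta\rb^{*}={}_{\mc A}\lb\eta,\xi\rb$, and idempotency via the associativity identity ${}_{\mc A}\lb\tilde\psi,\tilde\psi\rb\cdot\tilde\psi=\tilde\psi\cdot\lb\tilde\psi,\tilde\psi\rb_{\mc B}=\tilde\psi$ --- is exactly the standard computation of that reference and of \cite{R:Morita, B:QT}. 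Your remark on spectral invariance correctly handles the one genuine subtlety (that $b^{-1}$ and $b^{-1/2}$ remain in the smooth algebra), so apart from the false start in your normalization display (which you immediately correct to $\lb\psi\cdot b^{-1},\psi\rb_{\mc B}=(b^{-1})^{*}\lb\psi,\psi\rb_{\mc B}$) nothing is missing.
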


Now we show that Fourier transform of $\psi$ in Heisenberg module $\mc{S}(\Gamma)$ satisfies anti-self duality equation if $\psi$ satisfies the self duality equation. 
\begin{prop}
Let $\psi \in \mc{S}(\Gamma)$ be such that $\| \psi\|^2=1$. Suppose that $p_{\psi} \in \mc{A}$ satisfies the self-duality equation. Then  $p_{\widehat{\psi}} \in \mc{A}^{\circ}$ satisfies the anti-self duality equation. 
\end{prop}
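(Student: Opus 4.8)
The plan is to translate both duality equations into the generalized--eigenvector language of Proposition~\ref{P:solitons} and to carry the eigenvector property across the Fourier transform. First I would check that the statement is well posed: by Plancherel $\|\widehat{\psi}\|^2=\|\psi\|^2=1$, so $V^{\circ}_{\widehat{\psi}}\widehat{\psi}\in\mc{S}(G)\subset L^1(G,\sigma)$ is a projector and $p_{\widehat{\psi}}:={}_{\mc{A}^{\cc}}\lb\widehat{\psi},\widehat{\psi}\rb$ is a genuine projection in $\mc{A}^{\cc}$. Applying Proposition~\ref{P:solitons} to the equivalence bimodule $\mc{S}(\Gamma)$ over $\mc{A}$ and $\C$, the hypothesis that $p_\psi$ solves the self--duality equation (\ref{E:selfdual}) is equivalent to the existence of $b\in\C$ with $\overline{\nabla}\psi=\psi\cdot b$, i.e.\ $\psi$ is a generalized eigenvector of the anti--holomorphic connection $\overline{\nabla}=\nabla_1+i\nabla_2$; dually, $p_{\widehat{\psi}}$ solves the anti--self duality equation (\ref{E:antiselfdual}) precisely when $\widehat{\psi}$ is a generalized eigenvector of the holomorphic connection $\nabla=\nabla_1-i\nabla_2$ on $\mc{S}(\widehat{\Gamma})$. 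So it suffices to show that $\mc{F}$ sends generalized eigenvectors of $\overline{\nabla}$ to generalized eigenvectors of $\nabla$.

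Writing $\mc{F}\circ\nabla_i=c_i\,\nabla_{i+1}\circ\mc{F}$ for the nonzero scalars built into the standing compatibility $\mc{F}\circ\nabla_i\equiv\nabla_{i+1}\circ\mc{F}\pmod2$, one computes
\[
\mc{F}\circ\overline{\nabla}
=\mc{F}\circ\nabla_1+i\,\mc{F}\circ\nabla_2
=c_1\,\nabla_2\circ\mc{F}+ic_2\,\nabla_1\circ\mc{F}
=\bigl(ic_2\,\nabla_1+c_1\,\nabla_2\bigr)\circ\mc{F}.
\]
Provided the normalization of the covariant derivatives makes $ic_2\nabla_1+c_1\nabla_2$ a nonzero multiple of $\nabla=\nabla_1-i\nabla_2$ (equivalently $c_1=c_2$, the multiple then being $ic_1$), we obtain $\mc{F}\circ\overline{\nabla}=\lambda\,\nabla\circ\mc{F}$ with $\lambda\neq0$. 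Since the right $\C$--action commutes with $\mc{F}$, applying $\mc{F}$ to $\overline{\nabla}\psi=\psi\cdot b$ gives $\lambda\,\nabla(\widehat{\psi})=\widehat{\psi}\cdot b$, hence $\nabla(\widehat{\psi})=\widehat{\psi}\cdot b'$ for a suitable $b'\in\C$; thus $\widehat{\psi}$ is a generalized eigenvector of $\nabla$ on $\mc{S}(\widehat{\Gamma})$. By the dual form of Proposition~\ref{P:solitons} recorded above (for the bimodule $\mc{S}(\widehat{\Gamma})$ over $\mc{A}^{\cc}$ and $\C$, the connections being compatible with $\partial_1,\partial_2$ on $\mc{A}^{\cc}$ by the standing hypotheses), the projection $p_{\widehat{\psi}}$ then solves the anti--self duality equation (\ref{E:antiselfdual}).

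The one genuinely nonformal step is the displayed intertwining $\mc{F}\circ\overline{\nabla}=\lambda\,\nabla\circ\mc{F}$: the index swap $1\leftrightarrow2$ effected by the Fourier transform is what interchanges $\overline{\nabla}$ and $\nabla$, but one must verify that the accompanying scalars conspire so that $\mc{F}$ lands on a multiple of $\nabla$ rather than of $\overline{\nabla}$ --- exactly the distinction between the stated conclusion and its conjugate. An equivalent route bypasses Proposition~\ref{P:solitons}: conjugation by $\mc{F}$ carries $\pi(k)$ to $\pi^{\cc}\iota(k)$, hence is an isomorphism $\Phi\colon\mc{A}\to\mc{A}^{\cc}$ with $\Phi(p_\psi)=p_{\widehat{\psi}}$, and pushing the Leibniz rule through $\mc{F}$ yields $\Phi\circ\partial_j=c_j\,\partial_{j+1}\circ\Phi$; once this gives $\Phi\circ\overline{\partial}=\lambda\,\partial\circ\Phi$, one concludes $\partial(p_{\widehat{\psi}})\,p_{\widehat{\psi}}=\lambda^{-1}\Phi\bigl((\overline{\partial}p_\psi)\,p_\psi\bigr)=\lambda^{-1}\Phi(0)=0$.
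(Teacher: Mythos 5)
Your argument is essentially the paper's own proof: Proposition~\ref{P:solitons} converts the hypothesis into $\overline{\nabla}\psi=\psi\cdot\lambda$ with $\lambda\in\C$, and the standing intertwining $\mc{F}\circ\nabla_i=\nabla_{i+1}\circ\mc{F}$ (indices mod $2$) yields $\nabla\widehat{\psi}=\mc{F}(\nabla_2\psi)-i\mc{F}(\nabla_1\psi)=-i\mc{F}(\overline{\nabla}\psi)=\lambda'\widehat{\psi}$, whence $p_{\widehat{\psi}}$ solves the anti-self-duality equation, exactly as in the paper. The scalar bookkeeping you single out as the ``one nonformal step'' is vacuous in this setting, since the paper's compatibility hypothesis is the exact relation above (the ``mod $2$'' refers only to the index), i.e.\ your $c_1=c_2=1$.
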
  
\begin{proof}
Since $\| \widehat{\psi}\|^2=1$, $p_{\widehat{\psi}}$ is also a projection in $\mc{A}^{\circ}$. Since 
\[ (\nabla_1+i \nabla_2 )(\psi)=\psi \lambda \] for $\lambda \in \C$, 
\[\begin{split}
\nabla_1 \mc{F}(\psi) - i \nabla_2 \mc{F}(\psi)&=\mc{F} (\nabla_2 (\psi))- i \mc{F} (\nabla_1 (\psi))\\
&=-i \mc{F}(\nabla_1 \psi + i \nabla_2 \psi) \\ 
&=-i \mc{F}(\lambda \psi)=\lambda' \mc{F}(\psi).
\end{split}\]
Then the conclusion follows from Proposition \ref{P:solitons}.  
\end{proof} 

Like the continuous case we prove that following for a lattice $\Lambda$.   
\begin{thm}\label{T:ncsolitons}
Let $\mc{A}_{\infty}$, $\mc{A}_{\infty}^{\cc}$  and $\mc{B}_{\infty}, \mc{B}^{\cc}_{\infty}$ as in Theorem \ref{T:Morita2} and Theorem \ref{T:Morita3}. Suppose that $\lb \psi, \psi \rb_{\mc{B}}=1_\mc{B}$ and $\overline{\partial }p_{\psi} p_\psi =0$. Then $\lb \widehat{\psi}, \widehat{\psi} \rb_{\mc{B}^{\cc}}=1_{\mc{B}^{\cc}}$ and $\partial p_{\widehat{\psi}} p_{\widehat{\psi}}=0$.  In other words, $\psi \in \mc{S}(\Gamma)$ is a tight Gabor frame and satisfies the self duality equation, then $\widehat{\psi} \in \mc{S}(\widehat{\Gamma})$ is also a tight Gabor frame and satisfies the anti-self duality equation. 
\end{thm}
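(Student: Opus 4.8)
The plan is to follow the template of the continuous Proposition proved just above, with the trivial right algebra $\C$ replaced by $\mc{B}^{\cc}$ and the scalar eigenvalue by an element of $\mc{B}^{\cc}$. The mechanism is that the Fourier transform $\mc{F}\colon\mc{S}(\Gamma)\to\mc{S}(\widehat{\Gamma})$ — which by the intertwining theorem conjugates $\pi$ into $\pi^{\cc}\iota$ — induces unital $*$-isomorphisms $\Phi_{\mc{A}}\colon\mc{A}_{\infty}\to\mc{A}_{\infty}^{\cc}$ and $\Phi_{\mc{B}}\colon\mc{B}_{\infty}\to\mc{B}_{\infty}^{\cc}$ intertwining the two bimodule structures: for $\xi,\eta\in\mc{S}(\Gamma)$, $K\in\mc{A}_{\infty}$, $\mf{b}\in\mc{B}_{\infty}$,
\[ \mc{F}(K\cdot\xi)=\Phi_{\mc{A}}(K)\cdot\widehat{\xi},\quad \mc{F}(\xi\cdot\mf{b})=\widehat{\xi}\cdot\Phi_{\mc{B}}(\mf{b}),\quad \Phi_{\mc{A}}({}_{\mc{A}}\lb\xi,\eta\rb)={}_{\mc{A}^{\cc}}\!\lb\widehat{\xi},\widehat{\eta}\rb,\quad \Phi_{\mc{B}}(\lb\xi,\eta\rb_{\mc{B}})=\lb\widehat{\xi},\widehat{\eta}\rb_{\mc{B}^{\cc}}. \]
All four are essentially contained in the computations of Theorem \ref{T:Morita3} together with the matrix-coefficient identity $\lb\xi,\pi(\gamma,t)\eta\rb=\lb\widehat{\xi},\pi^{\cc}\iota(\gamma,t)\widehat{\eta}\rb$, which yields $V^{\cc}_{\widehat{\eta}}\widehat{\xi}=V_{\eta}\xi$ as functions on $G$ (so the $\Lambda$- and $\Lambda^{\cc}$-samples entering the two hermitian products agree); the isomorphisms themselves are the restrictions of $\Ad_{\mc{F}}$, well defined because $\mc{F}$ conjugates $\pi(\lambda)$ to $\pi^{\cc}\iota(\lambda)$ for $\lambda\in\Lambda$ and, after taking adjoints and using $(\pi^{\cc}\iota(g))^{*}=\sigma(g,g)\pi^{\cc}\iota(g^{-1})$, conjugates $\pi^{*}(\lambda^{\cc})$ to $(\pi^{\cc}\iota)^{*}(\lambda^{\cc})$ for $\lambda^{\cc}\in\Lambda^{\cc}$. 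In particular $\Phi_{\mc{B}}(1_{\mc{B}})=1_{\mc{B}^{\cc}}$.

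Granting this dictionary, the first assertion is immediate: $\lb\widehat{\psi},\widehat{\psi}\rb_{\mc{B}^{\cc}}=\Phi_{\mc{B}}(\lb\psi,\psi\rb_{\mc{B}})=\Phi_{\mc{B}}(1_{\mc{B}})=1_{\mc{B}^{\cc}}$, so $\widehat{\psi}$ is again a tight Gabor frame and $p_{\widehat{\psi}}={}_{\mc{A}^{\cc}}\lb\widehat{\psi},\widehat{\psi}\rb=\Phi_{\mc{A}}(p_{\psi})$ is a projection in $\mc{A}_{\infty}^{\cc}$.

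For the duality equation I would argue as in the continuous case. By Proposition \ref{P:solitons} applied to $\mc{S}(\Gamma)$ over $(\mc{A}_{\infty},\mc{B}_{\infty})$, the hypothesis $\overline{\partial}(p_{\psi})p_{\psi}=0$ of (\ref{E:selfdual}) is equivalent to $\overline{\nabla}\psi=\nabla_{1}\psi+i\,\nabla_{2}\psi=\psi\cdot\mf{b}$ for some $\mf{b}\in\mc{B}_{\infty}$. Applying $\mc{F}$ and using the standing relation $\mc{F}\circ\nabla_{i}\equiv\nabla_{i+1}\circ\mc{F}\ (\text{mod 2})$,
\[ \mc{F}(\overline{\nabla}\psi)=\nabla_{2}\widehat{\psi}+i\,\nabla_{1}\widehat{\psi}=i\bigl(\nabla_{1}\widehat{\psi}-i\,\nabla_{2}\widehat{\psi}\bigr)=i\,\nabla\widehat{\psi}, \]
while $\mc{F}(\psi\cdot\mf{b})=\widehat{\psi}\cdot\Phi_{\mc{B}}(\mf{b})$; hence $\nabla\widehat{\psi}=\widehat{\psi}\cdot\mf{b}'$ with $\mf{b}':=-i\,\Phi_{\mc{B}}(\mf{b})\in\mc{B}_{\infty}^{\cc}$. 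Thus $\widehat{\psi}$ is a generalized eigenvector of the holomorphic connection $\nabla$ on $\mc{S}(\widehat{\Gamma})$, and since $\lb\widehat{\psi},\widehat{\psi}\rb_{\mc{B}^{\cc}}=1_{\mc{B}^{\cc}}$, Proposition \ref{P:solitons} read on $\mc{S}(\widehat{\Gamma})$ over $(\mc{A}_{\infty}^{\cc},\mc{B}_{\infty}^{\cc})$ with the holomorphic connection converts this into $\partial(p_{\widehat{\psi}})p_{\widehat{\psi}}=0$, the anti-self duality equation (\ref{E:antiselfdual}). (Alternatively one transports the equation directly: compatibility of $\nabla$ with the hermitian products and density of $\{{}_{\mc{A}}\lb\xi,\eta\rb\}$ force $\Phi_{\mc{A}}\circ\partial_{i}\equiv\partial_{i+1}\circ\Phi_{\mc{A}}\ (\text{mod 2})$, whence $0=\Phi_{\mc{A}}(\overline{\partial}(p_{\psi})p_{\psi})=i\,\partial(p_{\widehat{\psi}})\,p_{\widehat{\psi}}$.)

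The routine part is the index and sign bookkeeping in the two displays; the substantive point is the first paragraph — verifying that $\mc{F}$ genuinely descends to the lattice-quantized algebras $\mc{A}_{\infty}^{\cc}$, $\mc{B}_{\infty}^{\cc}$ as unital $*$-isomorphisms compatible with the bimodule and with the differential structure, so that in particular the covariant derivatives $\nabla_{1},\nabla_{2}$ on $\mc{S}(\widehat{\Gamma})$ are compatible with the transported derivations on $\mc{A}_{\infty}^{\cc}$ and $\mc{B}_{\infty}^{\cc}$ and Proposition \ref{P:solitons} is legitimately available on the dual side. This is implicit in the blanket hypotheses of Section \ref{S:solitons}, but it deserves to be spelled out; once the triple $(\mc{F},\Phi_{\mc{A}},\Phi_{\mc{B}})$ is in hand, everything else is formal and parallels Theorem \ref{T:Morita3} and the continuous proposition above.
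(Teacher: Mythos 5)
Your argument is correct and rests on the same mechanism as the paper's proof: the relation $\mc{F}\circ\nabla_i\equiv\nabla_{i+1}\circ\mc{F}$ turns the generalized eigenvector equation $\overline{\nabla}\psi=\psi\cdot\mf{b}$ into $\nabla\widehat{\psi}=\widehat{\psi}\cdot\mf{b}'$, and Proposition \ref{P:solitons} is invoked in both directions. Where you differ is in the bookkeeping of the transfer. The paper never names the isomorphisms $\Phi_{\mc{A}},\Phi_{\mc{B}}$ in this proof: tightness of $\widehat{\psi}$ is obtained by checking, via associativity and the $\mc{F}$-equivariance of the left action, that $\widehat{\phi}\cdot\lb\widehat{\psi},\widehat{\psi}\rb_{\mc{B}^{\cc}}=\widehat{\phi}$ for all $\widehat{\phi}$ (plus faithfulness of the right action), and on the dual side the eigenvalue is re-identified as $\lb\widehat{\psi},\nabla\widehat{\psi}\rb_{\mc{B}^{\cc}}$ by rewriting $\psi\cdot\lb\psi,\overline{\nabla}\psi\rb_{\mc{B}}={}_{\mc{A}}\lb\psi,\psi\rb\cdot\overline{\nabla}\psi$ and applying associativity again after Fourier transform. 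You instead package everything into $\Phi_{\mc{A}}=\Ad_{\mc{F}}$ and $\Phi_{\mc{B}}=\Ad_{\mc{F}}$ restricted to the lattice quantizations, using $V^{\cc}_{\widehat{\eta}}\widehat{\xi}=V_{\eta}\xi$ to get $\Phi_{\mc{B}}(\lb\xi,\eta\rb_{\mc{B}})=\lb\widehat{\xi},\widehat{\eta}\rb_{\mc{B}^{\cc}}$ and $\mc{F}(\xi\cdot\mf{b})=\widehat{\xi}\cdot\Phi_{\mc{B}}(\mf{b})$; this is legitimate (it follows from the intertwining theorem, unitarity of $\mc{F}$, and $(\pi^{\cc}\iota(g))^{*}=\sigma(g,g)\pi^{\cc}\iota(g^{-1})$) and in fact anticipates the map $\mc{J}$ which the paper only introduces later, in the proof of Theorem \ref{T:eigenvector}. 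Your version buys a one-line proof of tightness and makes explicit the structural point that $\mc{F}$ implements a bimodule isomorphism; the paper's version stays entirely inside the hermitian-product calculus already established in Theorem \ref{T:Morita3} and so needs no new lemma at that point. Your parenthetical claim that $\Phi_{\mc{A}}\circ\partial_i\equiv\partial_{i+1}\circ\Phi_{\mc{A}}$ is forced by compatibility and density is not needed for the main line and would require its own argument, but since you only use Proposition \ref{P:solitons} on the dual side — exactly as the paper implicitly does, under the blanket hypotheses of Section \ref{S:solitons} — the proof stands.
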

\begin{proof}
To show that $\lb \widehat{\psi}, \widehat{\psi} \rb_{\mc{B}^{\cc}}=1_{\mc{B}^{\cc}}$ 
we need to show that  for any $\phi \in \mc{S}(\Gamma)$ 
\[ \widehat{\phi} \cdot \lb \widehat{\psi}, \widehat{\psi} \rb_{\mc{B}^{\cc}}=\widehat{\phi}.\] 
Indeed, 
\[
\begin{split}
 \widehat{\phi} \cdot \lb \widehat{\psi}, \widehat{\psi} \rb_{\mc{B}^{\cc}}&=_{\mc{A}^{\cc}}\!\lb \widehat{\phi}, \widehat{\psi} \rb \cdot \widehat{\phi}\\
 &= \mc{F}(_{\mc{A}} \lb \phi, \psi \rb \cdot \psi)\\
 &=\mc{F}(\phi \cdot \lb \psi, \psi \rb_{\mc{B}})\\
 &=\widehat{\phi}.
\end{split}
\]
Suppose that $(\nabla_1 + i \nabla_2)\psi =\psi \cdot \lb \psi, \overline{\nabla}\psi \rb_{\mc{B}}$. 
Then \[
\begin{split}
\nabla \widehat{\psi}&=\nabla_1 (\mc{F} \psi) - i \nabla_2( \mc{F} \psi)\\
&= \mc{F} (\nabla_2 \psi)- i \mc{F} (\nabla_1 \psi) \\
&= -i \mc{F}(\nabla_1 + i \nabla_2)\psi \\
&=-i \mc{F}(\psi \cdot \lb \psi, \overline{\nabla}\psi \rb_{\mc{B}})\\
&=-i \mc{F}( _{\mc{A}}\lb \psi, \psi\rb \cdot \overline{\nabla} \psi )\\
&=-i _{\mc{A}^{\cc}}\lb \widehat{\psi},\widehat{\psi}\rb \cdot (\mc{F} \circ \overline{\nabla} \psi )\\
&=_{\mc{A}^{\cc}}\lb \widehat{\psi},\widehat{\psi}\rb \cdot (\nabla \widehat{\psi} )\\
&= \widehat{\psi}\cdot \lb \widehat{\psi}, \nabla (\widehat{\psi}) \rb_{\mc{B}^{\circ}}.
\end{split}
\]
\end{proof} 

Now let us comment the recent approach based on Gabor analysis to find the generalized eigenvector $\psi$ for $\overline{\nabla}$. In the continuous case, it reduces to find an ordinary eigenfunction or the first order ordinary PDE. However, in the discrete case, i.e.,  $\Lambda$ is a lattice of $\widehat{\Gamma}\times \Gamma$, it must satisfy 
\begin{equation}\label{E:eigen}
\overline{\nabla} \psi = \psi \cdot \lb \psi \cdot \lb \psi, \psi \rb^{-1}_{\mc{B}} , \overline {\nabla} \psi \rb_{\mc{B}}
\end{equation}
for $\psi$ provided that  $\lb \psi, \psi \rb_{\mc{B}}$ invertible \cite[Theorem4.5]{Lee2:Sigma}.  This could be achived if we can show that $\overline{\nabla}\psi$ belongs to the closed linear span of $\{\pi(\lambda^{\cc}) \psi \mid \lambda^{\cc} \in \Lambda^{\cc}\}$ because of the following theorem known as \emph{the duality principle in Gabor analysis}. 
\begin{thm}\cite[Theorem 6.5]{JL:Duality}
The following are equivalent. 
\begin{enumerate}
\item $\lb \psi, \psi \rb_{\mc{B}}$ invertible or the system $\{ \pi(\lambda)\psi\mid \lambda \in \Lambda\}$ generates a Gabor frame in term of Gabor analysis.  
\item $\{\pi(\lambda^{\cc}) \psi \mid \lambda^{\cc} \in \Lambda^{\cc}\} $ is a Riesz sequence  for $l^2(\Lambda^{\cc})$ i.e. for any sequence $\mf{c}=(c_{\lambda^{\cc}}) \in l^2(\Lambda^{\cc})$, there exist bounds $A, B$ such that 
\[ B\| \mf{c} \|^2 \le \|\frac{1}{s(\Lambda)} \sum_{\lambda^{\cc} \in \Lambda^{\cc}}c(\lambda^{\cc})\pi(\lambda^{\cc})\psi   \|^2 \le  A \| \mf{c}\|^2. \]
\end{enumerate}
\end{thm}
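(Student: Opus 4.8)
The plan is to reduce both (1) and (2) to the single $C^{*}$-algebraic statement that the positive element $\lb\psi,\psi\rb_{\mc{B}}$ is invertible in $\mc{B}=\mc{A}(\Lambda^{\cc},\sigma^{*})$, and then to conclude by $C^{*}$-spectral permanence: invertibility of an element of a unital $C^{*}$-algebra is detected by any faithful $*$-representation. The substance of both reductions is that two a priori different operators --- the Gabor frame operator of $\{\pi(\lambda)\psi\mid\lambda\in\Lambda\}$ and the Gram operator of the dual system $\{\pi(\lambda^{\cc})\psi\mid\lambda^{\cc}\in\Lambda^{\cc}\}$ --- are each nothing but the action of $\lb\psi,\psi\rb_{\mc{B}}$, realized in two different faithful representations of $\mc{B}$; in this form the statement is the operator-algebraic avatar of the Ron--Shen duality principle.

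For (1), I would read the associativity of the two hermitian products on the equivalence bimodule $\mc{S}(\Gamma)$ provided by Theorem \ref{T:Morita2}: for every $\xi\in\mc{S}(\Gamma)$ one has $S_{\psi}\xi:=\sum_{\lambda\in\Lambda}\lb\xi,\pi(\lambda)\psi\rb\,\pi(\lambda)\psi = {}_{\mc{A}}\lb\xi,\psi\rb\cdot\psi = \xi\cdot\lb\psi,\psi\rb_{\mc{B}} = \pi^{*}(\lb\psi,\psi\rb_{\mc{B}})\,\xi$, the middle equality being the associativity axiom and the outer expressions being, respectively, the Gabor frame operator applied to $\xi$ and the right module action of $\lb\psi,\psi\rb_{\mc{B}}$. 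Thus $S_{\psi}$ is exactly the image of $\lb\psi,\psi\rb_{\mc{B}}$ under the faithful representation of $\mc{B}$ on $L^{2}(\Gamma)$ by $\pi^{*}$. Since $\lb\psi,\psi\rb_{\mc{B}}\ge 0$ in $\mc{B}$, its spectrum is unchanged by the unital inclusion $\mc{B}\subseteq B(L^{2}(\Gamma))$, and the Bessel (upper) bound is automatic because $V_{\psi}\psi$ decays rapidly along $\Lambda^{\cc}$ for $\psi\in\mc{S}(\Gamma)$ (or $\psi\in S_{0}(\Gamma)$). Hence $\{\pi(\lambda)\psi\mid\lambda\in\Lambda\}$ is a frame $\iff S_{\psi}$ is boundedly invertible $\iff \lb\psi,\psi\rb_{\mc{B}}$ is invertible in $\mc{B}$; in particular the two alternatives in (1) coincide.

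For (2), let $D\colon\mf{c}\mapsto\frac{1}{s(\Lambda)}\sum_{\mu^{\cc}\in\Lambda^{\cc}}c(\mu^{\cc})\,\pi(\mu^{\cc})\psi$ be the synthesis map and $G=D^{*}D$ its Gram operator on $l^{2}(\Lambda^{\cc})$, with entries $G_{\lambda^{\cc},\mu^{\cc}}=\frac{1}{s(\Lambda)^{2}}\lb\pi(\mu^{\cc})\psi,\pi(\lambda^{\cc})\psi\rb$. By (\ref{E:adjoint}) together with the cocycle identity (\ref{E:cocycl}) this entry equals $V_{\psi}\psi(\lambda^{\cc}-\mu^{\cc})$ times a $2$-cocycle phase, so $G$ is a twisted convolution operator on $l^{2}(\Lambda^{\cc})$; it therefore lies in a regular representation of a twisted group $C^{*}$-algebra of $\Lambda^{\cc}$ that is canonically $*$-isomorphic to $\mc{B}=C^{*}(\Lambda^{\cc},\sigma^{*})$, and under this isomorphism $G$ corresponds to $s(\Lambda)^{-1}\lb\psi,\psi\rb_{\mc{B}}$. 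Since $\Lambda^{\cc}$ is discrete, hence amenable, this regular representation is faithful. Using once more the decay of $V_{\psi}\psi$ for the upper Riesz bound, $\{\pi(\lambda^{\cc})\psi\mid\lambda^{\cc}\in\Lambda^{\cc}\}$ is a Riesz sequence $\iff G$ is boundedly invertible $\iff \lb\psi,\psi\rb_{\mc{B}}$ is invertible in $\mc{B}$.

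Combining the two previous paragraphs, (1) $\iff \lb\psi,\psi\rb_{\mc{B}}$ is invertible in $\mc{B}\iff$ (2), which is the assertion. The main obstacle is not in this bookkeeping but in what has been borrowed from Theorem \ref{T:Morita2}: the fact that the sum over $\Lambda^{\cc}$ defining $\lb\cdot,\cdot\rb_{\mc{B}}$ really yields an imprimitivity-bimodule structure is the fundamental identity of Gabor analysis relating $\Lambda$-sums to $\Lambda^{\cc}$-sums, and its proof rests on a Poisson-summation argument (Weil's formula over $G/\Lambda$) together with the rapid decay of $V_{\psi}\psi$, which is precisely why one works with Schwartz or Feichtinger windows; see \cite{R:Morita} and \cite{JL:Duality}. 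The remaining ingredients --- the associativity identity, $C^{*}$-spectral permanence, and faithfulness of the regular representation of the twisted group $C^{*}$-algebra of the amenable group $\Lambda^{\cc}$ --- are routine.
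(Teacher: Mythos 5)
The paper offers no proof of this statement at all: it is imported from Jakobsen--Lemvig \cite{JL:Duality}, where it is established by time--frequency methods (Janssen representation and Wexler--Raz type biorthogonality on LCA groups). Your argument instead derives it from the Heisenberg-module structure quoted in Section \ref{S:Heisenberg}: both the Gabor frame operator of $\{\pi(\lambda)\psi\}_{\lambda\in\Lambda}$ and the Gram operator of the adjoint system $\{\pi(\lambda^{\circ})\psi\}_{\lambda^{\circ}\in\Lambda^{\circ}}$ are exhibited as images of the single positive element $\lb\psi,\psi\rb_{\mc{B}}$ in two representations, and invertibility is transported by $C^{*}$-spectral permanence. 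This is essentially Luef's operator-algebraic proof of the duality principle; it is not circular here, since Rieffel's equivalence-bimodule theorem predates and is independent of Gabor duality, and it buys a conceptually cleaner picture (duality as one element viewed in two faithful representations) at the cost of invoking the full Morita machinery, whereas the cited proof stays inside harmonic analysis. Your reduction in part (1) is clean: $\mc{B}$ is by definition the norm closure of $\pi^{*}(L^{1}(\Lambda^{\circ},\sigma^{*}))$ inside $B(L^{2}(\Gamma))$, so the identity $S_{\psi}\xi=\xi\cdot\lb\psi,\psi\rb_{\mc{B}}$ plus spectral permanence does give frame $\Leftrightarrow$ invertibility in $\mc{B}$.

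One step you call routine is, however, the real crux of the implication frame $\Rightarrow$ Riesz sequence. In part (2) the Gram operator acts on $\ell^{2}(\Lambda^{\circ})$, while $\lb\psi,\psi\rb_{\mc{B}}$ lives in $\mc{B}\subset B(L^{2}(\Gamma))$; to transfer invertibility you need a $*$-isomorphism $\mc{B}\cong C^{*}_{r}(\Lambda^{\circ},\sigma^{*})$ carrying $\pi^{*}(\mf{b})$ to the twisted regular representation of $\mf{b}$. Amenability of $\Lambda^{\circ}$ only yields faithfulness of the regular representation of the universal twisted group algebra; what is additionally required is faithfulness of the integrated representation $\pi^{*}$ itself, for otherwise invertibility of the image in the quotient $\mc{B}$ would not force invertibility of the Gram operator. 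This faithfulness is true and standard for Heisenberg-type representations restricted to lattices (it goes back to \cite{R:Morita}, and can also be obtained from the faithful trace $\Tr_{\mc{B}}$ together with the Moyal identity), but your phrase ``canonically $*$-isomorphic'' conceals exactly this point and should be backed by a citation or a short trace argument. With that supplied, and the harmless $s(\Lambda)$ normalizations checked, your sketch is correct.
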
 
\begin{rem}
It is a recent result that the duality principle holds for any closed subgroup $\Lambda$ not necessarily a lattice \cite{JL:Duality}.    
\end{rem}
\begin{prop}\label{P:Riesz}
If  $\{\pi(\lambda^{\cc}) \psi \mid \lambda^{\cc} \in \Lambda^{\cc}\} $ is a Riesz sequence  for $l^2(\Lambda^{\cc})$ and only if   $\{\pi^{\cc}\iota(\lambda^{\cc}) \widehat{\psi} \mid \lambda^{\cc} \in \Lambda^{\cc}\} $ is a Riesz sequence  for $l^2(\Lambda^{\cc})$.
\end{prop}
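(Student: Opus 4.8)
The plan is to reduce the statement entirely to the intertwining property of the Fourier transform established earlier, together with the fact that $\mc{F}$ is a unitary (isometric isomorphism) from $L^2(\Gamma)$ onto $L^2(\widehat{\Gamma})$. The point is that a Riesz sequence condition is purely a statement about norms of finite linear combinations, and $\mc{F}$ changes none of these norms.

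First I would record the key pointwise identity coming from the intertwining Theorem of Section~\ref{S:Schrodinger}: for every $g\in G$, and in particular for every $g=\lambda^{\cc}\in\Lambda^{\cc}$, one has $\mc{F}(\pi(g)\psi)=\pi^{\cc}\iota(g)\widehat{\psi}$. Since $\mc{F}$ is linear and $\Lambda^{\cc}$ is countable, for any finitely supported (hence, by density, any $\ell^2$) sequence $\mf{c}=(c_{\lambda^{\cc}})\in\ell^2(\Lambda^{\cc})$ this yields
\[ \mc{F}\left(\frac{1}{s(\Lambda)}\sum_{\lambda^{\cc}\in\Lambda^{\cc}}c(\lambda^{\cc})\pi(\lambda^{\cc})\psi\right)=\frac{1}{s(\Lambda)}\sum_{\lambda^{\cc}\in\Lambda^{\cc}}c(\lambda^{\cc})\pi^{\cc}\iota(\lambda^{\cc})\widehat{\psi}. \]
Applying $\|\cdot\|^2$ to both sides and using that $\mc{F}$ is an isometric isomorphism gives
\[ \left\|\frac{1}{s(\Lambda)}\sum_{\lambda^{\cc}\in\Lambda^{\cc}}c(\lambda^{\cc})\pi(\lambda^{\cc})\psi\right\|^2_{L^2(\Gamma)}=\left\|\frac{1}{s(\Lambda)}\sum_{\lambda^{\cc}\in\Lambda^{\cc}}c(\lambda^{\cc})\pi^{\cc}\iota(\lambda^{\cc})\widehat{\psi}\right\|^2_{L^2(\widehat{\Gamma})} \]
for every $\mf{c}\in\ell^2(\Lambda^{\cc})$. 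Consequently the system $\{\pi(\lambda^{\cc})\psi\mid\lambda^{\cc}\in\Lambda^{\cc}\}$ satisfies the Riesz bounds $B\|\mf{c}\|^2\le\cdots\le A\|\mf{c}\|^2$ with a given pair of constants $A,B$ if and only if $\{\pi^{\cc}\iota(\lambda^{\cc})\widehat{\psi}\mid\lambda^{\cc}\in\Lambda^{\cc}\}$ satisfies them with the same constants; in particular one is a Riesz sequence for $\ell^2(\Lambda^{\cc})$ exactly when the other is, which is the assertion of the proposition.

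There is essentially no obstacle here; the argument is a one-line consequence of the intertwining theorem once it is set up correctly. The only two points deserving a word of care are: (i) one must use the intertwining relation for $\pi$ versus $\pi^{\cc}\circ\iota$ (not $\pi^{\cc}$ alone), so that $\widehat{\psi}$ is transported under the correct representation; and (ii) the normalizing factor $1/s(\Lambda)$ appears identically on both sides and plays no role. If one prefers, the ``only if'' direction can be obtained symmetrically by running the same computation with the inverse Fourier transform $\mc{F}^{-1}$ and $\breve{\,\cdot\,}$ in place of $\mc{F}$ and $\widehat{\,\cdot\,}$, but since $\mc{F}$ is invertible the biconditional is immediate from the displayed equality of norms.
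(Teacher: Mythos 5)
Your argument is exactly the paper's: both rest on the intertwining identity $\mc{F}(\pi(\lambda^{\cc})\psi)=\pi^{\cc}\iota(\lambda^{\cc})\widehat{\psi}$ together with the unitarity of $\mc{F}$, yielding equality of the norms of all linear combinations and hence identical Riesz bounds. The proposal is correct and matches the paper's proof in substance, with only slightly more explicit bookkeeping about finite versus $\ell^2$ coefficient sequences.
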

\begin{proof}
It is immediate from $\pi^{\cc}\iota (\lambda^{\cc}) \widehat{\psi}=\mc{F}(\pi(\lambda^{\cc})\psi)$ that 
for any for any sequence $\mf{c}=(c_{\lambda^{\cc}}) \in l^2(\Lambda^{\cc})$ \[
\|\frac{1}{s(\Lambda)} \sum_{\lambda^{\cc} \in \Lambda^{\cc}}c(\lambda^{\cc})\pi(\lambda^{\cc})\psi  \|^2
=\|\frac{1}{s(\Lambda)} \sum_{\lambda^{\cc} \in \Lambda^{\cc}}c(\lambda^{\cc})\pi^{\cc}\iota(\lambda^{\cc})\widehat{\psi}   \|^2.
\]
\end{proof} 
We point out that the above result is of independent interest in the sense of Gabor analysis; it says the Fourier transform of a Gabor frame becomes a frame using the dual Schr\"{o}dinger representation of a lattice $\Lambda$.  

A duality principle implies the following formula for $\xi \in W:=\overline{\Span}\{\pi(\lambda^{\cc}) \psi \mid \lambda^{\cc} \in \Lambda^{\cc}\} $
\begin{equation} \label{E:WRI}
\xi= \psi \cdot \lb \psi \cdot  \langle \psi , \psi \rangle_\mc{B}^{-1}, \xi \rb_\mc{B}.  
\end{equation}
We note that not all of $\mc{S}(\Gamma)$ belongs to $W$. Hence the following condition is important.

\begin{prop}\label{P:Riesz2}
Let $\psi \in \mc{S}(\Gamma)$. Then $\overline{\nabla} \psi$ belongs to $\overline{\Span}\{\pi(\lambda^{\cc}) \psi \mid \lambda^{\cc} \in \Lambda^{\cc}\} $ if and only if $\nabla (\widehat{\psi})$ belongs to $ \overline{\Span}\{\pi^{\cc}\iota(\lambda^{\cc}) \widehat{\psi} \mid \lambda^{\cc} \in \Lambda^{\cc}\}$. 
\end{prop}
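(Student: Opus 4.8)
The plan is to transport the whole statement through the Fourier transform, exploiting that $\mc{F}\colon L^2(\Gamma)\to L^2(\widehat{\Gamma})$ is a unitary (hence bicontinuous) linear isomorphism, so that it carries closed linear spans to closed linear spans. First I would record the two intertwining facts that are already at hand. By the intertwining Theorem (Figure 1), one has $\mc{F}(\pi(\lambda^{\cc})\psi)=\pi^{\cc}\iota(\lambda^{\cc})\widehat{\psi}$ for every $\lambda^{\cc}\in\Lambda^{\cc}$ (this is exactly the identity used in Proposition \ref{P:Riesz}), whence
\[ \mc{F}\bigl(\overline{\Span}\{\pi(\lambda^{\cc})\psi\mid\lambda^{\cc}\in\Lambda^{\cc}\}\bigr)=\overline{\Span}\{\pi^{\cc}\iota(\lambda^{\cc})\widehat{\psi}\mid\lambda^{\cc}\in\Lambda^{\cc}\}, \]
the closure passing through the image because $\mc{F}$ is a homeomorphism. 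Second, from the compatibility $\mc{F}\circ\nabla_i\equiv\nabla_{i+1}\circ\mc{F}\pmod 2$ one computes
\[ \mc{F}(\overline{\nabla}\psi)=\mc{F}(\nabla_1\psi)+i\,\mc{F}(\nabla_2\psi)=\nabla_2\widehat{\psi}+i\,\nabla_1\widehat{\psi}=i\,(\nabla_1\widehat{\psi}-i\,\nabla_2\widehat{\psi})=i\,\nabla\widehat{\psi}, \]
so that $\mc{F}(\overline{\nabla}\psi)$ and $\nabla\widehat{\psi}$ differ only by the nonzero scalar $i$.

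With these two observations in place the argument is a short chase. Writing $W=\overline{\Span}\{\pi(\lambda^{\cc})\psi\mid\lambda^{\cc}\in\Lambda^{\cc}\}$ and $W^{\cc}=\overline{\Span}\{\pi^{\cc}\iota(\lambda^{\cc})\widehat{\psi}\mid\lambda^{\cc}\in\Lambda^{\cc}\}$, so that $\mc{F}(W)=W^{\cc}$, we get that $\overline{\nabla}\psi\in W$ if and only if $\mc{F}(\overline{\nabla}\psi)\in\mc{F}(W)=W^{\cc}$, if and only if $i\,\nabla\widehat{\psi}\in W^{\cc}$, if and only if $\nabla\widehat{\psi}\in W^{\cc}$, the last equivalence because $W^{\cc}$ is a linear subspace and hence stable under multiplication by $i$. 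This is precisely the asserted equivalence.

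The only points that would need a word of care, rather than a genuine obstacle, are the following: that $\overline{\nabla}\psi$ and $\nabla\widehat{\psi}$ indeed lie in $L^2(\Gamma)$ and $L^2(\widehat{\Gamma})$ (true since $\psi\in\mc{S}(\Gamma)$ and the covariant derivatives preserve the Schwartz space, so the displayed expressions all make sense inside $L^2$), and that $\mc{F}$ really does commute with the formation of $\overline{\Span}$, which is immediate from its unitarity. No deeper difficulty arises; the content of the proposition is entirely carried by the two intertwining identities recalled above.
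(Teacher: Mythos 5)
Your argument is correct and is exactly the intended one: the paper leaves this proposition unproved precisely because it follows from the two identities you cite, namely $\mc{F}(\pi(\lambda^{\cc})\psi)=\pi^{\cc}\iota(\lambda^{\cc})\widehat{\psi}$ (used in Proposition \ref{P:Riesz}) and $\mc{F}(\overline{\nabla}\psi)=i\,\nabla\widehat{\psi}$ from $\mc{F}\circ\nabla_i\equiv\nabla_{i+1}\circ\mc{F}$ (mod $2$), together with the fact that the unitary $\mc{F}$ carries closed linear spans to closed linear spans. Nothing is missing; the scalar $i$ is harmless since the spans are subspaces.
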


If the assumption for $\psi$ is satisfied, then we take $\xi =\overline{\nabla} \psi$ in (\ref{E:WRI}) and 
obtain that 
\[ \overline{\nabla}\psi= \psi \cdot \lb \psi \cdot \lb \psi, \psi \rb^{-1}, \overline{\nabla}\psi \rb_\mc{B} \]
and  by Proposition \ref{P:Riesz} and by Proposition \ref{P:Riesz2}

\[ \nabla \widehat{\psi}= \widehat{\psi} \cdot  \lb \widehat{\psi} \cdot \lb \widehat{\psi}, \widehat{\psi} \rb_{\mc{B}^{\cc}}^{-1}, \nabla \widehat{\psi} \rb_{\mc{B}^{\cc}}. \] Thus $\widehat{\psi}$ generates a projection that is a solution of anti-self duality equation in $\mc{A}^{\circ}$. We summarize what we have observed so far:
\begin{thm}
Let $\mc{A}_{\infty}$, $\mc{A}_{\infty}^{\cc}$  and $\mc{B}_{\infty}, \mc{B}^{\cc}_{\infty}$ as in Theorem \ref{T:Morita2} and Theorem \ref{T:Morita3}. Then $\lb \psi, \psi \rb_{\mc{B}}$ invertible and $\overline{\nabla} \psi$ belongs to $\overline{\Span}\{\pi(\lambda^{\cc}) \psi \mid \lambda^{\cc} \in \Lambda^{\cc}\} $ if and only if $\lb \widehat{\psi}, \widehat{\psi} \rb_{\mc{B}^{\cc}}$ invertible and  $\nabla (\widehat{\psi})$ belongs to $ \overline{\Span}\{\pi^{\cc}\iota(\lambda^{\cc}) \widehat{\psi} \mid \lambda^{\cc} \in \Lambda^{\cc}\}$.  
\end{thm}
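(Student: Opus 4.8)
The plan is to prove the final theorem by assembling the pieces already established, so that the entire argument reduces to transporting two facts across the Fourier transform $\mc{F}:\mc{S}(\Gamma)\to\mc{S}(\widehat{\Gamma})$, which is an isometric isomorphism intertwining $\pi$ with $\pi^{\cc}\iota$ (Theorem on the intertwining property) and $\pi(\lambda^{\cc})\psi$ with $\pi^{\cc}\iota(\lambda^{\cc})\widehat{\psi}$. First I would split the biconditional into its two natural halves: (i) invertibility of $\lb\psi,\psi\rb_{\mc{B}}$ in $\mc{B}$ if and only if invertibility of $\lb\widehat{\psi},\widehat{\psi}\rb_{\mc{B}^{\cc}}$ in $\mc{B}^{\cc}$, and (ii) the membership $\overline{\nabla}\psi\in\overline{\Span}\{\pi(\lambda^{\cc})\psi\}$ if and only if $\nabla\widehat{\psi}\in\overline{\Span}\{\pi^{\cc}\iota(\lambda^{\cc})\widehat{\psi}\}$. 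Part (ii) is precisely Proposition \ref{P:Riesz2}, which I may cite directly; so the real work is (i) together with the packaging.

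For the invertibility equivalence I would argue as follows. By the duality principle (the cited Theorem \ref{JL:Duality} 6.5, or rather its statement in the excerpt), $\lb\psi,\psi\rb_{\mc{B}}$ is invertible in $\mc{B}$ exactly when $\{\pi(\lambda^{\cc})\psi\mid\lambda^{\cc}\in\Lambda^{\cc}\}$ is a Riesz sequence in $\ell^2(\Lambda^{\cc})$; by Proposition \ref{P:Riesz} this happens if and only if $\{\pi^{\cc}\iota(\lambda^{\cc})\widehat{\psi}\mid\lambda^{\cc}\in\Lambda^{\cc}\}$ is a Riesz sequence, which, again by the duality principle applied now in the dual picture (this is the content of Theorem \ref{T:Morita3}, where $\mc{S}(\widehat{\Gamma})$ is the equivalence bimodule between $\mc{A}^{\cc}$ and $\mc{B}^{\cc}$ built from $\pi^{\cc}\iota$ in place of $\pi$), is equivalent to invertibility of $\lb\widehat{\psi},\widehat{\psi}\rb_{\mc{B}^{\cc}}$ in $\mc{B}^{\cc}$. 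One small point to check here is that the duality principle is symmetric enough to apply verbatim to $\pi^{\cc}\iota$; since $\pi^{\cc}\iota$ is itself a faithful square-integrable projective representation and the construction of $\mc{A}^{\cc},\mc{B}^{\cc}$ mirrors that of $\mc{A},\mc{B}$, this is immediate, but I would state it explicitly.

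Having both halves, I would finish by noting that the two conditions in the theorem's left side are jointly equivalent to the two on the right side, applying (i) and (ii) simultaneously. I would also remark — paralleling the end of the proof of Theorem \ref{T:ncsolitons} — that once the invertibility holds one may normalize $\psi$ to $\tilde\psi=\psi\cdot\lb\psi,\psi\rb_{\mc{B}}^{-1/2}$ via Proposition \ref{P:Standard}, and that the generalized-eigenvector relation \eqref{E:WRI} then transports across $\mc{F}$ using the bimodule identities $\mc{F}(_{\mc{A}}\lb\phi,\psi\rb\cdot\zeta)={}_{\mc{A}^{\cc}}\lb\widehat{\phi},\widehat{\psi}\rb\cdot\widehat{\zeta}$ and $\mc{F}\circ\overline{\nabla}=\nabla\circ\mc{F}$ (mod $2$), exactly as in the displayed computation in the proof of Theorem \ref{T:ncsolitons}; hence $\widehat{\psi}$ generates a projection solving the anti-self-duality equation in $\mc{A}^{\cc}$.

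The main obstacle I anticipate is not any hard estimate but the bookkeeping of which duality principle is being invoked in which of the two mirror settings, and making sure the covolume factor $1/s(\Lambda)$ that appears in $\lb\cdot,\cdot\rb_{\mc{B}}$ and $\lb\cdot,\cdot\rb_{\mc{B}^{\cc}}$ is treated identically on both sides — Proposition \ref{P:Riesz} already shows the relevant norm is preserved with the factor in place, so this is a matter of citing it carefully rather than reproving anything. The argument is therefore essentially a chain of equivalences, each link of which is an earlier result, and the write-up should make that chain transparent.
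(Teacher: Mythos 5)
Your proposal is correct and follows essentially the same route as the paper: the theorem there is explicitly a summary of the preceding chain — the duality principle giving invertibility of $\lb \psi,\psi\rb_{\mc{B}}$ iff the Riesz-sequence property, Proposition \ref{P:Riesz} transporting that property across $\mc{F}$ (with the duality principle read again in the dual picture built from $\pi^{\cc}\iota$), and Proposition \ref{P:Riesz2} handling the membership of $\overline{\nabla}\psi$ versus $\nabla\widehat{\psi}$. Your explicit remark that the duality principle applies verbatim to the unitarily equivalent representation $\pi^{\cc}\iota$ is a point the paper leaves implicit, but it is the same argument.
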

It is possible to obtain the solution of (\ref{E:eigen}) via other methods not using the duality principle. Thus the following is worth to observe.  
\begin{thm}\label{T:eigenvector}
Let $\psi \in \mc{S}(\Gamma)$. Then $\psi$ is the generalized eigenvector for $\overline{\nabla}$ if and only if $\widehat{\psi}$ is the generalized eigenvecor for $\nabla$. 
\end{thm}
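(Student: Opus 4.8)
The plan is to reduce the statement to the definition of a generalized eigenvector and then transport everything across the Fourier transform using the intertwining relation $\mc{F}\circ \nabla_i = \nabla_{i+1}\circ \mc{F}$ (mod $2$) together with the compatibility $\mc{F}(_{\mc{A}}\lb \xi,\eta\rb \cdot \zeta) = {}_{\mc{A}^{\cc}}\lb\widehat{\xi},\widehat{\eta}\rb\cdot\widehat{\zeta}$ and the corresponding relation for the right action proved in Theorem \ref{T:Morita3}. Recall that $\psi$ being a generalized eigenvector for $\overline{\nabla}=\nabla_1+i\nabla_2$ means there is $b\in\mc{B}$ with $\overline{\nabla}\psi = \psi\cdot b$, and likewise $\widehat{\psi}$ is a generalized eigenvector for $\nabla=\nabla_1-i\nabla_2$ means $\nabla\widehat{\psi}=\widehat{\psi}\cdot b'$ for some $b'\in\mc{B}^{\cc}$.

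First I would compute $\nabla\widehat{\psi}$ directly, exactly as in the displayed computation in the proof of Theorem \ref{T:ncsolitons}: using $\mc{F}\circ\nabla_1 = \nabla_2\circ\mc{F}$ and $\mc{F}\circ\nabla_2 = \nabla_1\circ\mc{F}$ (mod $2$) one gets
\[
\nabla\widehat{\psi} = \nabla_1(\mc{F}\psi) - i\nabla_2(\mc{F}\psi) = \mc{F}(\nabla_2\psi) - i\,\mc{F}(\nabla_1\psi) = -i\,\mc{F}\bigl((\nabla_1+i\nabla_2)\psi\bigr) = -i\,\mc{F}(\overline{\nabla}\psi).
\]
Then, if $\overline{\nabla}\psi = \psi\cdot b$ with $b\in\mc{B}$, the key point is that $b$ has the form $b = \lb\eta,\zeta\rb_{\mc{B}}$ for suitable $\eta,\zeta\in\mc{S}(\Gamma)$ (or a finite sum of such), since the right hermitian products span a dense subalgebra of $\mc{B}$; by continuity it suffices to handle that case and pass to limits. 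Applying $\mc{F}$ and using the associativity relation together with the two compatibility identities (Theorem \ref{T:Morita3}) yields
\[
-i\,\mc{F}(\psi\cdot\lb\eta,\zeta\rb_{\mc{B}}) = -i\,\mc{F}\bigl(_{\mc{A}}\lb\psi,\eta\rb\cdot\zeta\bigr) = -i\,{}_{\mc{A}^{\cc}}\lb\widehat{\psi},\widehat{\eta}\rb\cdot\widehat{\zeta} = -i\,\widehat{\psi}\cdot\lb\widehat{\eta},\widehat{\zeta}\rb_{\mc{B}^{\cc}},
\]
so $\nabla\widehat{\psi} = \widehat{\psi}\cdot b'$ with $b' = -i\lb\widehat{\eta},\widehat{\zeta}\rb_{\mc{B}^{\cc}}\in\mc{B}^{\cc}$, which is exactly the generalized eigenvector condition for $\nabla$ on $\widehat{\psi}$. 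For the converse, run the same argument with the roles of $\nabla$ and $\overline{\nabla}$ interchanged and with $\mc{F}^{-1}$ in place of $\mc{F}$ (note $\mc{F}^{-1}$ also intertwines $\nabla_i$ with $\nabla_{i+1}$ mod $2$, up to the obvious sign bookkeeping), recovering $\overline{\nabla}\psi = \psi\cdot b$ from $\nabla\widehat{\psi} = \widehat{\psi}\cdot b'$.

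The main obstacle, and the only step requiring care, is the passage from an abstract element $b\in\mc{B}$ to a (finite sum of) inner products $\lb\eta,\zeta\rb_{\mc{B}}$ and the accompanying limiting argument: one must check that the maps $b\mapsto \psi\cdot b$ and $b'\mapsto\widehat{\psi}\cdot b'$ are continuous in the relevant topologies (operator norm on $\mc{B},\mc{B}^{\cc}$ and the Hilbert-space norm on the modules) and that density of $\Span\{\lb\eta,\zeta\rb_{\mc{B}}\}$ in $\mc{B}$ — which follows from the equivalence-bimodule structure of Theorem \ref{T:Morita2} — transfers correctly under $\mc{F}$. Everything else is a formal diagram chase powered by the intertwining identities, so I expect the proof to be short once this density/continuity point is dispatched; indeed it is essentially the eigenvector half of Theorem \ref{T:ncsolitons} stripped of the normalization hypothesis $\lb\psi,\psi\rb_{\mc{B}}=1_{\mc{B}}$.
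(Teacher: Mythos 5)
Your reduction $\nabla\widehat{\psi}=-i\,\mc{F}(\overline{\nabla}\psi)$ and the identity $\mc{F}(\psi\cdot\lb\eta,\zeta\rb_{\mc{B}})=\widehat{\psi}\cdot\lb\widehat{\eta},\widehat{\zeta}\rb_{\mc{B}^{\cc}}$ for single inner products are fine, but the step you defer to the end is not a routine density/continuity check --- it is the crux, and your outline does not close it. If $\overline{\nabla}\psi=\psi\cdot b$ with $b$ only a norm limit of finite sums $b_n\in\Span\{\lb\eta,\zeta\rb_{\mc{B}}\}$, your argument yields $\mc{F}(\psi\cdot b)=\lim_n\widehat{\psi}\cdot b_n'$, where $b_n'$ is obtained by replacing each $\lb\eta,\zeta\rb_{\mc{B}}$ by $\lb\widehat{\eta},\widehat{\zeta}\rb_{\mc{B}^{\cc}}$. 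To conclude that this limit is of the form $\widehat{\psi}\cdot b'$ you need (i) that $b_n\mapsto b_n'$ is well defined on the span (a finite sum of inner products vanishing in $\mc{B}$ must have vanishing hatted counterpart in $\mc{B}^{\cc}$) and (ii) that this correspondence is bounded, so that $b_n'$ actually converges in $\mc{B}^{\cc}$. The continuity of $b\mapsto\psi\cdot b$ and $b'\mapsto\widehat{\psi}\cdot b'$, which is what you propose to verify, gives neither point, and $\widehat{\psi}\cdot\mc{B}^{\cc}$ is not obviously norm-closed, so without (i)--(ii) the element $b'$ may simply fail to exist in your scheme. In effect you are assuming the existence of an isomorphism $\mc{B}\to\mc{B}^{\cc}$ compatible with $\mc{F}$ and the right actions, which is exactly what has to be proved.

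The paper supplies precisely this missing lemma, and in a way that makes your density detour unnecessary: it defines $\mc{J}:\mc{B}\to\mc{B}^{\cc}$ on the twisted group algebra generators by $\mc{J}\bigl(\sum_{\lambda^{\cc}}b(\lambda^{\cc})\pi^*(\lambda^{\cc})\bigr)=\sum_{\lambda^{\cc}}b(\lambda^{\cc})(\pi^{\cc}\iota)^*(\lambda^{\cc})$. Since the Fourier transform unitarily intertwines $\pi$ with $\pi^{\cc}\iota$, hence $\pi^*(\lambda^{\cc})$ with $(\pi^{\cc}\iota)^*(\lambda^{\cc})$, this $\mc{J}$ is just conjugation by $\mc{F}$, so it is a well-defined isometric isomorphism, and a one-line termwise computation gives $\mc{F}(\psi\cdot b)=\widehat{\psi}\cdot\mc{J}(b)$ for \emph{every} $b\in\mc{B}$ --- no approximation by inner products is needed; moreover $\mc{J}(\lb\eta,\zeta\rb_{\mc{B}})=\lb\widehat{\eta},\widehat{\zeta}\rb_{\mc{B}^{\cc}}$ and $\mc{J}(\lb\psi,\psi\rb_{\mc{B}}^{-1})=\lb\widehat{\psi},\widehat{\psi}\rb_{\mc{B}^{\cc}}^{-1}$, which recovers your elementary-tensor identity as a special case. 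With this in hand, $\overline{\nabla}\psi=\psi\cdot b$ gives $\nabla\widehat{\psi}=-i\,\mc{F}(\psi\cdot b)=\widehat{\psi}\cdot(-i\,\mc{J}(b))$, and the converse follows by the same argument with $\mc{F}^{-1}$ and $\mc{J}^{-1}$ (the paper additionally rewrites the eigenvalue in the explicit form $b=\lb\psi\cdot\lb\psi,\psi\rb_{\mc{B}}^{-1},\overline{\nabla}\psi\rb_{\mc{B}}$ of (\ref{E:eigen})). I recommend replacing your density-and-limits step by this explicit isomorphism; otherwise your proposal remains incomplete at its only nontrivial point.
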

\begin{proof}
Define an isomorphism $\mc{J}$ from $\mc{B}$ to $\mc{B}^{\cc}$  by 
\[\mc{J}(\sum_{\lambda^{\cc}} b(\lambda^{\cc})\pi^*(\lambda^{\cc})))=\sum_{\lambda^{\cc}} b(\lambda^{\cc}) (\pi^{\circ}\iota)^*(\lambda^{\cc}).\]
Then it follows that $\mc{J}(\lb \psi, \psi \rb_{\mc{B}})= \lb \widehat {\psi}, \widehat{\psi}\rb_{\mc{B}^{\cc}}$
By viewing $\mc{B}$ and $\mc{B}^{\cc}$ as an element of $\End_\mc{A}(\mc{E})$ and $\End_{\mc{A}^{\cc}}(\mc{E})$, we know that $\mc{J}(\lb \psi, \psi \rb_\mc{B}^{-1})=\lb \widehat{\psi}, \widehat{\psi} \rb_{\mc{B}^{\cc}}^{-1} $. Moreover, we claim that for $b\in \mc{B}$
\[ \mc{F}(\psi \cdot b)=\widehat{\psi}\cdot \mc{J}(b).\]
Indeed, 
\[
\begin{split}
\mc{F}(\psi \cdot b )&=\mc{F}( \sum_{\lambda^{\cc}}b(\lambda^{\cc})\pi^*(\lambda^{\cc}) \psi)\\
&=\sum_{\lambda^{\cc}}b(\lambda^{\cc})\mc{F} (\pi^{*}(\lambda^{\cc}) \psi )\\
&=\sum_{\lambda^{\cc}}b(\lambda^{\cc}) (\pi^{\cc}\iota)^*(\lambda^{\cc})\widehat{\psi}\\
&=\widehat{\psi} \cdot \mc{J}(b).  
\end{split}
\]
Then 
\[
\begin{split}
\mc{F} (\overline{\nabla} \psi)&=\mc{F}(\psi \cdot  \lb \psi \cdot \lb \psi, \psi \rb_{\mc{B}}^{-1}, \overline{\nabla}\psi \rb_{\mc{B}})\\
\nabla (\widehat{\psi})&= \widehat{\psi} \cdot \lb \mc{F}( \psi \cdot \lb \psi, \psi \rb_{\mc{B}}^{-1}), \nabla\widehat{\psi} \rb_{\mc{B}^{\cc}}\\
&=\widehat{\psi} \cdot \lb \widehat{\psi} \cdot \mc{J}(\lb \psi, \psi \rb_{\mc{B}}^{-1}), \nabla\widehat{\psi} \rb_{\mc{B}^{\cc}}\\
&=\widehat{\psi} \cdot \lb \widehat{\psi} \cdot \lb \widehat{\psi}, \widehat{\psi} \rb_{\mc{B}^{\cc}}^{-1}), \nabla\widehat{\psi} \rb_{\mc{B}^{\cc}}.
\end{split}
\]
\end{proof}
Further, sometimes we need to compute $\Tr_{\mc{B}}(b)$ for $b \in \mc{B}$ such that $\overline{\nabla}(\psi)=\psi \cdot b$ (see \cite{Lee2:Sigma, P:Ex}). In fact, $b=\lb \psi, \overline{\nabla}\psi \rb_{\bullet}$ for a tight frame $\psi$. It follows that $ \Tr_{\mc{B}}(b) = s(\Lambda)\lb \overline{\nabla}\psi, \psi \rb_{L^2(\Gamma)}$. Similarly, $\Tr_{\mc{B}^{\cc}}(\mc{J}(b))=s(\Lambda)\lb \nabla \widehat{\psi}, \widehat{\psi}\rb_{L^2(\widehat{\Gamma})}$.

\begin{cor}
Now suppose that $\Gamma \cong \widehat{\Gamma}$ and  a tight frame $\psi= \widehat{\psi}$ is a generalized eigenvector for $\overline{\nabla}$ for some $b\in \mc{B}$. Then $\Tr_{\mc{B}}(b)=0$.   
\end{cor}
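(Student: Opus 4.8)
The plan is to show that $\lb\overline{\nabla}\psi,\psi\rb_{L^2(\Gamma)}$, to which $\Tr_{\mc{B}}(b)$ is proportional, must vanish. Since $\psi$ is a tight frame we have $b=\lb\psi,\overline{\nabla}\psi\rb_{\bullet}$, and the two trace identities recorded just before the statement read
\[ \Tr_{\mc{B}}(b)=s(\Lambda)\,\lb\overline{\nabla}\psi,\psi\rb_{L^2(\Gamma)},\qquad \Tr_{\mc{B}^{\cc}}(\mc{J}(b))=s(\Lambda)\,\lb\nabla\widehat{\psi},\widehat{\psi}\rb_{L^2(\widehat{\Gamma})}. \]
First I would observe that $\mc{J}$ is trace preserving: by its very definition $\mc{J}$ sends $\sum_{\lambda^{\cc}}b(\lambda^{\cc})\pi^{*}(\lambda^{\cc})$ to $\sum_{\lambda^{\cc}}b(\lambda^{\cc})(\pi^{\cc}\iota)^{*}(\lambda^{\cc})$, hence leaves the coefficient sequence $(b(\lambda^{\cc}))$ unchanged, whereas $\Tr_{\mc{B}}$ and $\Tr_{\mc{B}^{\cc}}$ are both $s(\Lambda)$ times the value of that sequence at $0\in\Lambda^{\cc}$. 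Thus $\Tr_{\mc{B}^{\cc}}(\mc{J}(b))=\Tr_{\mc{B}}(b)$, and comparing this with the two displayed formulas yields
\[ \lb\overline{\nabla}\psi,\psi\rb_{L^2(\Gamma)}=\lb\nabla\widehat{\psi},\widehat{\psi}\rb_{L^2(\widehat{\Gamma})}. \]

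Next I would use the standing assumptions $\Gamma\cong\widehat{\Gamma}$ and $\psi=\widehat{\psi}$. Fixing the identification $L^2(\Gamma)\cong L^2(\widehat{\Gamma})$ so that it carries $\nabla_1,\nabla_2$ (hence $\nabla,\overline{\nabla}$) to themselves, the relation $\psi=\widehat{\psi}$ becomes $\mc{F}\psi=\psi$ and the right-hand side of the last display becomes $\lb\nabla\psi,\psi\rb_{L^2(\Gamma)}$; subtracting, $0=\lb(\overline{\nabla}-\nabla)\psi,\psi\rb=2i\,\lb\nabla_2\psi,\psi\rb$, so $\lb\nabla_2\psi,\psi\rb=0$. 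To remove $\lb\nabla_1\psi,\psi\rb$ as well I would invoke the intertwining relation in the case $i=2$, i.e. $\mc{F}\circ\nabla_2=\nabla_1\circ\mc{F}$ (the index $3$ being read $\bmod\,2$): from $\mc{F}\psi=\psi$ we get $\nabla_1\psi=\nabla_1(\mc{F}\psi)=\mc{F}(\nabla_2\psi)$, so by the Plancherel theorem $\lb\nabla_1\psi,\psi\rb=\lb\mc{F}(\nabla_2\psi),\mc{F}\psi\rb=\lb\nabla_2\psi,\psi\rb=0$.

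Combining, $\lb\overline{\nabla}\psi,\psi\rb=\lb\nabla_1\psi,\psi\rb+i\,\lb\nabla_2\psi,\psi\rb=0$, and therefore $\Tr_{\mc{B}}(b)=s(\Lambda)\,\lb\overline{\nabla}\psi,\psi\rb=0$. The point that needs the most care is the first paragraph's conclusion $\lb\overline{\nabla}\psi,\psi\rb=\lb\nabla\widehat{\psi},\widehat{\psi}\rb$: one has to be sure that $\mc{J}$ intertwines $\Tr_{\mc{B}}$ and $\Tr_{\mc{B}^{\cc}}$ with the identical normalization $s(\Lambda)$ and that the chosen identification $\Gamma\cong\widehat{\Gamma}$ is compatible both with ``$\psi=\widehat{\psi}$'' and with the relations $\mc{F}\circ\nabla_i\equiv\nabla_{i+1}\circ\mc{F}$. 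Once this is in place the rest is the substitution $\widehat{\psi}=\psi$ (which gives $\lb\nabla_2\psi,\psi\rb=0$) together with the Plancherel theorem applied to $\mc{F}\circ\nabla_2=\nabla_1\circ\mc{F}$ (which then gives $\lb\nabla_1\psi,\psi\rb=0$).
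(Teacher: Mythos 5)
Your argument is correct and is essentially the paper's own proof: both derive $\lb\overline{\nabla}\psi,\psi\rb_{L^2(\Gamma)}=\lb\nabla\widehat{\psi},\widehat{\psi}\rb_{L^2(\widehat{\Gamma})}$ from the trace formulas together with the fact that $\mc{J}$ preserves the trace, use $\psi=\widehat{\psi}$ to get $\lb\nabla_2\psi,\psi\rb=0$, and then apply Plancherel with $\mc{F}\circ\nabla_2=\nabla_1\circ\mc{F}$ to kill $\lb\nabla_1\psi,\psi\rb$ as well. Your added justification that $\mc{J}$ is trace-preserving (it fixes the coefficient sequence, and both traces are the same multiple of the coefficient at $0$) only makes explicit a step the paper merely asserts.
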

\begin{proof}
Note that $\Tr_{\mc{B}}(b)=\Tr_{\mc{B}^{\cc}}(\mc{J}(b))$. Therefore, from Theorem \ref{T:eigenvector}
\[ \lb \nabla \widehat{\psi}, \widehat{\psi}\rb_{L^2(\widehat{\Gamma})}= \lb \overline{\nabla}\psi, \psi \rb_{L^2(\Gamma)}.\] 
Since $\widehat{\psi}=\psi$, \[ \lb \nabla \psi,\psi\rb_{L^2(\Gamma)}= \lb \overline{\nabla}\psi, \psi \rb_{L^2(\Gamma)}\] which implies that $\lb \nabla_2 \psi, \psi \rb_{L^2(\Gamma)}=0$. By  $\mc{F} \nabla_j \equiv \nabla_{j+1} \mc{F}$ mod $2$ for $j=1,2$.   
\[
\begin{split}
0&=\lb \nabla_2 \psi, \psi \rb_{L^2(\Gamma)}\\
&=\lb  \mc{F}(\nabla_2 \psi), \mc{F}(\psi) \rb_{L^2(\Gamma)}\\
&=\lb \nabla_1(\psi), \psi \rb_{L^2(\Gamma)}.
\end{split}
\]
\begin{rem}
The above statement is true for a general frame $\psi=\widehat{\psi}$. The proof is almost same with a careful touch.
\end{rem}
\end{proof}
Finally we prove that the topological charge of a soliton is also preserved under  Fourier transform up to sign change. We need to assume that our faithful traces $\Tr_{\mc{A}}$ and $\Tr_{\mc{B}}$ satisfy $\Tr_{\bullet}(\partial_j (\cdot))=0$. We recall that the topological charge of a projection $p$ in $\mc{A}$ is the first Connes-Chern number $c_1(p)$ given by
\[ c_1(p)=-\frac{1}{2\pi i} \Tr_{\mc{A}}(p(\partial_1 p \partial_2 p - \partial_2 p \partial_1 p)).\]

Then for the solitons $\psi $ such that $\lb \psi, \psi \rb_{\mc{B}}=1_{\mc{B}}$ we have 
\begin{equation}\label{E:charge}
c_1(p_{\psi})= -\frac{1}{2\pi i} \Tr_{\mc{B}} (\lb \psi, F_{12}\psi\rb_{\mc{B}})
\end{equation}
by \cite[Proposition 3.3]{DLL:Sigma}. Here $F_{12}:= \nabla_1\nabla_2- \nabla_2\nabla_1$ is the curvature of the covariant derivatives. 

\begin{prop}
For solitons $\psi \in \mc{S}(\Gamma)$ such that $\lb \psi, \psi \rb_{\mc{B}}=1_{\mc{B}}$, 
$c_i(p_{\psi})= -c_1(p_{\widehat{\psi}})$.  
\end{prop}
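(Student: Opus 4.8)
The plan is to reduce the claim to formula (\ref{E:charge}) and then to control the single sign that appears when the curvature $F_{12}$ and the $\mc{B}$-valued hermitian form are transported across $\mc{F}$. First I would note that, since $\lb \psi,\psi\rb_{\mc{B}}=1_{\mc{B}}$, Theorem \ref{T:ncsolitons} already supplies $\lb \widehat{\psi},\widehat{\psi}\rb_{\mc{B}^{\cc}}=1_{\mc{B}^{\cc}}$, so (\ref{E:charge}) is available for both projections:
\[ c_1(p_{\psi})=-\frac{1}{2\pi i}\Tr_{\mc{B}}(\lb \psi, F_{12}\psi\rb_{\mc{B}}),\qquad c_1(p_{\widehat{\psi}})=-\frac{1}{2\pi i}\Tr_{\mc{B}^{\cc}}(\lb \widehat{\psi}, F_{12}\widehat{\psi}\rb_{\mc{B}^{\cc}}). \]
Hence it suffices to compare $\Tr_{\mc{B}^{\cc}}(\lb \widehat{\psi}, F_{12}\widehat{\psi}\rb_{\mc{B}^{\cc}})$ with $\Tr_{\mc{B}}(\lb \psi, F_{12}\psi\rb_{\mc{B}})$.

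Next I would apply the relation $\mc{F}\circ\nabla_j\equiv\nabla_{j+1}\circ\mc{F}$ (mod $2$) twice, exactly as it is used in the proofs of Theorem \ref{T:ncsolitons} and Theorem \ref{T:eigenvector}: from $\mc{F}(\nabla_1\nabla_2\psi)=\nabla_2\nabla_1\widehat{\psi}$ and $\mc{F}(\nabla_2\nabla_1\psi)=\nabla_1\nabla_2\widehat{\psi}$ one gets $\mc{F}F_{12}=-F_{12}\mc{F}$ on $\mc{S}(\Gamma)$, i.e. $F_{12}\widehat{\psi}=-\widehat{F_{12}\psi}$. At this point I would also record the standing hypothesis that the connection, and hence the curvature $F_{12}$, preserves the Schwartz space, so that $\lb \psi,F_{12}\psi\rb_{\mc{B}}$ and $\lb \widehat{\psi},F_{12}\widehat{\psi}\rb_{\mc{B}^{\cc}}$ are bona fide elements of $\mc{B}$ and $\mc{B}^{\cc}$.

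Then I would bring in the isomorphism $\mc{J}:\mc{B}\to\mc{B}^{\cc}$ from the proof of Theorem \ref{T:eigenvector}. The computation given there that yields $\mc{J}(\lb \psi,\psi\rb_{\mc{B}})=\lb \widehat{\psi},\widehat{\psi}\rb_{\mc{B}^{\cc}}$ uses only $\pi^{\cc}\iota(\lambda^{\cc})\widehat{\phi}=\mc{F}(\pi(\lambda^{\cc})\phi)$ and the unitarity of $\mc{F}$ and is separately linear in each argument, so it extends verbatim to $\mc{J}(\lb \psi,\phi\rb_{\mc{B}})=\lb \widehat{\psi},\widehat{\phi}\rb_{\mc{B}^{\cc}}$ for every $\phi\in\mc{S}(\Gamma)$, in particular for $\phi=F_{12}\psi$. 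Since $\mc{J}$ fixes the $0$-th coefficient, $\Tr_{\mc{B}}=\Tr_{\mc{B}^{\cc}}\circ\mc{J}$ (already noted in the corollary above). Combining these with the previous step,
\[ \Tr_{\mc{B}^{\cc}}(\lb \widehat{\psi}, F_{12}\widehat{\psi}\rb_{\mc{B}^{\cc}})=-\Tr_{\mc{B}^{\cc}}(\lb \widehat{\psi}, \widehat{F_{12}\psi}\rb_{\mc{B}^{\cc}})=-\Tr_{\mc{B}^{\cc}}(\mc{J}(\lb \psi, F_{12}\psi\rb_{\mc{B}}))=-\Tr_{\mc{B}}(\lb \psi, F_{12}\psi\rb_{\mc{B}}), \]
and substituting into the two instances of (\ref{E:charge}) yields $c_1(p_{\widehat{\psi}})=-c_1(p_{\psi})$.

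The two applications of $\mc{F}\nabla_j=\nabla_{j+1}\mc{F}$ and the unwinding of the definitions of $\lb\cdot,\cdot\rb_{\mc{B}}$, $\lb\cdot,\cdot\rb_{\mc{B}^{\cc}}$, $\Tr_{\mc{B}}$, $\Tr_{\mc{B}^{\cc}}$ are routine. The delicate point — the part I expect to be the main obstacle — is the bookkeeping around $\mc{J}$: one must confirm that $\mc{J}$ is precisely the $*$-isomorphism intertwining the two $\mc{B}$-valued forms on all of $\mc{S}(\Gamma)\times\mc{S}(\Gamma)$ (and not merely on the diagonal pair $(\psi,\psi)$ appearing in Theorem \ref{T:eigenvector}), that it carries $\Tr_{\mc{B}}$ to $\Tr_{\mc{B}^{\cc}}$, and that $F_{12}\psi$ indeed lies in the smooth domain on which (\ref{E:charge}) was established in \cite{DLL:Sigma}. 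Once these are in place, the lone sign coming from $\mc{F}F_{12}=-F_{12}\mc{F}$ forces the first Connes--Chern number to change sign.
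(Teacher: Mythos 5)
Your proposal is correct and takes essentially the same route as the paper: both arguments rest on the sign relation $F_{12}\widehat{\psi}=-\mc{F}(F_{12}\psi)$ coming from $\mc{F}\circ\nabla_j\equiv\nabla_{j+1}\circ\mc{F}$ (mod $2$), applied inside formula (\ref{E:charge}) for $p_{\psi}$ and $p_{\widehat{\psi}}$. The only difference is bookkeeping: the paper transfers $\Tr_{\mc{B}^{\cc}}\bigl(\lb \widehat{\psi}, \mc{F}(F_{12}\psi)\rb_{\mc{B}^{\cc}}\bigr)$ back to $\Tr_{\mc{B}}\bigl(\lb \psi, F_{12}\psi\rb_{\mc{B}}\bigr)$ by evaluating both traces as $L^2$ scalar products and using Plancherel, while you route the same transfer through the isomorphism $\mc{J}$, whose compatibility with the hermitian products and traces is itself just Plancherel on the coefficients, so the two arguments coincide in substance.
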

\begin{proof}
In view of (\ref{E:charge}) we need to look at the term 
\[ \begin{split}
F_{12} \widehat{\psi} &= \nabla_1\nabla_2 \widehat{\psi} -\nabla_2\nabla_2 \widehat{\psi}\\
&=\nabla_1 (\mc{F}(\nabla_1\psi))-\nabla_2 (\mc{F}(\nabla_2 \psi))\\
&=\mc{F}(\nabla_2 \nabla_2 \psi - \nabla_1\nabla_2 \psi)=-\mc{F}(F_{12} \psi).
\end{split}
 \]
Thus, \[
\begin{split}
c_1(p_{\widehat{\psi}})&= + \frac{1}{2\pi i}\Tr_\mc{B} (\lb \mc{F}(\psi)), \mc{F}(F_{12} \psi)\rb_{\mc{B}^{\cc}})\\
 &=  \frac{1}{2\pi i} \lb \mc{F}(\psi),  \mc{F}(F_{12} \psi)\rb_{L^2(\widehat{\Gamma})}=\frac{1}{2\pi i} \lb \psi, F_{12}\psi \rb_{L^2(\Gamma)}\\
 &=\frac{1}{2\pi i}\Tr_{\mc{B}}(\lb \psi, F_{12} \psi \rb_{\mc{B}})\\
 &=-c_1(p_{\psi}). 
\end{split} 
\] 
\end{proof}

\begin{bibdiv}

\begin{biblist}
\bib{CO:NCG}{article}{
   author={Connes, A.},
   title={$C\sp*$-alg\`{e}bres et g\'{e}ometrie diff\'{e}rentille},
   journal={C.R. Acad. Sci. Paris S\'{e}r. A},
   volume={290},
   number={13}
   date={1980},
   pages={599--604},
   issn={},
   review={MR1690050(81c:46053)},
   doi={},
}
\bib{B:QT}{article}{
   author={Boca, F.},
   title={Projections in rotation algebras and theta functions},
   journal={Comm. Math. Phys.},
   volume={202},
   date={1999},
   number={2}
   pages={325--357},
   issn={},
   review={MR1690050(2000j:46101)},
   doi={},
}
\bib{CR:YM}{article}{
   author={Connes, A.},
   author={Rieffel, M.}
   title={ Yang-Mills for noncommutative two-tori},
   journal={Contemp. Math.},
   volume={62},
   date={1987},
   pages={335--348},
   issn={0075-4102},
   review={\MR{454645 (56\#:12894)}},
   doi={},
}

\bib{DKL:Sigma}{article}{
   author={Dabrowski, L.},
   author={Krajewski, T.},
   author={Landi, G.},
   title={Some properties of Non-linear $\sigma$-models in noncommutative geometry},
   journal={Int. J. Mod. Phys.},
   volume={B14},
   date={2000},
   pages={2367--2382},
   review={\MR{0470685 (57 \#10431)}},
}

\bib{DKL:Sigma2}{article}{
   author={Dabrowski, L.},
   author={Krajewski, T.},
   author={Landi, G.},
   title={Non-linear $\sigma$-models in noncommutative geometry: fields with values in finite spaces},
   journal={Mod. Physics Lett. A},
   volume={18},
   date={2003},
   pages={2371--2379},
   review={},
}

\bib{DLL:Sigma}{article}{
   author={Dabrowski, L},
   author={Landi, G.},
   author={Luef, F}
   title={Sigma-model solitons on noncommutative spaces},
   journal={Lett. Math. Phys.},
   volume={105},
   date={2015},
   number={12},
   pages={1633--1688},
   issn={},
   review={\MR{3420593}},
   doi={10.1007/s11005-015-0790-x}
}

\bib{DJLL:Sigma}{article}{
  author={Dabrowski, L.},
  author={Jakobsen, M.},
  author={Landi, G.},
  author={Luef, F.} 
  title={Solitons of general topological charge over noncommutative tori}, 
  journal={arXiv:1801.08596},
   volume={},
  date={},  
number={}, 
pages={}, 
review={}
}

\bib{FS}{book}{
author={Feichtinger, H},
author={Strohmer},
title={Gabor analysis and Algorithms},
publisher={Springer Science+Business Media, LLC}
date={1998},
}
\bib{FL:Frame}{article}{
   author={Frank, M.},
   author={Larson, D.},
   title={Frames in Hilbert $C\sp*$-modules and $C\sp*$-algebras},
   journal={J. Operator Theory},
   volume={},
   date={2002},
   number={},
   pages={273--314},
   issn={},
   doi={},
}
\bib{GL:Gabor}{article}{
   author={Gr\"{o}chenig, K.},
   author={Lyubarskii, Y.}
   title={Gabor(super) frames with Hermite functions},
   journal={Math. Ann.},
   volume={345},
   date={2009},
   number={},
   pages={267--286},
   issn={},
   review={},
   doi={},
}
\bib{GS:Gabor}{article}{
   author={Gr\"{o}chenig, K.},
   author={St\"{o}ckler, J}
   title={Gabor frames and totally positive functions},
   journal={Duke  Math. J},
   volume={162},
   date={2013},
   number={5},
   pages={1003--1031},
   issn={},
   review={},
   doi={},
}

\bib{Howe}{article}
{author={Howe, R.}
title={On the role of the Heisenberg group in harmonic analysis},
 journal={Bull. Amer. Math. Soc.}
volume={3}, 
date={1980},
number={2},
pages={821--843},
 }

\bib{J:Segal}{article}
{author={Jacobsen, M.}
title={On a New Segal Algebra:A Review of the Feichtinger Algebra},
journal={J. Fourier Anal. Appl.}
volume={24},
date={2018},
pages={1579--1660}
}

\bib{JL:Duality}{article}
{author={Jakobsen, M.}
 author={Lemvig, J.}
title={Density and duality theorems for regular Gabor frames},
 journal={ J. Funct. Anal.}, 
 volume={270}
 date={2016}
 number={1}
 pages={229--263},
 }

\bib{Lee:Sigma}{article}{
   author={Lee, H.},
   title={A note on nonlinear $\sigma$-models in noncommutative geometry},
   journal={IDAQP},
   volume={19},
   date={2016},
   number={1},
   pages={},
   issn={0022-1236},
   review={\MR{2733573 (2011k:46079)}},
   doi={10.1142/S0239025716500065},
}
\bib{Lee2:Sigma}{article}{
author={Lee, H.},
   title={On a gauge action on sigma model solitons},
   journal={IDAQP},
   volume={21},
   date={2018},
   number={2},
   pages={},
   issn={0022-1236},
   review={},
   doi={10.1142/S023902571850008X},
}
\bib{L:Gabor}{article}{
   author={Luef, F.},
   title={Projections in noncommutative tori and Gabor frames},
   journal={Proc. A.M.S.},
   volume={139},
   date={2010},
   number={2},
   pages={571--582},
   issn={0002-9939},
   review={},
   doi={},
}
\bib{L:VBoverNT}{article}{
   author={Luef, F.},
   title={Projective modules over noncommutative tori are multi-window Gabor frames for modulation spaces},
   journal={J. Funct. Anal.},
   volume={257},
   date={2009},
   number={6},
   pages={1921--1946},
   issn={0379-4024},
   review={\MR{2540994}},
}
\bib{MR:Sigma}{article}{
   author={Mathai, V.},
  author={Rosenberg, J.}
   title={A noncommutative sigma-model},
   journal={J. Noncommut. Geom.},
   volume={5},
   date={2011},
   number={},
   pages={265--294},
   issn={},
   review={},
   doi={},
}
\bib{P:Ex}{article}{
   author={Polishchuck, P.},
   title={Analogues of the exponential map associated with complex structures on noncommutative two-tori},
   journal={Pacific J. Math.},
   volume={226},
   date={2006},
   number={1},
   pages={153--178},
   issn={},
   review={},
   doi={},
}
\bib{R:Morita}{article}{
   author={Rieffel, M.},
   title={Projective modules over higher-dimensional non-commutative tori },
   journal={Canad. J. Math.},
   volume={XL},
   date={1988},
   number={2},
   pages={257--338},
   review={},
   doi={},
   }

\end{biblist}

\end{bibdiv}

\end{document}